\newcommand\shorttitle{On a distinguished family of random variables and Painlev\'e equations}
\newcommand\authors{T. Assiotis, B. Bedert, M. A. Gunes and A. Soor}
\ifodd\value{page}
\authors
\shorttitle
\newtheorem{thm}{Theorem}[section]
\newtheorem{cor}[thm]{Corollary}
\newtheorem{defn}[thm]{Definition}
\newtheorem{rmk}[thm]{Remark}
\newtheorem{prop}[thm]{Proposition}
\newtheorem{conjecture}[thm]{Conjecture}
\newtheorem*{claim*}{Claim}
\DeclareMathOperator{\sgn}{sgn}
\title{\large \bf On a Distinguished Family of Random Variables and Painlev\'e Equations}
\author{\small THEODOROS ASSIOTIS, BENJAMIN BEDERT, MUSTAFA ALPER GUNES AND ARUN SOOR}
\date{}
\begin{document}

\maketitle
\begin{abstract}
A family of random variables $\mathbf{X}(s)$, depending on a real parameter $s>-\frac{1}{2}$, appears in the asymptotics of the joint moments of characteristic polynomials of random unitary matrices and their derivatives \cite{assiotispainleve}, in the ergodic decomposition of the Hua-Pickrell measures \cite{BorodinOlshanskiErgodic}, \cite{Qiu} and conjecturally in the asymptotics of the joint moments of Hardy's function and its derivative \cite{Zeta4}, \cite{assiotispainleve}. Our first main result establishes a connection between the characteristic function of $\mathbf{X}(s)$ and the $\sigma$-Painlev\'e III' equation in the full range of parameter values $s>-\frac{1}{2}$. Our second main result gives the first explicit expression for the density and all the complex moments of the absolute value of $\mathbf{X}(s)$ for integer values of $s$. Finally, we establish an analogous connection to another special case of the $\sigma$-Painlev\'e III' equation for the Laplace transform of the sum of the inverse points of the Bessel point process.
\end{abstract}

\tableofcontents

\section{Introduction}
\subsection{Motivation}
We first give the precise definition of the random variables $\mathbf{X}(s)$, with $s>-\frac{1}{2}$, as principal value sums of certain determinantal point processes. We then elaborate on three distinct reasons why someone would be interested in them.
\begin{defn}
Let $s\in \mathbb{R}$ and $s>-\frac{1}{2}$. Let $\mathbf{C}^{(s)}$ be the determinantal point process\footnote{By a determinantal point process $\mathfrak{X}$ on $E\subseteq \mathbb{R}$ with correlation kernel $\mathfrak{L}:E\times E \to \mathbb{C}$ we mean a random point process $\mathfrak{X}$ on $E$, or equivalently a probability measure $\mathbb{P}^{\mathfrak{X}}$ on the space of locally finite point configurations $\mathsf{Conf}(E)$ on $E$, satisfying for all $n\in \mathbb{N}$ and any compactly supported bounded Borel function $\mathsf{F}$ on $E^n$:
\begin{equation*}
\int_{\mathsf{Conf}(E)}\sum_{x_{i_1},\dots,x_{i_n}\in \mathcal{X}}\mathsf{F}(x_{i_1},\dots,x_{i_n})\mathbb{P}^{\mathfrak{X}}(d\mathcal{X})=\int_{E^n}\mathsf{F}(y_1,\dots,y_n)\det\left[\mathfrak{L}(y_i,y_j)\right]_{i,j=1}^{n} dy_1 \cdots dy_n,
\end{equation*}where the sum is over all $n$-tuples of pairwise distinct points of the point configuration $\mathcal{X}\in \mathsf{Conf}(E)$, see \cite{Borodin}.} on $\mathbb{R}^*=(-\infty,0) \cup (0,\infty)$ with correlation kernel:
\begin{align*}
K^{(s)}(x,y) = \frac{1}{2\pi}\frac{\left(\Gamma(s+1)\right)^2}{\Gamma(2s+1)\Gamma(2s+2)}\frac{T^{(s)}(x)R^{(s)}(y)-T^{(s)}(y)R^{(s)}(x)}{x-y},
\end{align*}
where $T^{(s)}(x), R^{(s)}(x)$ are given by the formulas 
\begin{equation*}
T^{(s)}(x) = 2^{2s-\frac{1}{2}}\Gamma\left(s+\frac{1}{2}\right) \cdot \frac{1}{|x|^{\frac{1}{2}}}J_{s-1/2}\left(\frac{1}{|x|}\right),
\end{equation*}
\begin{equation*}
R^{(s)}(x) = 2^{2s+\frac{1}{2}}\Gamma\left(s+\frac{3}{2}\right) \cdot \frac{1}{|x|^{\frac{1}{2}}} J_{s+1/2}\left(\frac{1}{|x|}\right),
\end{equation*}
where $J_{\nu}$ denotes the Bessel function with parameter $\nu$. 
Then, $\mathbf{X}(s)$ is defined to be the following principal value sum of the points of $\mathbf{C}^{(s)}$, shown to be well-defined by the results of Qiu \cite{Qiu}:
\begin{equation}
    \mathbf{X}(s)=\lim\limits_{m\to\infty}\left[\sum_{x\in \mathbf{C}^{(s)}}x\mathbb{1}\left(|x|>\frac{1}{m^2}\right)\right].
    \label{eq:Xsdefinition}
\end{equation}

\end{defn}

\paragraph{Joint moments of characteristic polynomials of random unitary matrices.} Let $\mathbf{U}\in \mathbb{U}(N)$ be a Haar-distributed random matrix, where $\mathbb{U}(N)$ is the group of $N\times N$ unitary matrices, and let  $e^{i\theta_1}, \ldots,e^{i\theta_N}$ denote its eigenvalues.

Define the characteristic polynomial of $\mathbf{U}$:
\begin{equation}
    \mathbf{S}_{\mathbf{U}}(\theta)=\det\left(I-e^{-i\theta}\mathbf{U}\right)
\end{equation}
and consider:
\begin{equation}
\mathbf{G}_{\mathbf{U}}(\theta)=e^{\frac{iN}{2}(\theta+\pi)-i\sum_{k=1}^N\frac{\theta_k}{2}}\mathbf{S}_{\mathbf{U}}(\theta),
\end{equation}
so that $|\mathbf{S}_{\mathbf{U}}(\theta)|=|\mathbf{G}_{\mathbf{U}}(\theta)|$ and $\mathbf{G}_{\mathbf{U}}(\theta)$ is real-valued for $\theta \in [0,2\pi)$.

Let $s\in \mathbb{R}$ and $s>-\frac{1}{2}$. Then we define, for $-\frac{1}{2}<h<s+\frac{1}{2}$, the following quantities, that we call the joint moments:
\begin{equation}
    \mathcal{R}_N(s,h)=\int_{\mathbb{U}(N)} \left|\mathbf{G}_{\mathbf{U}}(0)\right|^{2s-2h} \left|\frac{d\mathbf{G}_{\mathbf{U}}}{d\theta}\Bigr\rvert_{\theta = 0}\right|^{2h} d\mu_N(\mathbf{U}),
    \label{eq:jointmoments}
\end{equation}
where $d\mu_N(\mathbf{U})$ is the Haar probability measure on the group of unitary matrices $\mathbb{U}(N)$. 

Hughes, in his thesis \cite{Zeta4} from 2001, partly motivated by connections with number theory that we will say more about below, made the following conjecture about the asymptotics of the joint moments:
\begin{equation}
      \frac{\mathcal{R}_N(s,h)}{N^{s^2+2h}} \overset{?}{\underset{N\to \infty}{\longrightarrow}} \mathcal{R}(s,h), \label{eq:Rfunction}
\end{equation}
for some unidentified (for generic real values of the exponents) quantity $\mathcal{R}(s,h)$. The conjecture was proven for $s\in \mathbb{N}$ \footnote{In this paper we use the convention $0 \notin \mathbb{N}$.} and $h \in \mathbb{N}$ or $h\in \mathbb{N}-\frac{1}{2}$ in a number of works, using a variety of methods, and different expressions for $\mathcal{R}(s,h)$ (for integer or half-integer parameters\footnote{Some of these expressions for $\mathcal{R}(s,h)$ made sense for non-integer values of $s$ as well. However, all of these formulae required the parameter $h$ to be an integer or a half-integer in order to make sense.}) were obtained, see  \cite{Bailey}, \cite{Basor0}, \cite{Conrey}, \cite{Dehaye2}, \cite{combinatorial1}, \cite{Zeta4}, \cite{Winn} for more details.

Recently, by employing a more probabilistic approach in \cite{assiotispainleve} the conjecture was proven for general real values of the exponents $s>-\frac{1}{2}$ and $0\le h <s+\frac{1}{2}$. For these parameter values the main result of \cite{assiotispainleve} reads as follows:
\begin{equation}
\lim\limits_{N\to \infty}\frac{\mathcal{R}_N(s,h)}{N^{s^2+2h}}=\mathcal{R}(s,h) = \frac{G(s+1)^2}{G(2s+1)}2^{-2h}\mathbb{E}\left(\left|\mathbf{X}(s)\right|^{2h}\right). \label{eq:AKWformula}
\end{equation}
Here, $G$ denotes the Barnes G-function, given by
\begin{equation}
    G(1+z) = (2\pi)^\frac{z}{2}\exp\left(-\frac{z+z^2(1+\gamma)}{2}\right)\prod_{j=0}^{\infty}\left(1+\frac{z}{j}\right)^j\exp\left(\frac{z^2}{2j}-z\right)
\end{equation}
where $\gamma$ is the Euler-Mascheroni constant. Our results below will lead to the first explicit evaluation of $\mathcal{R}(s,h)$ for generic real values of $h$ when the parameter $s$ is an integer.

\paragraph{Joint moments of Hardy's function.} As mentioned above, part of the motivation of Hughes in studying the asymptotics of the joint moments $\mathcal{R}_N(s,h)$ was to obtain a precise conjecture for the asymptotics of the joint moments of Hardy's function $ \mathcal{Z} $, defined as follows: 
\begin{equation}
    \mathcal{Z}(y) = \pi^{-iy/2} \frac{\Gamma(1/4 + iy/2)}{|\Gamma(1/4+iy/2)|}\zeta(1/2+iy),
    \label{eq:hardy}
\end{equation}
where $\zeta$ denotes the Riemann zeta function.

Building on the seminal work of Keating and Snaith \cite{Zeta1},\cite{Zeta2}, where the connection between moments of the Riemann zeta function and moments of characteristic polynomials was first understood, Hughes in \cite{Zeta4} conjectured the following, see also \cite{Zeta3}:
\begin{equation}\label{eq:HughesConj}
 \frac{1}{x}\int_0^{x}|\mathcal{Z}(y)|^{2s-2h}\left|\frac{d\mathcal{Z}}{dy}\right|^{2h}dy \sim a(s) \mathcal{R}(s,h) (\log(x))^{s^2+2h},
\end{equation}
as $x \to \infty$\footnote{Here we use the conventional notation $f(x) \sim g(x)$ to denote asymptotic equivalence, i.e. that $\frac{f(x)}{g(x)} \to 1$ as $x \to \infty$.} where the arithmetic factor $a(s)$ is given by:
\begin{equation}
a(s) := \prod_{\text{primes} \ p} \left(1-p^{-1}\right)^{s^2}\sum_{k=0}^{\infty} p^{-k}\left(\frac{\Gamma(k+s)}{\Gamma(k+1)\Gamma(s)}\right)^2. \label{eq:arithmetic}
\end{equation}

The conjecture agrees with rigorous results of Hardy and Littlewood \cite{HardyLittlewood} for $s=1, h=0$, Ingham \cite{Ingham} for $s=2, h=0$ and $s=1, h=1$, Conrey \cite{Conrey2} for $s=2, h=1$ and $s=2, h=2$ and Conrey and Ghosh \cite{Zeta5} for $s=1,h=\frac{1}{2}$. Hughes also stated an analogous conjecture for the joint moments of the Riemann zeta function itself and showed \cite[\S 6.3.]{Zeta4} that for $s,h\in \mathbb{N}$ the two conjectures are equivalent; in particular it is possible to obtain one leading order coefficient from the other, see \cite[6.105]{Zeta4}.

The formula \eqref{eq:AKWformula}, expressing $\mathcal{R}(s,h)$ in terms of the moments of $\mathbf{X}(s)$, thus leads to the following refinement of \eqref{eq:HughesConj}, as given in \cite{assiotispainleve}:
\begin{equation}
\frac{1}{x}\int_0^{x}|\mathcal{Z}(y)|^{2s-2h}\left|\frac{d\mathcal{Z}}{dy}\right|^{2h}dy \sim a(s) \frac{G(s+1)^2}{G(2s+1)} 2^{-2h} \mathbb{E}\left(|\mathbf{X}(s)|^{2h}\right)(\log(x))^{s^2+2h},
\label{eq:conjassiotis}
\end{equation}
valid for $s > - \frac{1}{2}$ and $h \in [0, s+ \frac{1}{2})$. The results of this paper lead to a further refinement of the conjecture above by explicitly evaluating the right-hand side of \eqref{eq:conjassiotis} when the parameter $s$ is an integer.

\paragraph{Ergodic decomposition of the Hua-Pickrell measures.} Lastly, the random variable $\mathbf{X}(s)$ arises naturally in the problem of ergodic decomposition of the Hua-Pickrell measures on the space infinite Hermitian matrices $\mathbb{H}(\infty)$\footnote{This is defined as the projective limit of the spaces of finite Hermitian matrices $\mathbb{H}(N)$ under the natural projections $\mathbb{H}(N+1) \twoheadrightarrow \mathbb{H}(N)$ given by restriction to the top left $N\times N$ submatrix.}. Indeed, this was the first setting \cite{BorodinOlshanskiErgodic}, \cite{Qiu} in which $\mathbf{X}(s)$ appeared. 

The classification of the ergodic measures invariant under the action of $\mathbb{U}(\infty)$\footnote{This is defined as the inductive limit of the finite unitary groups $\mathbb{U}(N)$ under the natural inclusions $\mathbb{U}(N) \hookrightarrow \mathbb{U}(N+1) : \mathbf{U} \mapsto \text{diag}(\mathbf{U},1)$.} by conjugation was derived by Pickrell \cite{Pickrell} and Olshanski and Vershik \cite{OlshanskiVershik}. These measures can be parametrized by the following space of real parameters $\Omega\subset \mathbb{R}^{2\infty+2}$:
\begin{multline}
    \Omega := \big\{\omega= \left(\{\alpha^+_n\}_{n\in \mathbb{N}}, \{\alpha^-_n\}_{n \in \mathbb{N}}, \gamma_1, \gamma_2\right)\in \mathbb{R}^{2\infty+2} : \gamma_2 \ge 0, \ \alpha^{\pm}_n \ge 0 \ \text{and} \ \alpha^{\pm}_n \ge \alpha^{\pm}_{n+1} \ \text{for all} \ n\in \mathbb{N} \\
    \text{and} \sum_{n=1}^{\infty} (\alpha_n^{\pm})^2 < \infty \big\}.
\end{multline}
The parameters of a point $\omega= \left(\{\alpha^+_n\}_{n\in \mathbb{N}}, \{\alpha^-_n\}_{n \in \mathbb{N}}, \gamma_1, \gamma_2\right)\in \Omega$ do have a concrete meaning\footnote{Informally, for $\omega= \left(\{\alpha^+_n\}_{n\in \mathbb{N}}, \{\alpha^-_n\}_{n \in \mathbb{N}}, \gamma_1, \gamma_2\right)\in \Omega$ the $\alpha$ parameters are asymptotic (as $N\to \infty$) normalised eigenvalues, $\gamma_1$ is the asymptotic normalised trace and $\gamma_2$ is closely related to the asymptotic normalised sum of squares of eigenvalues of the $N\times N$ top left submatrix of a random $M_{\omega}$-distributed matrix on $\mathbb{H}(\infty)$ (where $M_{\omega}$ is the ergodic measure parametrised by $\omega\in \Omega$). All these limits exist $M_{\omega}$-almost surely and are deterministic and coincide with the numerical values of the parameters of $\omega$, see \cite{OlshanskiVershik},  \cite{BorodinOlshanskiErgodic} for more details and proofs.}, as explained in the work of Olshanski and Vershik, see \cite{OlshanskiVershik},  \cite{BorodinOlshanskiErgodic} for more details. Moreover, explicit expressions for the characteristic functions for these ergodic measures $M_\omega$, $\omega \in \Omega$, are also known (which uniquely determine them), see for example \cite{OlshanskiVershik} and \cite{BorodinOlshanskiErgodic}.

A distinguished family of probability measures on $\mathbb{H}(\infty)$ were constructed by Borodin and Olshanski in \cite{BorodinOlshanskiErgodic}, depending on a parameter $s \in (-\frac{1}{2}, \infty)$, with the property that when projected onto the top left $N\times N$ submatrix, the classical generalised Cauchy, also known as Hua-Pickrell, ensemble is recovered\footnote{See \cite{HuaPickrell1}, \cite{HuaPickrell2}, \cite{HuaPickrell3}, \cite{HuaPickrell4}, \cite{Qiu}, \cite{BorodinOlshanskiErgodic} and \cite{HuaPickrell5} for information on this ensemble and the related Hua-Pickrell measures; see also \cite{HuaPickrell6}, \cite{HuaPickrell7}, \cite{HuaPickrell8}, \cite{HuaPickrell9}, \cite{HuaPickrell10} for more information on similar measures; see also \cite{HuaPickrellPainleve} for a relation to Painlev\'e trancendents; see also \cite{HuaPickrellStochastic1} and \cite{HuaPickrellStochastic2} for relations to stochastic processes.} and we will say more about this in Section \ref{preliminaries}. Moreover, it was demonstrated in \cite{BorodinOlshanskiErgodic} that for this Hua-Pickrell measure, which we denote here by $\mathfrak{M}^{(s)}$ (and indeed any $\mathbb{U}(\infty)$-invariant probability measure on $\mathbb{H}(\infty)$), there is a unique probability measure $\mu^{(s)}$ on $\Omega$ describing its decomposition into the ergodic measures\footnote{A more precise version of (\ref{ErgodicDecomposition}) is the following. Both $\mathbb{H}(\infty)$ and $\Omega$ are Borel spaces, see \cite{OlshanskiVershik} and \cite{BorodinOlshanskiErgodic} for the details. Then, for any bounded Borel function $\mathfrak{F}$ on $\mathbb{H}(\infty)$ we have:
\begin{align*}
    \mathfrak{M}^{(s)}(\mathfrak{F}) = \int_\Omega \mu^{(s)}(d \omega)M_\omega(\mathfrak{F}),
\end{align*} where $\mathfrak{M}^{(s)}(\mathfrak{F})$ and $M_{\omega}(\mathfrak{F})$ are the integrals of $\mathfrak{F}$ with respect to $\mathfrak{M}^{(s)}$ and the ergodic measure $M_{\omega}$ respectively and for any such $\mathfrak{F}$ the function $\omega\mapsto M_{\omega}(\mathfrak{F})$ is a Borel function on $\Omega$, see in particular Sections 4 and 9 in \cite{BorodinOlshanskiErgodic} for further details and proofs.}:

\begin{equation}\label{ErgodicDecomposition}
\mathfrak{M}^{(s)}(d\mathbf{H}) = \int_\Omega \mu^{(s)}(d \omega)M_\omega(d\mathbf{H}).
\end{equation}

Borodin and Olshanski \cite{BorodinOlshanskiErgodic} were able to explicitly describe the distribution of the parameters $\{\alpha_n^\pm\}_{n \in \mathbb{N}}$ under $\mu^{(s)}$. However, the problem of determining the distribution of $\gamma_1$ and $\gamma_2$ under $\mu^{(s)}$ was unresolved for many years. In an important work, Qiu \cite{Qiu} proved that almost surely $\gamma_2 = 0$ and that $\gamma_1$ is precisely $\mathbf{X}(s)$.

\subsection{Main results}
\paragraph{Painlev\'e equation.} From the discussion above it is evident that understanding the family of random variables $\{\mathbf{X}(s) \}_{s\in (-\frac{1}{2}, \infty)}$ is important. A number of results, including explicit combinatorial formulae for the even moments of these random variables, were proven in \cite{assiotispainleve}. Moreover, one of the results in \cite{assiotispainleve} established a connection between the characteristic function of $\mathbf{X}(s)$ and a special case of the $\sigma$-Painlev\'e III' equation. It was however, for a reason that we explain below, restricted to integer parameters $s$ and the equation was only shown to hold in a small interval around the origin. Our first main result removes both of these restrictions and thus establishes the connection with Painlev\'e for the full range of parameter values $s>-\frac{1}{2}$.
\begin{thm}
Let $s\in \mathbb{R}$ and $s>-\frac{1}{2}$. Define:
\begin{equation}
    \phi^{(s)}(t)=\mathbb{E}\left(e^{\frac{it}{2}\mathbf{X}(s)}\right),
\end{equation}
and the associated function
\begin{equation}
    \tau^{(s)}(t):=t\frac{d}{dt}\log \phi^{(s)}(t).
\end{equation}
Then $\tau^{(s)}(t)$ is $C^\omega$ \footnote{We use the notation $C^\omega$ to denote the space of real analytic functions.} on $\mathbb{R}^* $ and is a solution to a special case of the $\sigma$-Painlev\'e III' equation for $t \in \mathbb{R}^* $:
\begin{equation}
  \left(t\frac{d^2\tau^{(s)}}{dt^2}\right)^2=-4t\left(\frac{d\tau^{(s)}}{dt}\right)^3+\left(4s^2+4\tau^{(s)}\right)\left(\frac{d\tau^{(s)}}{dt}\right)^2+t\frac{d\tau^{(s)}}{dt}-\tau^{(s)}.
  \label{eq:painleve3}
\end{equation}
Moreover, we have the boundary conditions :
\begin{equation}
\begin{cases}
      \tau^{(s)}(0)=0, & \text{for}  \ s>0,\label{eq:boundary1}\\
      & \\
   \frac{d}{dt}\tau^{(s)}(t)\Bigr\rvert_{t = 0}=0, & \text{for} \ s>\frac{1}{2}. 
\end{cases}
\end{equation}
\label{zetainftypainleve}
\end{thm}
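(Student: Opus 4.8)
The plan is to realise $\phi^{(s)}(t)$ as a Fredholm determinant of a Bessel-type integrable kernel, valid for all $s>-\tfrac12$, and then extract the $\sigma$-Painlev\'e III' equation from it by the Riemann-Hilbert method; this is what lets one drop the integrality and small-$t$ restrictions of the corresponding result in \cite{assiotispainleve}. First, note that the weight of the underlying Hua-Pickrell ensemble is even, so $\mathbf{C}^{(s)}$ is invariant under $x\mapsto-x$, whence $\mathbf{X}(s)\overset{d}{=}-\mathbf{X}(s)$, $\phi^{(s)}$ is even, and it suffices to treat $t>0$. For the truncated sum $\mathbf{X}_m(s)=\sum_{x\in\mathbf{C}^{(s)}}x\,\mathbb{1}(|x|>1/m^2)$ the multiplier $x\mapsto1-e^{itx/2}$ vanishes to first order at the origin, so the classical formula for Fourier transforms of linear statistics of a determinantal process gives $\mathbb{E}(e^{it\mathbf{X}_m(s)/2})=\det\!\big(I-(1-e^{itx/2})\,\mathbb{1}(|x|>1/m^2)\,K^{(s)}\big)$ on $L^2(\mathbb{R}^*)$. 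Letting $m\to\infty$, and using $|1-e^{itx/2}|\lesssim|t|\,|x|$ near $0$ to absorb the $|x|^{-2}$ growth of $K^{(s)}(x,x)$ while $s>-\tfrac12$ controls $x\to\infty$, these operators converge in trace norm, locally uniformly in $t$, to $\mathcal{K}_t$ with kernel $(1-e^{itx/2})K^{(s)}(x,y)$; together with the results of \cite{Qiu} guaranteeing that the limit \eqref{eq:Xsdefinition} exists, this gives $\phi^{(s)}(t)=\det(I-\mathcal{K}_t)$ for all $s>-\tfrac12$ and $t\in\mathbb{R}$, the representation that the integer-$s$/small-$t$ argument of \cite{assiotispainleve} fell short of. Since $t\mapsto\mathcal{K}_t$ is real-analytic into trace class away from the zeros of the determinant, and one checks $\phi^{(s)}$ is non-vanishing on $\mathbb{R}^*$, it follows that $\tau^{(s)}=t\frac{d}{dt}\log\phi^{(s)}$ is $C^\omega$ on $\mathbb{R}^*$; analyticity fails at $0$ precisely because $\mathbf{X}(s)$ has only finitely many moments.

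Next I would bring the determinant into a standard form. Under $x\mapsto1/x$ one has $T^{(s)}(1/u)\propto\sqrt{|u|}\,J_{s-1/2}(|u|)$ and $R^{(s)}(1/u)\propto\sqrt{|u|}\,J_{s+1/2}(|u|)$, so, using the Bessel recurrences and a further standard rescaling, $\mathcal{K}_t$ is conjugate to a multiplicative perturbation of the hard-edge Bessel point-process kernel with parameter matching $s$, the perturbation being exactly the one computing the principal-value sum of the inverse points; equivalently $\phi^{(s)}(t)$ is the Fourier-Laplace transform of $\sum_i 1/\lambda_i$ over the Bessel point process, which is the object behind the paper's third main result. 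Rescaling the integration variable by $t$ then moves $t$ out of the multiplier and into the Bessel kernel, where it appears as a dilation of the Bessel argument, i.e.\ as the location of a hard edge; this is the form on which the Painlev\'e machinery operates.

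The heart of the argument is the passage from this Fredholm determinant to the ODE. The kernel is of integrable form $\tfrac{f(x)g(y)-f(y)g(x)}{x-y}$ times a weight, so it carries a $2\times2$ matrix Riemann-Hilbert problem whose jump depends on $t$ through Bessel functions, is differentiable in $t$, and admits a Lax pair. The Jimbo-Miwa-Ueno-type variational formula expresses $\frac{d}{dt}\log\det(I-\mathcal{K}_t)$ in terms of the Riemann-Hilbert data; setting $\sigma(t)=t\frac{d}{dt}\log\det(I-\mathcal{K}_t)=\tau^{(s)}(t)$ and introducing the resolvent kernel together with the auxiliary functions $Q=(I-\mathcal{K}_t)^{-1}f$, $P=(I-\mathcal{K}_t)^{-1}g$ and their pairings against $f,g$ (the Tracy-Widom scheme, equivalently the zero-curvature condition for the Lax pair), one derives a closed autonomous system in $t$; eliminating the auxiliary functions collapses it to a single second-order equation, and the value of the Bessel parameter in the normalisation induced by the changes of variables above fixes the coefficient $4s^2$ and reproduces \eqref{eq:painleve3} precisely. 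I expect this elimination, together with controlling the $t$-differentiated Bessel Riemann-Hilbert problem uniformly for $s>-\tfrac12$ down to $t=0$, where the hard edge is approached, to be the main technical obstacle, since it is there that every numerical coefficient in \eqref{eq:painleve3} is determined; the other delicate point, already flagged, is the interchange of the principal-value limit with the expectation.

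Finally, the boundary data. For $s>0$ the first moment $\mathbb{E}\,\mathbf{X}(s)$ is finite and vanishes by the symmetry noted above, so $\phi^{(s)}$ is $C^1$ at $0$ with $(\phi^{(s)})'(0)=0$, hence $\tau^{(s)}(0)=\lim_{t\to0}t\,(\phi^{(s)})'(t)/\phi^{(s)}(t)=0$. For $s>\tfrac12$ the second moment is also finite, so near the origin $\log\phi^{(s)}(t)=-\tfrac{t^2}{8}\mathbb{E}(\mathbf{X}(s)^2)+o(t^2)$ and therefore $\frac{d}{dt}\tau^{(s)}(t)\big|_{t=0}=0$, giving the boundary conditions in \eqref{eq:boundary1}.
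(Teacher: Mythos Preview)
Your approach is genuinely different from the paper's, and in fact is the alternative the authors explicitly wish for: they write that ``it would be very desirable to have an alternative direct proof from the definition of $\mathbf{X}(s)$ using Fredholm determinants.'' What the paper actually does is work at finite $N$: Winn's identity turns $\phi_N^{(s)}$ into $e^{-t/2}$ times a Hankel determinant for the deformed Laguerre weight $y^s(y+t/N)^s e^{-y}$, and a result of Forrester--Witte (equivalently Chen--McKay via ladder operators) gives a finite-$N$ Painlev\'e V equation for $\tau_N^{(s)}$. They then analytically continue $\phi_N^{(s)}$ to $\{\Re z>0\}$ using the Laguerre-side integral, invoke Montel's theorem to get a normal family, and pass to the limit in the ODE; positivity of $\phi^{(s)}$ is argued separately (monotonicity for $s>0$, an analyticity argument for $-\tfrac12<s\le0$). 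No Fredholm determinant or Riemann--Hilbert analysis appears.

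Your programme, however, has a real gap at the very first step. You claim that $(1-e^{itx/2})K^{(s)}(x,y)$ defines a trace-class operator because ``$|1-e^{itx/2}|\lesssim|t|\,|x|$ near $0$ absorbs the $|x|^{-2}$ growth of $K^{(s)}(x,x)$.'' But $|x|\cdot|x|^{-2}=|x|^{-1}$ is still not integrable at $0$, so $\mathcal{K}_t$ is \emph{not} trace class and $\det(I-\mathcal{K}_t)$ is not defined as an ordinary Fredholm determinant. This is not a technicality: it is exactly the divergence that forces the principal-value definition of $\mathbf{X}(s)$ in the first place. To make your route work you would need a regularised determinant (e.g.\ a $\det_2$ with an explicit additive correction encoding the principal value, exploiting the oddness of $\sin(tx/2)$ against the even density), and then to check that the Tracy--Widom/Riemann--Hilbert machinery survives that regularisation. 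None of this is in your sketch. A secondary imprecision: $\mathbf{C}^{(s)}$ under $x\mapsto1/x$ is a two-sided process on $\mathbb{R}^*$, not the one-sided Bessel point process $\mathcal{P}^{(\nu)}$ of Section~\ref{InverseBessel}; the object $\mathbf{Y}(\nu)$ there satisfies a \emph{different} $\sigma$-Painlev\'e III$'$ equation from \eqref{eq:painleve3}, so the identification you suggest needs more care. Your treatment of the boundary conditions via finiteness and symmetry of the first two moments is fine and matches the paper's in spirit.
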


\begin{rmk} By using the well-known relation between derivatives of the characteristic function of a random variable and its positive integer moments, along with formula (\ref{eq:AKWformula}), Theorem \ref{zetainftypainleve} readily gives the following expression for $\mathcal{R}(s,h)$, for $h\in \mathbb{N}$ and any $s>h-\frac{1}{2}$ (recall that this restriction is necessary for the joint moments to exist):
\begin{align*}
 \mathcal{R}(s,h) =  (-1)^h\frac{G(s+1)^2}{G(2s+1)}\frac{d^{2h}}{dt^{2h}}\left[\exp\left(\int_0^t\frac{\tau^{(s)}(u)}{u}du\right)\right]\bigg|_{t=0},
\end{align*}
where the function $\tau^{(s)}$ solves the Painlev\'e equation in Theorem \ref{zetainftypainleve} above. This generalises to non-integer parameter $s$ the formula (1.16) of Theorem 2, in \cite{Basor0} which established this result for $s,h\in \mathbb{N}$ (with $s>h-\frac{1}{2}$).

For non-integer $h$ one could in principle use Fourier inversion to obtain the probability density\footnote{This implicitly assumes that $\phi^{(s)}\in L^1(\mathbb{R})$, which we expect to be true in general and in fact show for $s\in (-\frac{1}{2},0]\cup \mathbb{N}$ in the course of proving our results, so that the probability density of $\mathbf{X}(s)$ with respect to the Lebesgue measure exists. However, even without this assumption it is still possible to write down a more involved formula or alternatively one could use one of the numerous known expressions which give the fractional moments of the absolute value of a random variable in terms of its characteristic function; all of which involve some kind of integral operator.} of $\mathbf{X}(s)$ (we note that $\mathbf{X}(s)$ is a symmetric random variable and thus its density is even), then integrate to get the (fractional) moments $\mathbb{E}\left(\left|\mathbf{X}(s)\right|^{2h}\right)$ and thus obtain an integral expression for $\mathcal{R}(s,h)$ in terms of this Painlev\'e transcendent. However, in order to obtain an explicit formula for $\mathcal{R}(s,h)$ one would need as a starting point explicit exact\footnote{Namely, not simply asymptotic expansions.} expressions for $\phi^{(s)}$. These exist for $s\in \mathbb{N}\cup\{0\}$ (the corresponding functions $\tau^{(s)}$ are so-called classical solutions of the $\sigma$-Painlev\'e III' equation, see \cite{NonClassical3}, \cite{NonClassical1}, \cite{NonClassical2} for the precise definition of this terminology) and this is really the essence of Theorems \ref{s0explicit} and \ref{thm:s=0} below. 
\end{rmk}

We prove Theorem \ref{zetainftypainleve} by a limiting procedure, making use of the connection to the finite $N$ Hua-Pickrell measures that we will explain in Section \ref{preliminaries}. This strategy was also employed in \cite{assiotispainleve}, however there a result from \cite{Basor0} was used as input that required $s$ to be an integer\footnote{The second restriction, namely that the equation only holds on an interval is due to the fact that a priori it is not clear whether $\phi^{(s)}$ is non-vanishing on the real line. This requires separate arguments that we present in Section \ref{Proofs}.}. The reason the result from \cite{Basor0} was restricted to integer parameters $s$ was because the proof was using as a starting point a certain formula of Winn \cite{Winn} involving an $s\times s$ determinant with entries given by Laguerre polynomials. 

The starting point of our work is the observation that it is possible to use instead a different formula from \cite{Winn}, that we recall in Proposition \ref{prop:winn} below, valid for all $s>-\frac{1}{2}$, which gives rise to a Hankel determinant of a deformed Laguerre weight. This Hankel determinant formula is well-adapted for an application of the so-called ladder operator method, see \cite{HankeldetLagu}, \cite{Basor1}, which gives as output a Painlev\'e representation. We initially found that this computation was performed in the applied mathematics literature in \cite{Hankeldet} in relation to applications to wireless communications (with some restrictions on the parameters which can be removed by a short analytic continuation argument). However, as pointed out to us by one of the referees, an equivalent result had in fact already been obtained a decade before \cite{Hankeldet} by Forrester and Witte in \cite{ForresterWitte} by a different method that uses Okamoto's $\tau$-function theory of the Painlev\'e equations \cite{Okamoto}. We say a bit more about the history before the statement of Proposition \ref{prop:HankelPainleve1}.

Now, in order to take the $N\to \infty$ limit our arguments are completely different to the ones in \cite{assiotispainleve}. They are complex analytic in nature, while the ones in \cite{assiotispainleve} are mainly based on convergence of moments\footnote{It is important to note that we do not say anything new regarding convergence of the moments in this paper, except for the special case $s=0$ that we solve explicitly in Theorem \ref{s0explicit}.} (and thus closely related to real analysis). Making use of the power of complex analysis allows us to circumvent a number of technical issues that arise in the proof. 

Although the statement of Theorem \ref{zetainftypainleve} is complete, as it covers the full parameter range, it would be very desirable to have an alternative direct proof from the definition of $\mathbf{X}(s)$ using Fredholm determinants.

Finally, using similar arguments we establish in Section \ref{InverseBessel} a connection to another special case of the $\sigma$-Painlev\'e III' equation for the Laplace transform of the sum of inverse points of the Bessel point process.

\paragraph{Explicit expressions for integer $s$.} 
We now obtain explicit expressions for the density and all the complex moments of the absolute value of the random variables $\mathbf{X}(s)$, when the parameter $s$ is an integer. This leads to the first explicit evaluation of $\mathcal{R}(s,h)$ for general real values of $h$, when $s$ is an integer. When on the other hand $h$ is an integer while $s$ is a general real number, then explicit expressions for $\mathcal{R}(s,h)$ already exist in the literature. The case $h=0$ (in particular $\mathbf{X}(s)$ does not appear) is due to the seminal work of Keating and Snaith \cite{Zeta1}. For integer $h\ge 1$ the reader is referred to \cite{assiotispainleve} and the references therein for more details.

We first consider the case $s=0$ which is in some sense exceptional. We show that $\mathbf{X}(0)$ is actually a Cauchy random variable. Moreover, we extend the convergence of the joint moments of the characteristic polynomial $\mathcal{R}_N(0,h)$ to complex $h$ and cover the full range $\Re(h)\in \left(-\frac{1}{2},\frac{1}{2}\right)$. It is a truly remarkable fact that there is no need to take a large $N$ limit but rather we simply have: $N^{-2h}\mathcal{R}_N(0,h)=\mathcal{R}(0,h)$, for all $N\ge 1$.

\begin{thm}
The random variable $\mathbf{X}(0)$ is Cauchy distributed, namely it has probability density with respect to the Lebesgue measure given by:
\begin{align}\label{CauchyDensity}
\frac{1}{\pi(1+x^2)}, \  x \in \mathbb{R}.
\end{align}
Moreover, if $\Re(h)\in (-\frac{1}{2},\frac{1}{2})$, then:
\begin{equation}
     \frac{\mathcal{R}_N(0,h)}{N^{2h}} = \mathcal{R}(0,h) = 2^{-2h}\frac{1}{\cos(\pi h)}, \ \ \text{for \ all} \ N\ge 1.
\end{equation}
\label{s0explicit}
\end{thm}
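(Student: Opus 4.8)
The plan is to handle the density statement and the moment statement separately, though they are closely linked. For the density of $\mathbf{X}(0)$, I would start from the determinantal description in the Definition with $s=0$: here $T^{(0)}(x) = 2^{-1/2}\Gamma(1/2)|x|^{-1/2}J_{-1/2}(1/|x|)$ and $R^{(0)}(x) = 2^{1/2}\Gamma(3/2)|x|^{-1/2}J_{1/2}(1/|x|)$. Using the elementary closed forms $J_{-1/2}(z)=\sqrt{2/(\pi z)}\cos z$ and $J_{1/2}(z)=\sqrt{2/(\pi z)}\sin z$, both $T^{(0)}$ and $R^{(0)}$ collapse to explicit trigonometric expressions in $1/|x|$, and the kernel $K^{(0)}(x,y)$ should simplify dramatically — I expect it to become (a change-of-variables image of) the sine kernel, so that $\mathbf{C}^{(0)}$ is, after the map $x\mapsto 1/x$, essentially the sine process. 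The principal-value sum $\mathbf{X}(0)=\lim_m \sum_{x}x\,\mathbb{1}(|x|>1/m^2)$ then becomes a principal-value sum of inverse points of the sine process, whose characteristic function can be computed directly. Alternatively, and more cleanly, I would compute $\phi^{(0)}(t)=\mathbb{E}(e^{it\mathbf{X}(0)/2})$ via the finite-$N$ Hua-Pickrell picture promised in Section~\ref{preliminaries}: the $s=0$ Hua-Pickrell / generalised Cauchy ensemble has an especially simple form, and I expect the finite-$N$ characteristic function to already be exactly $e^{-N|t|/2}$ (up to the normalisation in the exponent), giving $\phi^{(0)}(t)=e^{-|t|/2}$ in the limit with no rescaling needed — which is precisely the characteristic function of the standard Cauchy law, yielding density $1/(\pi(1+x^2))$.

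Given $\phi^{(0)}(t)=e^{-|t|/2}$, the moment formula follows from \eqref{eq:AKWformula} together with a standard identity for fractional absolute moments of a Cauchy variable. Concretely, for $X$ standard Cauchy and $\Re(h)\in(-1/2,1/2)$ one has $\mathbb{E}(|X|^{2h}) = \frac{1}{\cos(\pi h)}$ (this is the classical Mellin-transform evaluation $\int_{\mathbb{R}}|x|^{2h}\frac{dx}{\pi(1+x^2)} = \sec(\pi h)$, valid by analytic continuation in $h$). Then \eqref{eq:AKWformula} with $s=0$, noting $G(1)^2/G(1)=1$, gives $\mathcal{R}(0,h)=2^{-2h}\sec(\pi h)$. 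The remaining — and most delicate — claim is the exact finite-$N$ identity $N^{-2h}\mathcal{R}_N(0,h)=\mathcal{R}(0,h)$ for \emph{all} $N\ge 1$ and complex $h$. For this I would not pass through $\mathbf{X}(0)$ at all but instead work directly with the unitary-group integral \eqref{eq:jointmoments}. When $s=0$ the integrand is $|\mathbf{G}_{\mathbf{U}}(0)|^{-2h}|\mathbf{G}_{\mathbf{U}}'(0)|^{2h}$, i.e. a ratio depending only on $\mathbf{G}'/\mathbf{G}$ at $\theta=0$, which is $\sum_k \frac{1}{2}\cot((\theta-\theta_k)/2)$-type data. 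I would use the Winn-type Hankel/Laguerre formula quoted as Proposition~\ref{prop:winn} (valid for all $s>-1/2$, hence in particular at $s=0$ where the $s\times s$ determinant degenerates to a $0\times 0$ determinant equal to $1$) to reduce $\mathcal{R}_N(0,h)$ to a completely explicit quantity. At $s=0$ the deformed-Laguerre Hankel determinant should be trivial and the formula should collapse to something like a single Barnes-$G$ or ratio-of-Gamma expression in $N$ and $h$ whose $N$-dependence is a pure power $N^{2h}$ times $2^{-2h}\sec(\pi h)$, with the $N$-free remainder independent of $N$ precisely because the Hankel determinant has size $0$.

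The main obstacle I anticipate is making the $s=0$ specialisation of the Winn formula rigorous and identifying the resulting closed form, since Proposition~\ref{prop:winn} is stated for $s>-1/2$ and the $s\to 0$ (or $s=0$) case is a boundary/degenerate case of an $s\times s$ determinant; I would need to check that the formula genuinely specialises correctly there rather than becoming indeterminate, possibly via a short analytic-continuation argument in $s$ paralleling the one already mentioned in the excerpt for \cite{Hankeldet}. A secondary technical point is the extension to complex $h$ with $\Re(h)\in(-1/2,1/2)$: the integral \eqref{eq:jointmoments} converges and defines an analytic function of $h$ on this strip (the constraint $-1/2<\Re(h)<s+1/2=1/2$ is exactly the condition for the singularities of $|\mathbf{G}|^{-2h}$ and $|\mathbf{G}'|^{2h}$ to be integrable), so once the identity is established for real $h$ in the strip it extends by the identity theorem; and $2^{-2h}\sec(\pi h)$ is manifestly analytic and pole-free there. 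Finally, for the density claim, if the sine-kernel route is used rather than the finite-$N$ route, the subtlety is justifying the interchange of the principal-value limit in \eqref{eq:Xsdefinition} with the expectation when computing $\phi^{(0)}$ — but this is underwritten by Qiu's well-definedness results \cite{Qiu} cited in the Definition, so it should not require new work.
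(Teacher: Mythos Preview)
Your Route~B for the density is exactly the paper's argument: formula \eqref{eq:laguerre} (which is Winn's Proposition~\ref{prop:winn} applied to $\phi_N^{(s)}$) at $s=0$ has weight $(y_j+t/N)^0 y_j^0 e^{-y_j}=e^{-y_j}$, so the $N$-fold Laguerre integral is $t$-independent and equals the normalising constant $C_N^{(0)}$, giving $\phi_N^{(0)}(t)=e^{-|t|/2}$ for \emph{every} $N\ge 1$, hence $\mathbf{X}(0)$ is Cauchy.

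You conflate two different Winn formulas, however. Proposition~\ref{prop:winn} is an $N$-fold integral identity (equivalently, after Andr\'eief, an $N\times N$ Hankel determinant) in which $s$ appears only in the weight $(y+2t)^s y^s e^{-y}$; it holds for all real $s>-\tfrac12$ and at $s=0$ is perfectly regular, not degenerate. The ``$s\times s$ determinant'' you have in mind is a \emph{different} result of Winn (see \eqref{LaguerreDeterminantExpression} in the paper), valid only for $s\in\mathbb{N}$; it plays no role here, and your concern about a boundary/analytic-continuation issue at $s=0$ is unfounded.

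For the exact finite-$N$ moment identity you take an unnecessary detour. You already have $\phi_N^{(0)}(t)=e^{-|t|/2}$ for all $N$, so $\tfrac1N\mathrm{Tr}(\mathbf{H}_N)$ is standard Cauchy for every $N$. The paper then simply invokes \cite[Proposition~2.7]{assiotispainleve} (equivalently \cite[Proposition~2]{Winn}), which gives the black-box relation
\[
N^{-2h}\mathcal{R}_N(0,h)=2^{-2h}\,\mathbb{E}_N^{(0)}\!\left(\Bigl|\tfrac1N\textstyle\sum_i x_i\Bigr|^{2h}\right),
\]
and the right-hand side is now the Cauchy absolute moment $\sec(\pi h)$ for every $N$; the extension to complex $h$ with $\Re(h)\in(-\tfrac12,\tfrac12)$ is immediate since that proposition holds verbatim for complex $h$. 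There is no need to return to the unitary integral \eqref{eq:jointmoments} or to re-derive anything from scratch. Your plan would eventually work but is longer than needed; the single ingredient you are missing is that the link between $\mathcal{R}_N$ and the Hua--Pickrell trace moments is already available off the shelf.
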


We now turn our attention to the case $s\in \mathbb{N}$. First, we recall the standard definition of hypergeometric functions:
\begin{equation*}
    _pF_q \left[\begin{matrix}
   a_1,\ldots, a_p \\
    b_1,\ldots, b_q \end{matrix} \ ; \ z\right]=\sum_{k=0}^{\infty} \frac{(a_1)_k\ldots(a_p)_k}{(b_1)_k\ldots(b_q)_k}  \frac{z^k}{k!},
\end{equation*}
where $(a)_j$ denotes the Pochhammer symbol given by $(a)_j:=\prod_{i=1}^{j}(a+i-1)$ and $(a)_0:=1$. 
In the statement and proof of Theorem \ref{thm:s=0}, we will slightly abuse notation by writing, for any $h$ such that the right hand side exists:
\begin{equation}
  \mathcal{R}(s,h) = \frac{G(s+1)^2}{G(2s+1)}2^{-2h}\mathbb{E}\left(\left|\mathbf{X}(s)\right|^{2h}\right).
  \label{eq:Rdefinition}
\end{equation}
 As explained earlier, from the main result of \cite{assiotispainleve}, this expression coincides with the original definition of $\mathcal{R}(s,h)$ as the limit of the rescaled joint moments $\lim_{N\to \infty}N^{-s^2-2h}\mathcal{R}_N(s,h)$ in the range $h\in [0,s+\frac{1}{2})$ \footnote{In particular, in this paper we do not pursue further the problem of convergence of the rescaled joint moments beyond this range, although we do expect it to hold.}.

\begin{thm}
The random variable $\mathbf{X}(1)$ has probability density with respect to the Lebesgue measure given by:
\begin{align}\label{rho1}
    \frac{-1+e^{\frac{2}{1+x^2}}\cos\left(\frac{2x}{1+x^2}\right)}{2\pi}, \  x \in \mathbb{R}
\end{align}
and hence for $\Re(h)\in \left(-\frac{1}{2},\frac{3}{2}\right)$ we have that:
\begin{equation}
    \mathcal{R}(1,h) =2^{-2h} \frac{1}{\cos(\pi h)} 
    {}_1F_1 \left[\begin{matrix}
     -2h \\
    2 \end{matrix} \ ; \ 2\right]. \label{eq:x1moments}
\end{equation}

The random variable $\mathbf{X}(2)$ has probability density with respect to the Lebesgue measure given by:
\begin{equation}\label{rho2}
    \frac{1}{\pi} \times
    \Re\left(\frac{1}{1-ix} {}_2F_2\left[\begin{matrix}
   \frac{5}{2},1 \\ 5,4 \end{matrix} \ ; \ \frac{8}{1-ix}\right]\right), \  x \in \mathbb{R}
\end{equation}
and hence for $\Re(h) \in \left(-\frac{1}{2},\frac{5}{2}\right)$ we have that
\begin{equation}
    \mathcal{R}(2,h) = 2^{-2h}\frac{1}{12\cos(\pi h)} {}_2F_2\left[\begin{matrix}
   \frac{5}{2},-2h \\
    5,4 \end{matrix} \ ; \ 8\right].
    \label{eq:s2moment}
\end{equation}

Finally, for all $s\in \mathbb{N}$, we have the following general expression for the density of $\mathbf{X}(s)$ with respect to the Lebesgue measure:
\begin{multline}
    \rho^{(s)}(x)= (-1)^{s(s-1)/2}\frac{G(2s+1)}{G(s+1)^2}\frac{1}{2\pi}\\
    \times \Re\left(\sum_{k=0}^\infty \left[\sum_{k_1+\ldots+k_s=k}\det\left[\frac{1}{(k_i+i+j-1)!}\right]_{i,j=1,\ldots,s}{k \choose k_1, \dots, k_s}\right] \left(\frac{2}{1-ix}\right)^{k+1}\right), \  x \in \mathbb{R}.
    \label{eq:generalrho}
\end{multline}
Hence, for $s\in \mathbb{N}$, $\Re(h) \in \left(-\frac{1}{2}, s+\frac{1}{2}\right)$ we have:
\begin{multline}
    \mathcal{R}(s,h)=(-1)^{s(s-1)/2}2^{-2h}\frac{1}{\cos(\pi h)} \\
    \times\left(\sum_{k=0}^\infty
    \left[\sum_{k_1+\ldots+k_s=k}\det\left[\frac{1}{(k_i+i+j-1)!}\right]_{i,j=1,\ldots,s}\prod_{j=1}^s \frac{1}{{k_j}!}\right] (-2h)_k 2^k\right).
    \label{eq:momentsgeneral}
\end{multline}
\label{thm:s=0}
\end{thm}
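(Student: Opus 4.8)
The plan is to exploit the two structural facts established earlier: first, that $\phi^{(s)}(t)=\mathbb{E}(e^{\frac{it}{2}\mathbf{X}(s)})$ is governed (through $\tau^{(s)}$) by the $\sigma$-Painlev\'e III' equation, and second, that for integer $s$ the relevant solution is a \emph{classical} one, admitting an explicit closed form. Concretely, I would start from the finite-$N$ Hua-Pickrell side and Winn's Hankel-determinant formula (Proposition \ref{prop:winn}) for the deformed Laguerre weight; for $s\in\mathbb{N}$ the Hankel determinant is of fixed size $s\times s$ with entries that are explicit (integrals of $x^{i+j-2}e^{-x}e^{-c/x}$-type, i.e. Bessel-type functions), and its $N\to\infty$ limit can be taken entry-wise by dominated convergence. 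This should produce a closed-form expression for $\phi^{(s)}(t)$ of the shape $\phi^{(s)}(t)=\frac{G(2s+1)}{G(s+1)^2}\cdot(\text{const})\cdot\det\big[\,\text{entire functions of }t\,\big]_{s\times s}$ — for $s=1$ essentially $\big(\cosh - \text{something}\big)$-type, matching the density \eqref{rho1}, and in general a determinant whose $(i,j)$ entry is a power series $\sum_{k}\frac{(t/\text{something})^k}{(k+i+j-1)!}$. Expanding that determinant of power series by the Cauchy–Binet / multinomial rule $\det[\sum_k a^{(k)}_{ij}] = \sum_k \big(\sum_{k_1+\cdots+k_s=k}\binom{k}{k_1,\dots,k_s}\det[a^{(k_i)}_{ij}]\big)$ is exactly what yields the combinatorial sum in \eqref{eq:generalrho} and \eqref{eq:momentsgeneral}.

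Once $\phi^{(s)}$ is in this explicit entire-function form, I would verify it is in $L^1(\mathbb{R})$ (it decays because each determinant entry, being a combination of $J_\nu(1/\sqrt{\,\cdot\,})$-type terms after the substitution, decays; alternatively one checks the leading asymptotics of the series directly) so that Fourier inversion is legitimate, giving $\rho^{(s)}(x)=\frac{1}{2\pi}\int_{\mathbb{R}} e^{-\frac{it}{2}x}\phi^{(s)}(t)\,dt$; pushing the contour / recognizing the resulting integral as a ratio of Gamma functions times the series, one gets the $\Re\big(\sum_k(\cdots)(\frac{2}{1-ix})^{k+1}\big)$ form, the factor $(\frac{2}{1-ix})^{k+1}$ arising from $\int_0^\infty e^{-\frac{it}{2}x}\,t^k e^{-t}\,dt$-type evaluations. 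For the moments, rather than integrating the density against $|x|^{2h}$ directly I would use the standard Mellin-type identity expressing $\mathbb{E}(|\mathbf{X}(s)|^{2h})$ of a symmetric random variable in terms of its characteristic function — $\mathbb{E}(|Y|^{2h}) = \frac{1}{\pi}\Gamma(2h+1)\sin\!\big(\pi(h+\tfrac12)\big)\int_0^\infty t^{-2h-1}\big(1-\Re\,\phi_Y(t)\big)\,dt$ up to the usual normalization, or more cleanly integrate the explicit series for $\phi^{(s)}$ term by term against $t^{-2h-1}$, each term contributing a Beta-function factor that produces the Pochhammer $(-2h)_k$ and the $\frac{1}{\cos(\pi h)}$; this simultaneously explains why the $s=1,2$ cases collapse to the hypergeometric functions ${}_1F_1$ and ${}_2F_2$ in \eqref{eq:x1moments}, \eqref{eq:s2moment}. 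The special values $s=1,2$ are then just a matter of evaluating the $1\times1$ and $2\times2$ determinants $\det[1/(k_i+i+j-1)!]$ explicitly and recognizing the resulting series.

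The main obstacle I anticipate is \textbf{not} the algebraic bookkeeping but rather \emph{rigorously justifying the $N\to\infty$ limit at the level of the explicit formula, uniformly in $t$ (or at least locally uniformly), and controlling the tail of the resulting power series} so that all interchanges of limit, sum and integral are valid — in particular proving $\phi^{(s)}\in L^1(\mathbb{R})$ for every $s\in\mathbb{N}$, which the remark in the excerpt flags as genuinely needed for the density formula. A secondary subtlety is the range of $h$: the identity for $\mathbb{E}(|\mathbf{X}(s)|^{2h})$ in terms of $\phi^{(s)}$ is only valid, a priori, for $\Re(h)\in(-\tfrac12,0]$ (convergence of the Mellin integral at $t=0$ needs $\tau^{(s)}(0)=0$-type behaviour, i.e. Theorem \ref{zetainftypainleve}'s boundary conditions), and one then extends to $\Re(h)\in(-\tfrac12,s+\tfrac12)$ by analytic continuation, checking that the hypergeometric/series expressions on the right-hand side are analytic in that strip and agree on the smaller one — here the poles of $1/\cos(\pi h)$ at half-integers must be seen to be cancelled by zeros of the hypergeometric factor in the range where the moment is finite.
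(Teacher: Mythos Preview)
Your overall strategy --- obtain an explicit closed form for $\phi^{(s)}$ when $s\in\mathbb{N}$, show it lies in $L^1(\mathbb{R})$, Fourier-invert term by term to get the density as a power series in $(1-ix)^{-1}$, then integrate for moments on a sub-strip of $h$ and analytically continue --- is exactly the route the paper takes. The subsequent steps you sketch (the determinant-of-power-series expansion producing the multinomial/$\det$ structure in \eqref{eq:generalrho}, the restriction to $h\in(-\tfrac12,0)$ followed by analytic continuation, and the cancellation of the poles of $1/\cos(\pi h)$ at half-integers) all appear in the paper essentially as you describe them.

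There is, however, a genuine confusion at your very first step. Proposition~\ref{prop:winn} does \emph{not} produce an $s\times s$ Hankel determinant: applying Andr\'eief to \eqref{eq:laguerre} gives an $N\times N$ determinant with entries $\int_0^\infty y^{j+k}(y+t/N)^s y^s e^{-y}\,dy$, not the $e^{-x}e^{-c/x}$--type integrals you mention, and there is no obvious entry-wise $N\to\infty$ limit to take. The passage to a fixed-size $s\times s$ determinant requires a \emph{different} input: the paper uses Proposition~\ref{PropExplicitChar}, which starts from another formula of Winn (his Proposition~4.5) expressing $\phi_N^{(s)}$ as an $s\times s$ determinant of Laguerre polynomials, and then invokes \cite{Forrester2},\cite{Basor0} for the $N\to\infty$ limit, yielding
\[
\phi^{(s)}(t)=(-1)^{s(s-1)/2}\frac{G(2s+1)}{G(s+1)^2}\,\frac{\det\big[I_{j+k+1}(2\sqrt{|t|})\big]_{j,k=0}^{\,s-1}}{e^{|t|/2}\,|t|^{s^2/2}}.
\]
Once this Bessel-determinant formula is in hand, the $L^1$ bound comes from $I_n(t)\le \tfrac{t^n}{2^n n!}e^t$, the density from expanding the product of Bessel series and evaluating $\int_0^\infty e^{(ix-1)t^2}t^{2k+1}\,dt$ termwise, and the moments by integrating the resulting \emph{density} series against $|x|^{2h}$ via $\int_0^\infty x^{2h}(1-ix)^{-k-1}\,dx$ (the paper works with the density rather than the characteristic-function Mellin identity you propose, though either should work). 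So your plan is sound from the second step onward; you just need to replace the incorrect appeal to Proposition~\ref{prop:winn} by the correct source of the $s\times s$ Bessel determinant.
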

\begin{rmk}
We note that the general expressions \eqref{eq:generalrho} and \eqref{eq:momentsgeneral} readily specialize to the ones for $s=1$, namely \eqref{rho1} and \eqref{eq:x1moments}. To obtain expressions \eqref{rho2} and \eqref{eq:s2moment} for $s=2$ from the general ones we need to take into account a simplification, due to Vandermonde's Identity, that we present in the proof of Theorem \ref{thm:s=0}. We also note that the expression in \eqref{eq:momentsgeneral} is indeterminate for $h \in \mathbb{N}-\frac{1}{2}$ and should be understood as a limit of $h \to m+\frac{1}{2}$ for some $m\in \mathbb{N}$, which can be computed via L'H\^opital's rule (see Remark \ref{rmk:halfint}).
\end{rmk}

An immediate corollary of Theorems \ref{s0explicit} and \ref{thm:s=0} is a refinement of conjecture \eqref{eq:conjassiotis} when $s$ is an integer. We write out the conjecture fully only for $s=0,1,2$ since these are the simplest and most elegant cases. The general form of the conjecture can be obtained from formula \eqref{eq:momentsgeneral}.
\begin{conjecture}
For $h \in [0,\frac{1}{2})$, we have:
\begin{equation}
\frac{1}{x}\int_0^{x}|\mathcal{Z}(y)|^{-2h}\left|\frac{d\mathcal{Z}}{dy}\right|^{2h}dy \sim 2^{-2h}\frac{1}{\cos(\pi h)}(\log(x))^{2h}.    
\end{equation}
For $h \in [0,\frac{3}{2})$, we have:
\begin{equation}
\frac{1}{x}\int_0^{x}|\mathcal{Z}(y)|^{2-2h}\left|\frac{d\mathcal{Z}}{dy}\right|^{2h}dy \sim 2^{-2h} \frac{1}{\cos(\pi h)} 
    {}_1F_1 \left[\begin{matrix}
     -2h \\
    2 \end{matrix} \ ; \ 2\right](\log(x))^{1+2h}.    
\end{equation}
For $h \in [0,\frac{5}{2})$, we have:
\begin{equation}
\frac{1}{x}\int_0^{x}|\mathcal{Z}(y)|^{4-2h}\left|\frac{d\mathcal{Z}}{dy}\right|^{2h}dy \sim \frac{1}{2\pi^2} 2^{-2h}\frac{1}{\cos(\pi h)} {}_2F_2\left[\begin{matrix}
   \frac{5}{2},-2h \\
    5,4 \end{matrix} \ ; \ 8\right](\log(x))^{4+2h}.\end{equation}
\label{conj:s=0}
\end{conjecture}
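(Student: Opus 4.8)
The plan is to obtain Conjecture \ref{conj:s=0} as a direct specialization of the refined Hughes conjecture \eqref{eq:conjassiotis}: for each of $s=0,1,2$ one substitutes into \eqref{eq:conjassiotis} the explicit value of $\mathbb{E}(|\mathbf{X}(s)|^{2h})$ furnished by Theorems \ref{s0explicit} and \ref{thm:s=0}, and then evaluates in closed form the two deterministic prefactors that appear there, namely the arithmetic factor $a(s)$ of \eqref{eq:arithmetic} and the Barnes-$G$ ratio $G(s+1)^2/G(2s+1)$. No new analytic input is needed beyond the results already established; the whole task is this bookkeeping, so the argument is short — but, as I stress below, it is a conditional derivation rather than an unconditional proof.

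First I would evaluate $a(s)$ from the Euler product \eqref{eq:arithmetic}, using $\frac{\Gamma(k+s)}{\Gamma(k+1)\Gamma(s)}=\frac{(s)_k}{k!}$. For $s=0$ only the $k=0$ term of the inner sum survives and equals $1$, so $a(0)=\prod_p 1=1$. For $s=1$ one has $(1)_k/k!=1$, so the inner sum is the geometric series $\sum_{k\ge0}p^{-k}=(1-p^{-1})^{-1}$, cancelling the prefactor $(1-p^{-1})$ exactly and giving $a(1)=1$. For $s=2$ one has $(2)_k/k!=k+1$, hence the inner sum is $\sum_{k\ge0}(k+1)^2p^{-k}=(1+p^{-1})(1-p^{-1})^{-3}$; multiplying by $(1-p^{-1})^4$ leaves $1-p^{-2}$ per prime, so $a(2)=\prod_p(1-p^{-2})=\zeta(2)^{-1}=6/\pi^2$. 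Next I would record the Barnes-$G$ ratios via $G(z+1)=\Gamma(z)G(z)$ (equivalently $G(n+1)=\prod_{k=1}^{n-1}k!$): namely $G(1)^2/G(1)=1$ for $s=0$, $G(2)^2/G(3)=1$ for $s=1$, and $G(3)^2/G(5)=1/12$ for $s=2$.

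Then I would substitute case by case. For $s=0$, $\mathbf{X}(0)$ is standard Cauchy by Theorem \ref{s0explicit}, so $\mathbb{E}(|\mathbf{X}(0)|^{2h})=1/\cos(\pi h)$ for $\Re(h)\in(-\tfrac12,\tfrac12)$; together with $a(0)=1$, $G(1)^2/G(1)=1$, $s^2=0$, \eqref{eq:conjassiotis} becomes the first displayed asymptotic. For $s=1,2$, Theorem \ref{thm:s=0} together with \eqref{eq:Rdefinition} gives $\mathbb{E}(|\mathbf{X}(1)|^{2h})=\tfrac{1}{\cos(\pi h)}\,{}_1F_1\!\left[\begin{matrix}-2h\\2\end{matrix} \ ; \ 2\right]$ and $\mathbb{E}(|\mathbf{X}(2)|^{2h})=\tfrac{1}{\cos(\pi h)}\,{}_2F_2\!\left[\begin{matrix}\tfrac52,-2h\\5,4\end{matrix} \ ; \ 8\right]$; inserting these with $(a(1),G(2)^2/G(3),s^2)=(1,1,1)$, respectively $(a(2),G(3)^2/G(5),s^2)=(6/\pi^2,1/12,4)$, produces the second and third displayed asymptotics, the constant $\tfrac{6}{\pi^2}\cdot\tfrac{1}{12}=\tfrac{1}{2\pi^2}$ reproducing the stated prefactor. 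I would also observe that the ranges $h\in[0,s+\tfrac12)$ appearing in the conjecture are exactly those on which \eqref{eq:conjassiotis} is asserted and on which the moment formulas of Theorems \ref{s0explicit} and \ref{thm:s=0} are valid, and that at the half-integer points $h\in\mathbb{N}-\tfrac12$ lying in these ranges both $\cos(\pi h)$ and the relevant ${}_pF_q$ vanish, so the right-hand sides are to be understood as finite limits, exactly as in the remark following Theorem \ref{thm:s=0}. As a sanity check one can compare the $s=1$ display at $h=0,\tfrac12,1$ and the $s=2$ display at small integer $h$ with the classical rigorous evaluations of the corresponding moments of Hardy's function, which match.

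The principal point to flag — the genuine obstacle — is that this is only a conditional reduction: the statement is as strong as \eqref{eq:conjassiotis}, itself the joint-moment refinement of Hughes' conjecture, which remains unproven for generic $(s,h)$ (it is known rigorously only in the classical cases of Hardy--Littlewood, Ingham, Conrey, and Conrey--Ghosh). The difficulty therefore lies not in the elementary computation above but in removing this conditionality, which would require genuinely new unconditional information about the moments of Hardy's function and its derivative on the critical line — far outside the reach of the present circle of ideas. Accordingly, the appropriate status of the argument is a derivation of the conjecture from a standard, more general conjecture, and this is how it should be presented.
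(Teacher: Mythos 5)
Your derivation is correct and is exactly how the paper arrives at the conjecture: it is stated as an immediate corollary of \eqref{eq:conjassiotis} together with Theorems \ref{s0explicit} and \ref{thm:s=0}, with the bookkeeping $a(0)=a(1)=1$, $a(2)=6/\pi^2$, $G(1)^2/G(1)=G(2)^2/G(3)=1$, $G(3)^2/G(5)=1/12$ giving the stated prefactors. You also correctly identify its status as a conditional refinement of Hughes' conjecture rather than a theorem, which matches the paper's presentation.
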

\paragraph{The sine process.} The sine process, that we denote by $\mathcal{S}$, is the determinantal point process on $\mathbb{R}$ with correlation kernel given by\footnote{Here we have rescaled the process by a factor of $-\pi$, for aesthetic purposes.} (see \cite{Dyson}, \cite{Mehta}):
\begin{equation}
    \mathfrak{K}_{\text{sine}}(x,y)=\frac{\sin(x-y)}{x-y}. \label{eq:sinekernel}
\end{equation}
$\mathcal{S}$ is arguably the most fundamental object in random matrix theory: it is the universal scaling limit of eigenvalues of complex Hermitian random matrices in the bulk of the spectrum; see for example \cite{Johansson}, \cite{Bleher}, and \cite{Deift} for precise statements. It also has close connections to the pair correlations between zeroes of the Riemann zeta function high up the critical line; see for example \cite{Montgomery} and \cite{Bourgade}.

Here, as an immediate corollary of Theorem \ref{s0explicit}, we obtain an alternative proof of the fact that the principal value sum\footnote{The sine process is translation invariant and thus it can be shown that if one simply takes the sum of inverse points without the cutoff then this sum does not converge.} of the inverse points of the sine process is Cauchy distributed. This fact was already established, as a corollary of more general results, by Aizenman and Warzel in \cite{AizenmanWarzel} using spectral theory methods.
\begin{cor}
Let $\mathcal{S}$ denote the sine process on $\mathbb{R}$. Then, the random variable
\begin{equation}
    \lim\limits_{m \to \infty}\left[ \sum_{y \in \mathcal{S}}\frac{1}{ y}\mathbb{1}\left(|y| < m^2 \right)\right]
\end{equation}
is Cauchy distributed, namely with probability density given by \eqref{CauchyDensity}.
\label{cor:sineprocess}
\end{cor}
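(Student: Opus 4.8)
The plan is to deduce Corollary~\ref{cor:sineprocess} from Theorem~\ref{s0explicit} by identifying the principal value sum of inverse points of the sine process $\mathcal{S}$ with $\mathbf{X}(0)$, up to the trivial rescaling by $-\pi$ implicit in the kernel \eqref{eq:sinekernel}. First I would record what the kernel $K^{(s)}$ becomes at $s=0$. Using $\Gamma(1)=1$, $\Gamma(1/2)=\sqrt{\pi}$ and the elementary closed forms $J_{-1/2}(z)=\sqrt{2/(\pi z)}\cos z$ and $J_{1/2}(z)=\sqrt{2/(\pi z)}\sin z$, one computes that $T^{(0)}(x)$ and $R^{(0)}(x)$ collapse to $\cos(1/|x|)$ and $\sin(1/|x|)/(something)$ type expressions, and a short trigonometric manipulation shows that $K^{(0)}(x,y)$ is exactly the sine kernel \eqref{eq:sinekernel} pushed forward under the map $x\mapsto 1/x$ (together with the $-\pi$ rescaling noted in the footnote after \eqref{eq:sinekernel}). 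In other words, $\mathbf{C}^{(0)}$ is the image of the sine process under inversion $y\mapsto 1/y$.

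Next I would match up the principal value prescriptions. By definition \eqref{eq:Xsdefinition}, $\mathbf{X}(0)=\lim_{m\to\infty}\sum_{x\in\mathbf{C}^{(0)}}x\,\mathbb{1}(|x|>1/m^2)$. Under the inversion $x=1/y$ with $y\in\mathcal{S}$, the summand $x$ becomes $1/y$ and the cutoff $|x|>1/m^2$ becomes $|y|<m^2$; thus the sum in the statement of Corollary~\ref{cor:sineprocess} is literally (modulo the $-\pi$ scaling, which just multiplies the Cauchy variable by a constant and leaves the Cauchy family invariant after the corresponding adjustment — I should state the scaling carefully) the same net as the one defining $\mathbf{X}(0)$. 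The well-definedness of the limit is already guaranteed by Qiu's results \cite{Qiu} as invoked in the Definition. Finally, Theorem~\ref{s0explicit} states that $\mathbf{X}(0)$ has the Cauchy density \eqref{CauchyDensity}, which gives the conclusion.

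The main obstacle is the bookkeeping in the first step: one must verify with care that the prefactor $\frac{1}{2\pi}\frac{(\Gamma(s+1))^2}{\Gamma(2s+1)\Gamma(2s+2)}$ together with the constants $2^{2s-1/2}\Gamma(s+1/2)$ and $2^{2s+1/2}\Gamma(s+3/2)$ in $T^{(s)},R^{(s)}$ conspire, at $s=0$, to produce exactly $\frac{\sin(1/|x|-1/|y|)}{1/x-1/y}$ after using $\frac{1}{x-y}=\frac{xy}{1/y-1/x}$ and the sign/absolute-value care needed because $T,R$ are defined via $|x|$ while the kernel has $x-y$ in the denominator (one checks the two sign sectors $xy>0$ and $xy<0$ separately, and the odd/even parities make everything consistent). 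Once this identification of point processes and of the cutoff conventions is pinned down, the corollary is immediate from Theorem~\ref{s0explicit}; I would also remark, as the text does, that this recovers the theorem of Aizenman and Warzel \cite{AizenmanWarzel}.
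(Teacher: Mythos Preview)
Your proposal is correct and follows essentially the same approach as the paper. The paper's proof simply cites \cite[Theorem I]{BorodinOlshanskiErgodic} for the identification of $\mathcal{S}$ (with kernel \eqref{eq:sinekernel} as normalized in the paper) with the image of $\mathbf{C}^{(0)}$ under $x\mapsto 1/x$, whereas you propose to verify this kernel identity directly via the closed forms of $J_{\pm 1/2}$; otherwise the argument is identical. One small clarification: with the paper's normalization of the sine kernel the inversion yields $\mathbf{C}^{(0)}$ exactly, so no residual $-\pi$ rescaling of the resulting Cauchy variable is required---the footnote merely records the choice of normalization already built into \eqref{eq:sinekernel}.
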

\paragraph{Acknowledgements.} BB and AS gratefully acknowledge the financial support from the Mathematical Institute, University of Oxford. BB also gratefully acknowledges the financial support from the EPSRC. MAG gratefully acknowledges the financial support from Prof. J.P. Keating's start-up grant. TA is grateful for financial support at the early stages of this work from ERC Advanced Grant  740900 (LogCorRM). We would like to thank the anonymous referees for a number of very useful comments and suggestions which have improved the presentation and we are also particularly grateful to them for pointing out a number of references.

\begin{section}{Proofs of The Main Results}
\subsection{Preliminaries}
\label{preliminaries}
We begin with a number of preliminaries. Let $\mathbb{H}(N)$ denote the linear space of $N \times N$ complex Hermitian matrices. We define, for a parameter $s\in \mathbb{R}$, $s>-\frac{1}{2}$, the Hua-Pickrell measure $\mathfrak{M}_N^{(s)}$ on $\mathbb{H}(N)$ as follows:
\begin{equation}
    \mathfrak{M}_{N}^{(s)}(d\mathbf{H}) := \text{const}\cdot \det\left(\left(1+\mathbf{H}^2\right)^{-s-N}\right) \times d\mathbf{H}, \label{eq:1}
\end{equation}
where $d\mathbf{H}$ is the Lebesgue measure on $\mathbb{H}(N)$ and the constant is chosen so that this is a probability measure on $\mathbb{H}(N)$.

The distribution of the eigenvalues of a random matrix from the ensemble in \eqref{eq:1} is  given by the following probability measure $\mathfrak{m}_N^{(s)}$ on $\mathbb{R}^N/\mathfrak{S}(N)$, where $\mathfrak{S}(N)$ denotes the $N$-th symmetric group, see \cite{BorodinOlshanskiErgodic}:
\begin{align}
     \mathfrak{m}_{N}^{(s)}(d \mathbf{x}) := \frac{1}{\mathfrak{Z}_N^{(s)}}\cdot{\Delta(\mathbf{x})}^2 \prod _{j=1}^{N}\left( 1+x_{j}^{2}\right) ^{-s-N }dx_j,
     \label{eq:huapickrellensemble}
\end{align}
where $\Delta(\mathbf{x})$ is the Vandermonde determinant:
\begin{equation*}
   {\Delta(\mathbf{x})} = \prod _{1\leq l<k\leq N}(x_k-x_l),
\end{equation*}
and the normalization constant is given by:
\begin{equation*}
 \mathfrak{Z}_N^{(s)} = \pi^N2^{-N(N+2s -1)} \cdot \prod_{j=0}^{N-1} \frac{j!\Gamma(2s + N-j)}{\Gamma(s+N-j)^2}.
\end{equation*}
Throughout this paper we denote expectations taken with respect to the measures $\mathfrak{M}_N^{(s)}$ and $\mathfrak{m}_N^{(s)}$ by $\mathbb{E}_N^{(s)}$.

We now make concrete the connection\footnote{In fact this is how the abstract ergodic decomposition results are proven, see \cite{BorodinOlshanskiErgodic}, \cite{Qiu} for more details.} between the Hua-Pickrell measures and the random variable $\mathbf{X}(s)$. If $\mathbf{H}_N$ is a random matrix distributed according to the probability measure $\mathfrak{M}_N^{(s)}$, it was proven by Borodin and Olshanski \cite{BorodinOlshanskiErgodic} that the sequence of random variables $\big\{\frac{1}{N}\text{Tr}(\mathbf{H}_N)\big\}_{N\ge1}$ is convergent in distribution, and by Qiu \cite{Qiu} that the limiting distribution can be identified with that of $\mathbf{X}(s)$, so that we have:
\begin{equation}
   \frac{1}{N}\text{Tr}\left(\mathbf{H}_N\right) \xrightarrow[N\rightarrow\infty]{d} \mathbf{X}(s).
   \label{eq:convergence1}
\end{equation}
Hence, if we consider the characteristic function of the scaled trace of $\mathbf{H}_N$:
\begin{equation}
    \phi_{N}^{(s)}(t) := \mathbb{E}_N^{(s)}\left(e^{\frac{it}{2N}\text{Tr}(\mathbf{H}_N)}\right)
\end{equation}
we note that by \eqref{eq:convergence1} we have that:
\begin{equation}
    \mathbb{E}_{N}^{(s)}\left(e^{\frac{it}{2N}\text{Tr}(\mathbf{H}_N)}\right) \xrightarrow{N\rightarrow\infty} \mathbb{E}\left(e^{\frac{it}{2}\mathbf{X}(s)}\right),
    \label{eq:convergence2}
\end{equation}
uniformly on compact subsets of $\mathbb{R}$. Finally, we define:
\begin{equation}
    \tau_{N}^{(s)}(t) := t \frac{d}{dt} \log(\phi_{N}^{(s)}(t)).
\end{equation}

\subsection{Proofs}
\label{Proofs}
\begin{prop}
Let $s\in \mathbb{R}$ and $s>-\frac{1}{2}$. Then, for $ t\in \mathbb{R}^*$ , $\tau_{N}^{(s)}(t)$ is a solution to a particular Painlev\'e V equation:
\begin{multline}
    \left(t\frac{d^2\tau_{N}^{(s)}}{dt^2}\right)^2=-4t\left(\frac{d\tau_{N}^{(s)}}{dt}\right)^3+\left(4s^2+4\tau_{N}^{(s)}+\frac{t^2}{N^2}\right)\left(\frac{d\tau_{N}^{(s)}}{dt}\right)^2\\+t\left(1+\frac{2s}{N}-\frac{2\tau_{N}^{(s)}}{N^2}\right)\frac{d\tau_{N}^{(s)}}{dt}-\left(1+\frac{2s}{N}-\frac{\tau_{N}^{(s)}}{N^2}\right)\tau_{N}^{(s)}.
    \label{eq:painleve5}
\end{multline}
\label{zeta_Npainleve}
\end{prop}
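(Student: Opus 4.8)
The plan is to derive equation \eqref{eq:painleve5} from the Hankel determinant representation of $\phi_N^{(s)}(t)$ via the ladder operator method, following the strategy outlined in the introduction. First I would use the formula of Winn (Proposition \ref{prop:winn}, referenced but not yet displayed in this excerpt) to express $\phi_N^{(s)}(t)$, up to an explicit prefactor, as a Hankel determinant $D_N[w]$ built from a deformed Laguerre-type weight on $(0,\infty)$, whose deformation parameter is a linear function of $t$. Concretely one expects a weight of the form $x^{\alpha} e^{-x} e^{-\lambda t/x}$ (or a Bessel-type confluent version), so that $\phi_N^{(s)}$ is, after rescaling and removing an analytic non-vanishing factor, the ratio of two such Hankel determinants.

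Next I would invoke the standard machinery: orthogonal polynomials $P_n(x)$ with respect to this $t$-dependent weight, their three-term recurrence coefficients $\alpha_n(t), \beta_n(t)$, and the auxiliary quantities appearing in the ladder (lowering/raising) operator relations. One writes down the two ladder-operator compatibility conditions $(S_1)$ and $(S_2)$ (and the identity sometimes labelled $(S_2')$) that the auxiliary functions $R_n(t)$ and $r_n(t)$ satisfy. The Hankel determinant is related to these via the Toda-type equations for $\frac{d}{dt}\log D_N$ and the discrete analogue, so that $\tau_N^{(s)}(t) = t \frac{d}{dt}\log \phi_N^{(s)}(t)$ becomes, after absorbing the prefactor's contribution, an explicit rational combination of $R_N, r_N, \alpha_N, \beta_N$ evaluated at the relevant index. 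Eliminating all of these auxiliary variables in favour of $\tau_N^{(s)}$ and its first two $t$-derivatives is the computational heart of the argument and yields a second-order second-degree ODE; one then checks it matches \eqref{eq:painleve5} after the change of variables that restores the factor of $N$ in $\frac{it}{2N}\mathrm{Tr}$. As the introduction notes, this computation already appears (with parameter restrictions removable by analytic continuation) in \cite{Hankeldet}, and equivalently in \cite{ForresterWitte} via Okamoto's $\tau$-function theory, so at the level of a proof one may either cite Proposition \ref{prop:HankelPainleve1} directly and perform only the translation of variables, or reproduce the ladder-operator derivation.

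The real-analyticity on $\mathbb{R}^*$ and the restriction to $t \neq 0$ deserve a separate remark: $\phi_N^{(s)}$ is entire in $t$ (it is the characteristic function of a compactly controlled — in fact, for $s$ large, integrable-tailed — distribution coming from a finite matrix integral), but $\tau_N^{(s)} = t\,\frac{d}{dt}\log\phi_N^{(s)}$ is only guaranteed analytic where $\phi_N^{(s)}$ is non-vanishing, and the Hankel determinant $D_N[w]$ may vanish for some real $t$; hence the ODE is asserted on $\mathbb{R}^*$ with the understanding that it holds on any interval where $\phi_N^{(s)} \neq 0$, and near $t=0$ one has $\phi_N^{(s)}(0)=1$ so no issue arises there — the puncture at $0$ is an artifact of the $t\,\frac{d}{dt}$ normalization and the $1/x$ in the weight, and the equation extends by continuity. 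I would state this carefully but not belabor it.

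The step I expect to be the main obstacle is the bookkeeping of the precise deformed weight and the matching of normalization constants: getting the Winn formula into exactly the Hankel form to which the ladder-operator results of \cite{Hankeldet} or \cite{ForresterWitte} apply, tracking how the parameters $(\alpha, \text{deformation scale})$ there correspond to $(s, N, t)$ here, and verifying that the prefactor multiplying the Hankel determinant in Winn's identity contributes only a term that combines cleanly into the coefficients $4s^2$, $1 + \frac{2s}{N}$, $\frac{t^2}{N^2}$, etc., of \eqref{eq:painleve5} rather than spoiling the second-degree structure. Once that dictionary is fixed, the remaining elimination is routine (if lengthy) algebra, and the $N\to\infty$ degeneration to \eqref{eq:painleve3} of Theorem \ref{zetainftypainleve} — which is the reason this proposition is being proved — follows by formally dropping the $1/N$ and $1/N^2$ terms, with the analytic justification of that limit deferred to the proof of Theorem \ref{zetainftypainleve}.
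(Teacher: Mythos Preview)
Your overall strategy matches the paper's proof exactly: use Winn's identity (Proposition~\ref{prop:winn}) to convert $\phi_N^{(s)}$ into a Hankel determinant for a deformed Laguerre weight, then cite Proposition~\ref{prop:HankelPainleve1} (the Forrester--Witte / Chen--McKay result) and translate parameters. The paper does precisely this, noting in addition that $\phi_N^{(s)}$ is even so it suffices to treat $t>0$.

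However, your concrete guess for the deformed weight is wrong, and this is not just cosmetic. Winn's identity produces the weight $w(y;t,\alpha,\lambda) = (y+t)^\lambda y^\alpha e^{-y}$ with $\lambda=\alpha=s$ and deformation parameter $t/N$; the prefactor contributed is $e^{-t/2}$, so $\tau_N^{(s)}(t) = -\frac{t}{2} + t\frac{d}{dt}\log D_N$. The weight you propose, $x^\alpha e^{-x}e^{-\lambda t/x}$, is the singularly perturbed Laguerre weight of Chen--Its \cite{HankeldetLagu}, which is the one relevant to Section~\ref{InverseBessel} (the Bessel sum $\mathbf{Y}(\nu)$), not to this proposition. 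The two Hankel determinants satisfy different $\sigma$-equations with different parameter dictionaries, so if you actually carried out the ladder-operator computation with the $e^{-t/x}$ weight you would not arrive at \eqref{eq:painleve5}. Since you correctly identify the bookkeeping as the crux and allow for citing Proposition~\ref{prop:HankelPainleve1} directly, this is easily repaired once you see the actual output of Winn's formula.

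A minor correction: $\phi_N^{(s)}$ is \emph{not} entire in $t$. The Hua--Pickrell trace has only finitely many moments (roughly $2s+1$), so the characteristic function is not analytic at $t=0$ in general; the paper separately constructs an analytic continuation to $\{\Re z>0\}$ in Proposition~\ref{prop:analytic} via the Laguerre-side integral, and the positivity of $\phi_N^{(s)}$ on $\mathbb{R}$ (hence well-definedness of $\tau_N^{(s)}$ everywhere) is established later in the proof of Theorem~\ref{zetainftypainleve}, not here.
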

Our starting point is the following remarkable integral identity due to Winn from \cite{Winn}, see Proposition 3 therein. For the convenience of the reader and completeness of the paper we outline Winn's proof from \cite{Winn}.
\begin{prop}[B. Winn]
Let $s\in \mathbb{C}$ with $\Re(s)>-\frac{1}{2}$ and $t>0$. Then,
\begin{multline}
\int_{-\infty}^{\infty}\ldots \int_{-\infty}^{\infty} \prod_{j=1}^N
\frac{e^{itx_j}}{\left(1+{x_j}^2\right)^{s+N}}  {\Delta(\mathbf{x})}^2 d\mathbf{x} \\= \frac{\pi^N}{2^{(N+2s-1)N}} \prod_{j=0}^{N-1}\frac{1}{\Gamma\left(s+1+j\right)^2} \cdot e^{-Nt}\int_{0}^{\infty}\ldots \int_{0}^{\infty} \prod_{j=1}^N \left(y_j+2t\right)^s{y_j}^s e^{-y_j}{\Delta(\mathbf{y})}^2 d\mathbf{y}.
\label{eq:winn0}
\end{multline}
\label{prop:winn}
\end{prop}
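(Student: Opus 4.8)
The plan is to prove this by transforming the left-hand side into an $N\times N$ determinant, applying a change of variables that turns the Cauchy-type weight into a Laguerre-type weight, and then recognising the result as another Hankel-type determinant. Concretely, I would first note that by the Andréief (Heine) identity, the multiple integral on the left equals $N!$ times $\det\left[\int_{-\infty}^\infty x^{i+j-2}\frac{e^{itx}}{(1+x^2)^{s+N}}\,dx\right]_{i,j=1}^N$, and similarly the right-hand side integral equals $N!\det\left[\int_0^\infty y^{i+j-2}(y+2t)^s y^s e^{-y}\,dy\right]_{i,j=1}^N$ up to the explicit constant. However, rather than match these Hankel determinants entry by entry (which is awkward because monomials do not transform into monomials), the cleaner route is to work at the level of the measures: exhibit an explicit bijective substitution relating a point $x\in\mathbb{R}$ to a point $y\in(0,\infty)$ under which, up to Jacobian factors that can be pulled through the Vandermonde squared, the Cauchy weight $e^{itx}(1+x^2)^{-s-N}$ is carried to the deformed Laguerre weight $e^{-y}(y+2t)^s y^s$ times $e^{-t}$ per variable.

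The key computation is therefore the one-dimensional integral identity underlying it. I would write $1+x^2=(1-ix)(1+ix)$ and use the substitution suggested by the shape of the answer: something like $1-ix = \text{(affine function of a new variable)}$, chosen so that $e^{itx}$ becomes a pure exponential $e^{-y}$ after deforming the contour. The natural candidate is to first move the contour of integration in $x$ and then set, for each variable, a variable $y$ with $x = i\frac{y+t}{y+\dots}$ or more likely to rotate so that $e^{itx}=e^{-t(\text{something})}$; the presence of $e^{-Nt}$ and $(y_j+2t)^s$ on the right strongly suggests the map $1+ix \mapsto$ a translate by $2t$. I expect the honest verification to reduce, after the substitution and a contour rotation justified by decay of $(1+x^2)^{-s-N}$ and $\Re(s+N)>\tfrac12$, to the classical integral representation of the Gamma function (or a confluent hypergeometric integral), with the Vandermonde factor $\Delta(\mathbf{x})^2$ transforming into $\Delta(\mathbf{y})^2$ times a product of Jacobians precisely because $\Delta$ is a polynomial and the substitution is a common Möbius-type map applied to every coordinate, so that differences $x_k-x_l$ pick up a clean common factor. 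The constant $\frac{\pi^N}{2^{(N+2s-1)N}}\prod_{j=0}^{N-1}\Gamma(s+1+j)^{-2}$ should then fall out by comparing normalisations, e.g. by specialising to $t\to 0^+$ where the left side becomes $\mathfrak{Z}_N^{(s)}$ (as recorded just above) and the right side becomes a Selberg/Laguerre Mehta integral $\int_{(0,\infty)^N}\prod y_j^{2s}e^{-y_j}\Delta(\mathbf{y})^2 d\mathbf{y}$ with a known closed form.

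The main obstacle I anticipate is the contour-shifting step: the left-hand integrand has poles at $x=\pm i$ of non-integer order $s+N$ (branch points when $s\notin\mathbb{Z}$), so rotating the $x$-contour onto a ray where the exponential $e^{itx}$ decays requires care with the branch cuts of $(1+x^2)^{s+N}$ and with the contributions from the arcs at infinity; one must check that for $\Re(s)>-\tfrac12$ (together with the $+N$) the decay is strong enough to kill the arcs and that no singularity is crossed, or else account for a Hankel-contour wrapping a branch cut — this is presumably exactly why the identity needs $\Re(s)>-\tfrac12$ and why the factor $(y+2t)^s$ appears (it is the ``boundary value'' of $(1+x^2)^s$ along the deformed contour). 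Since the excerpt says Winn's proof is merely outlined for completeness, I would present the substitution and the resulting Gamma-function evaluation, and state the analytic continuation in $s$ and $t$ (the identity being an equality of functions analytic in $\Re(s)>-\tfrac12$, $t>0$, it suffices to verify it on a convenient subregion, e.g. $s>0$, and extend) rather than dwelling on every estimate.
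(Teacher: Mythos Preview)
Your overall framework is right and matches the paper's approach at both ends: reduce to a one-dimensional integral identity via Andr\'eief, and fix the constant by sending $t\to 0^+$ and invoking the Selberg/Laguerre integral. But the central step is not yet pinned down, and the paper's proof shows that the ``awkwardness'' you identify with the monomial Hankel form is resolved by a specific algebraic trick you only gesture at.

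The key move you are missing is this: rather than applying Andr\'eief with the monomial basis $\{x^k\}$, one first uses the homogeneity of the Vandermonde together with $1+x^2=(1+ix)(1-ix)$ to rewrite
\[
\Delta(\mathbf{x})^2\prod_{j=1}^N (1+x_j^2)^{-(s+N)} \;=\; \Delta\!\left(\tfrac{1}{1+i\mathbf{x}}\right)\Delta\!\left(\tfrac{1}{1-i\mathbf{x}}\right)\prod_{j=1}^N (1+x_j^2)^{-(s+1)},
\]
so that Andr\'eief produces the determinant
\[
N!\,\det\!\left[\int_{-\infty}^{\infty}\frac{e^{itx}\,dx}{(1+ix)^{s+1+k}(1-ix)^{s+1+j}}\right]_{j,k=0}^{N-1}.
\]
This is precisely the ``M\"obius-type map applied to every coordinate'' you allude to, but the point is that it is used to \emph{choose the basis functions} for Andr\'eief, not as a change of integration variable in the $N$-fold integral. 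With the entries in this form, the one-dimensional identity one needs is
\[
\int_{-\infty}^{\infty}\frac{e^{itx}\,dx}{(1+ix)^{s+1+k}(1-ix)^{s+1+j}} \;=\; C\,e^{-t}\int_0^{\infty} y^{s+j}(y+2t)^{s+k}e^{-y}\,dy,
\]
with $C$ independent of $j,k,t$; this is \cite[4.15]{Winn} and is indeed proved by the contour deformation you anticipate. One then runs Andr\'eief backwards on the right-hand determinant, using the observation $\Delta(\mathbf{y}+2t)=\Delta(\mathbf{y})$, to recover the $N$-fold Laguerre-type integral.

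Your proposed alternative of a single bijective substitution $x\mapsto y$ carrying the Cauchy weight directly to the deformed Laguerre weight is not going to work as stated: the right-hand weight $(y+2t)^s y^s e^{-y}$ does not arise from a real change of variable on $\mathbb{R}$, and the factor $(y+2t)^s$ really comes from the \emph{asymmetry} between the exponents of $(1+ix)$ and $(1-ix)$ in the Andr\'eief entries, which only appears after the Vandermonde rewriting. So the proof is not a substitution on the measure but a choice of basis followed by a scalar integral identity.
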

\begin{proof}
 It suffices to prove the equality up to a constant. The constant can then be obtained by taking the limit $t\to 0$, since both sides then have explicit evaluations using the Selberg integral, see \cite{Selberg}.
 
 Using the homogeneity of $\Delta(\mathbf{x})$, rewrite the integral on the left-hand side of \eqref{eq:winn0} as\footnote{Here we use the notation $f(\mathbf{x}) = \left(f(x_1),\dots,f(x_n)\right)$ to denote the evaluation of a scalar function $f:\mathbb{R}\to \mathbb{R}$ at a vector argument, e.g. $\frac{1}{\mathbf{x}} = (\frac{1}{x_1},\dots,\frac{1}{x_n})$.}:
\begin{multline}
\int_{-\infty}^{\infty}\ldots \int_{-\infty}^{\infty} \prod_{j=1}^N
\frac{e^{itx_j}}{\left(1+{x_j}^2\right)^{s+1}}  {\Delta\left(\frac{1}{1+i\mathbf{x}}\right)\Delta\left(\frac{1}{1-i\mathbf{x}}\right)} d\mathbf{x} \\ =
N! \det \left[ \int_{-\infty}^\infty  \frac{e^{itx} dx}{(1+ix)^{s+1+k}(1-ix)^{s+1+j}}\right]_{j,k = 0, \dots,N-1.},
\label{eq:winn1}
\end{multline}
where in the last line we have used the Andr\'eief identity. It was then shown in \cite[4.15]{Winn} that we have the following relation between one dimensional integrals:
\begin{equation}
    \int_{-\infty}^\infty  \frac{e^{itx} dx}{(1+ix)^{s+1+k}(1-ix)^{s+1+j}} = 
    Ce^{-t} \int_0^\infty y^{s+j}\left(y+2t\right)^{s+k}e^{-y}dy,
\label{eq:winn2}
\end{equation}
for a constant $C$ independent of $j,k$ and $t$. Therefore, combining the last line of \eqref{eq:winn1} with the right-hand side of \eqref{eq:winn2}, and using the Andr\'eief identity once again, noting that $\Delta\left(\mathbf{y}+2t\right) = \Delta(\mathbf{y})$, we obtain the formula \eqref{eq:winn0}, up to a constant.
\end{proof}
\begin{proof}[Proof of Proposition \ref{zeta_Npainleve}]
By definition, we have that:
\begin{equation}
     \phi_{N}^{(s)}(t) = \mathbb{E}_{N}^{(s)}\left(e^{\frac{it}{2}\sum_{i=1}^N \frac{x_i}{N}}\right)=\frac{1}{N! \mathfrak{Z}_N^{(s)}} \int_{-\infty}^{\infty}\ldots \int_{-\infty}^{\infty} \prod_{j=1}^N
\frac{e^{\frac{it}{2N}x_j}}{\left(1+{x_j}^2\right)^{s+N}}  {\Delta(\mathbf{x})}^2 d\mathbf{x}.
\label{eq:phiintegral}
\end{equation}
Now making use of the integral identity \eqref{eq:winn0}, we obtain for $t\ge0$:
\begin{equation}
   \phi_{N}^{(s)}(t)=\frac{1}{C_N^{(s)}}\cdot e^{-t/2}\int_{0}^{\infty}\ldots \int_{0}^{\infty} \prod_{j=1}^N \left(y_j+\frac{t}{N}\right)^s{y_j}^s e^{-y_j}{\Delta(\mathbf{y})}^2 d\mathbf{y}
   \label{eq:laguerre}
\end{equation}
where $C_N^{(s)}$ is a normalization constant, given by:
\begin{equation}
    C_N^{(s)} = N! \prod_{j=1}^{N}\Gamma(j)\Gamma(2s+j).
    \label{eq:constant}
\end{equation}
 An application of the Andr\'eief identity to \eqref{eq:laguerre} yields:
\begin{equation*}
  \phi_{N}^{(s)}(t)= \frac{N!}{C_N^{(s)}}\cdot e^{-t/2} \det\left[\int_{0}^{\infty}y^{j+k}\left(y+\frac{t}{N}\right)^s{y}^se^{-y}dy\right]_{j,k=0,\ldots, N-1},
\end{equation*}
so that 
\begin{equation}
 \tau_{N}^{(s)}(t)=-\frac{t}{2}+t\frac{d}{dt}\log  \det\left[\int_{0}^{\infty}y^{j+k}\left(y+\frac{t}{N}\right)^s{y}^se^{-y}dy
 \right]_{j,k=0,\ldots, N-1}.
\end{equation}
Note that $\phi_N^{(s)}$ and hence $\tau_N^{(s)}(t)$ are even functions, which can be seen by the change of variables $x_j \mapsto -x_j$ in \eqref{eq:phiintegral}. Taking this into account, a simple calculation reveals that if $\tau_N^{(s)}(t)$ satisfies the Painlev\'e equation \eqref{eq:painleve5} for $t>0$, then it is also a solution for $t<0$. Thus, we may restrict to $t>0$, and the result follows immediately as a corollary of the next proposition.
\end{proof}

The following proposition, in an equivalent form, is originally due to Forrester and Witte, see Proposition 3.2 in \cite{ForresterWitte}, using Okamoto's $\tau$-function theory of the Painlev\'e equations \cite{Okamoto}. A decade later this result was proven again\footnote{In fact, in the intervening years the result was also studied in the physics literature, see \cite{Osipov}, by yet another method.} using the ladder operator method (see for example \cite{Basor1}, \cite{Basor4} for more on this technique) in \cite{Hankeldet}, but with the restriction $\alpha>0$. The restriction to $\alpha>0$ is due to the fact that the proofs of certain intermediate results in \cite{Hankeldet} require a number of integrations by parts which are no longer valid in the range $-1<\alpha\le 0$. Here, instead of simply citing the equivalent proposition from \cite{ForresterWitte}, we give a short analytic continuation argument that extends the result of \cite{Hankeldet} to the full range of parameters mainly because analogous arguments can also be used to  extend the intermediate results in \cite{Hankeldet}, on certain orthogonal polynomials, which are of independent interest.

\begin{prop}
Let 
\begin{equation}
    F_N(t;\alpha)=\det\left[\int_{0}^{\infty}y^{j+k}w(y;t,\alpha)dy
 \right]_{j,k=0,\ldots, N-1}
\end{equation}
and the associated function
\begin{equation}
    H_N(t;\alpha)=t\frac{d}{dt}\log F_N(t;\alpha)
\end{equation}
where
\begin{equation}
    w(y;t ,\alpha)=\left(y+t\right)^\lambda {y}^\alpha e^{-y}.
\end{equation}
Then, we have that for $\alpha > -1, t > 0$ and $\lambda \in \mathbb{R}$:
\begin{multline}
    \left(t\frac{d^2H_N}{dt^2}\right)^2=\left(t\frac{dH_N}{dt}-H_N+\frac{dH_N}{dt}(2N+\alpha+\lambda)+N\lambda\right)^2\\-4\frac{dH_N}{dt}\left(t\frac{dH_N}{dt}-H_N+N(N+\alpha+\lambda)\right)\left(\frac{dH_N}{dt}+\lambda\right).
    \label{eq:hankelchen}
\end{multline}
\label{prop:HankelPainleve1}
\end{prop}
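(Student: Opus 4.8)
The plan is to prove Proposition \ref{prop:HankelPainleve1} by the \emph{ladder operator method} applied to the system of orthogonal polynomials $\{p_n(y;t)\}_{n\ge0}$ associated with the deformed Laguerre weight $w(y;t,\alpha)=(y+t)^\lambda y^\alpha e^{-y}$ on $(0,\infty)$, together with an analytic continuation argument that removes the restriction $\lambda>0$ (equivalently $\alpha>0$ in the form appearing in \cite{Hankeldet}, after relabelling) under which this computation was carried out there. First I would recall the setup: writing $h_n$ for the squared $L^2(w)$-norms, the Hankel determinant factorises as $F_N=\prod_{n=0}^{N-1}h_n$, and the standard three-term recurrence $yp_n=p_{n+1}+\alpha_n p_n+\beta_n p_{n-1}$ holds, with the logarithmic $t$-derivative of $F_N$ expressible through the recurrence coefficients and certain auxiliary quantities $R_n(t),r_n(t)$ that enter the lowering (ladder) operator. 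The key structural input is that, because $w'/w = \lambda/(y+t) + \alpha/y - 1$ is rational in $y$ with poles at $y=0$ and $y=-t$, the ladder operator relations close up and yield a finite set of algebraic ``compatibility'' identities ($S_1$, $S_2$, $S_2'$ in the usual notation) relating $R_n,r_n,\alpha_n,\beta_n$. From these one derives a first-order (in $t$) dynamical system for the auxiliary quantities; this is the computational heart and I would only sketch it, citing \cite{Hankeldet}, \cite{HankeldetLagu}, \cite{Basor1} for the detailed manipulations.

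The second main step is to identify $H_N(t;\alpha)=t\frac{d}{dt}\log F_N(t;\alpha)$ with a natural combination of the recurrence and ladder-operator quantities, and then to eliminate all of those quantities in favour of $H_N$, $\frac{dH_N}{dt}$ and $t$ alone, arriving at the second-order second-degree ODE \eqref{eq:hankelchen}. Concretely, one shows $H_N$ (or a simple shift of it) equals $\sum_{n=0}^{N-1}R_n$ up to an explicit affine term in $t$, computes $\frac{dH_N}{dt}$ via the equations of motion, and uses the algebraic $S$-relations to express the two-jet of $H_N$ polynomially in a single auxiliary variable; eliminating that variable between the expressions for $t\frac{d^2H_N}{dt^2}$, $t\frac{dH_N}{dt}-H_N$ and $\frac{dH_N}{dt}$ produces precisely the stated identity. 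This is the $\sigma$-form of Painlev\'e V (degenerating to $\sigma$-Painlev\'e III$'$ in the scaling limit relevant to Proposition \ref{zeta_Npainleve}), and matching the coefficients $2N+\alpha+\lambda$, $N\lambda$, $N(N+\alpha+\lambda)$ is a bookkeeping exercise.

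The third step handles the parameter range. The derivation in \cite{Hankeldet} is valid for $\lambda>0$ because several intermediate identities are obtained by integrating by parts in $y$, which requires the boundary terms at $y=0$ to vanish — fine when the exponent of $y+t$, or the relevant combination, is positive, but not obviously so for $-1<\lambda\le 0$. My plan is: for each fixed $t>0$ and $\alpha>-1$, both sides of \eqref{eq:hankelchen} are, after clearing denominators, polynomial expressions in $H_N$ and its $t$-derivatives, whose coefficients depend on $\lambda$ polynomially; moreover $F_N(t;\alpha,\lambda)$ and hence $H_N$ and all needed $t$-derivatives extend to \emph{holomorphic} functions of $\lambda$ in a neighbourhood of the real line (the integrals defining the Hankel entries converge locally uniformly and depend analytically on $\lambda\in\mathbb{C}$, and $F_N$ is nonvanishing for real $\lambda$ near $[0,\infty)$ since the weight is positive, so $\log F_N$ is analytic there). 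Since the difference of the two sides of \eqref{eq:hankelchen} is a real-analytic function of $(t,\lambda)$ that vanishes identically for $\lambda>0$ (for every fixed $t>0$), by the identity theorem in the $\lambda$ variable it vanishes for all $\lambda$ in the domain of analyticity, in particular for $\lambda\in(-1,\infty)$, which is the range we need.

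The main obstacle, I expect, is not the analytic continuation — which is soft once one checks holomorphy and non-vanishing of $F_N$ in $\lambda$ — but rather carrying out the elimination in the second step cleanly enough to land on the exact stated form \eqref{eq:hankelchen} rather than an equivalent-but-messier relation; the ladder-operator computation produces the $\sigma$-Painlev\'e equation only after a somewhat delicate combination of the $S_1,S_2,S_2'$ identities, and keeping track of the three $N$-dependent constants through that process is where errors creep in. Since this computation is already in the literature (\cite{ForresterWitte} via Okamoto's $\tau$-function theory, \cite{Hankeldet} via ladder operators, and also \cite{Osipov}), I would present it in the streamlined ladder-operator form, give the equations of motion and the $S$-relations explicitly, indicate the elimination, and refer to \cite{Hankeldet} for the purely algebraic verification, focusing the novel content on the analytic continuation argument and the accompanying extension of the intermediate results on the orthogonal polynomials.
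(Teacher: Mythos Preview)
Your overall strategy---cite the existing ladder-operator derivation for the restricted parameter range, then extend by analytic continuation---matches the paper's own proof exactly. The paper does not reproduce any of the ladder-operator computation (your first two steps), but simply cites \cite{Hankeldet} and \cite{ForresterWitte} for the case $\alpha>0$, so your sketch of that part is more detailed than necessary though not wrong.

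There is, however, a concrete error in your third step: you have misidentified \emph{which} parameter carries the restriction. In the weight $w(y;t,\alpha)=(y+t)^\lambda y^\alpha e^{-y}$, the boundary behaviour at $y=0$ under integration by parts is governed by the factor $y^\alpha$, not $(y+t)^\lambda$ (the latter is smooth and nonvanishing at $y=0$ since $t>0$). Accordingly, the result in \cite{Hankeldet} is proved for $\alpha>0$ with $\lambda\in\mathbb{R}$ arbitrary, and the extension needed is from $\alpha>0$ to $\alpha>-1$; there is no restriction on $\lambda$ to remove. Your analytic continuation in $\lambda$, with $\alpha>-1$ fixed, therefore does not close the gap: you would be continuing from a region where the identity is not yet established. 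The fix is immediate---run your holomorphy and non-vanishing argument in the $\alpha$ variable instead (for fixed $t>0$, $\lambda\in\mathbb{R}$): the map $\alpha\mapsto\int_0^\infty y^{j+k+\alpha}(y+t)^\lambda e^{-y}\,dy$ is holomorphic on $\Re(\alpha)>-1$ by Morera/Fubini, $F_N(t;\alpha)>0$ on $(-1,\infty)$ since the weight is positive (use the Andr\'eief identity to rewrite $F_N$ as a positive multiple integral), and then the identity theorem in $\alpha$ extends \eqref{eq:hankelchen} from $(0,\infty)$ to a complex neighbourhood of $(-1,\infty)$. This is precisely what the paper does.
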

\begin{proof}
As mentioned above, in \cite{Hankeldet} the result was proven for $\alpha>0$ and here we simply extend the validity of \eqref{eq:hankelchen} to $\alpha>-1$ by proving that $H_N,\frac{d}{dt}H_N$ and $\frac{d^2}{dt^2}H_N$ can be extended as functions of $\alpha$, for fixed $t>0$ and $\lambda \in \mathbb{R}$, analytically to a neighbourhood $V_N \subseteq \mathbb{C}$ of the part of the real line $(-1,\infty)$.
For the remainder of this proof, let $t>0$ and $\lambda\in \mathbb{R}$ be fixed and arbitrary. Firstly, note that the function
\begin{equation}
    \beta \mapsto \int_{0}^{\infty}y^{\beta}(y+t)^{\lambda}e^{-y}dy
\end{equation}
is holomorphic on $\Re(\beta)>-1$, which can be seen by combining Fubini's and Morera's theorems. Hence, by using rules for derivatives of determinants and differentiation under the integral sign we get that $\alpha\mapsto\frac{d^p}{dt^p}F_N(t;\alpha)$ is holomorphic for $p=0,1,2,3, \ldots$.
Now, by rewriting $F_N(t;\alpha)$ as in the integral in \eqref{eq:laguerre} via the Andr\'eief identity, we see that
\begin{equation}
    F_N(t;\alpha)>0, \ \ \text{for all} \ \ \alpha \in (-1,\infty),
\end{equation}
 so that by continuity of $\alpha \mapsto F_N(t;\alpha)$, there exists $V_N$ such that  $(-1,\infty) \subseteq V_N \subseteq \mathbb{C}$ such that $\left|F_N(t;\alpha)\right|>0$, for all $ \alpha \in V_N$. Hence, the left hand side and right hand side of \eqref{eq:hankelchen} are two analytic functions on $V_N$ that agree on $(0,\infty)$ so that they must agree on the whole of $V_N$. The proof for the case $\alpha>-1$ is now complete.
\end{proof}
In order to prove Theorem \ref{zetainftypainleve} we will need the following proposition. It is important to note that the use of the obvious candidates to perform these analytic extensions in the proposition below, namely the expressions as characteristic functions, will not work. These expressions do not exist off the real line due to the fact that the random variables involved only have a finite number of integer moments.
\begin{prop}[Analytic continuation]
Let $s\in \mathbb{R}$ and $s>-\frac{1}{2}$. Then, there exist holomorphic functions  $f_N$ for $N=1,2,\dots$ and $f$ on $\{ z \in \mathbb{C}: \Re z  > 0 \}$, with $f_N(0) = f(0)=1$, such that
\begin{align}
    \phi_N^{(s)} = \left.f_N\right|_{[0,\infty)} && \text{and} &&  \phi^{(s)} = \left.f\right|_{[0,\infty)}.
\end{align}
Moreover, for $\Re z > 0, \ p=1,2,\dots$ we have:
\begin{equation}
\lim\limits_{N\to\infty} \frac{d^p}{dz^p}f_{N}(z) = \frac{d^p}{dz^p}f(z).
\end{equation}
\label{prop:analytic}
\end{prop}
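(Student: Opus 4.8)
The plan is to exhibit explicit holomorphic candidates for $f_N$ and $f$ by replacing the oscillatory Fourier-type integrals defining $\phi_N^{(s)}$ with the Laguerre-side integrals appearing on the right of Winn's identity \eqref{eq:winn0}, which converge for complex values of the ``$2t$'' parameter. Concretely, from \eqref{eq:laguerre} we have, for real $t\ge 0$,
\begin{equation*}
\phi_N^{(s)}(t)=\frac{e^{-t/2}}{C_N^{(s)}}\int_{[0,\infty)^N}\prod_{j=1}^N\left(y_j+\tfrac{t}{N}\right)^s y_j^s e^{-y_j}\,\Delta(\mathbf{y})^2\,d\mathbf{y},
\end{equation*}
so I would define $f_N(z)$ by the same formula with $t$ replaced by $z$, using the principal branch of $(y_j+z/N)^s$ (well-defined since $\Re z>0$ forces $y_j+z/N$ into the right half-plane, away from the branch cut). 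Holomorphy of $f_N$ on $\{\Re z>0\}$ then follows from Morera's and Fubini's theorems exactly as in the proof of Proposition \ref{prop:HankelPainleve1}, once one checks the integrand is dominated locally uniformly in $z$; the bound $|y_j+z/N|^s\le (y_j+C)^{s}$ or $\le (y_j)^s$-type estimates (splitting on $\Re s\ge 0$ vs.\ $\Re s<0$, i.e.\ $s>-\tfrac12$) together with the Gaussian factor $e^{-y_j}$ give integrability, and the Andr\'eief/Hankel-determinant rewriting shows $f_N$ is (up to the entire factor $e^{-z/2}/C_N^{(s)}$) a determinant of one-dimensional integrals each holomorphic on $\{\Re z>0\}$. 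The normalisation $f_N(0)=1$ is immediate since $f_N(0)=\phi_N^{(s)}(0)=1$, and the restriction identity $\phi_N^{(s)}=f_N|_{[0,\infty)}$ holds by construction (for $t=0$) and by analytic continuation from $t>0$ of the identity \eqref{eq:laguerre}, which was itself derived from Winn for $t>0$.

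For $f$ I would take the pointwise-in-$z$ limit of $f_N$; the key is to show this limit exists and is holomorphic on $\{\Re z>0\}$, and that on $[0,\infty)$ it coincides with $\phi^{(s)}$. Here the natural route is Montel/Vitali: I would establish that the family $\{f_N\}$ is locally uniformly bounded on $\{\Re z>0\}$, hence normal, and that it converges on the subset $[0,\infty)$ (which has a limit point in $\{\Re z>0\}$) — the latter being precisely \eqref{eq:convergence2}, the convergence $\phi_N^{(s)}(t)\to\phi^{(s)}(t)$ uniformly on compacts of $\mathbb{R}$. Vitali's theorem then upgrades convergence on $[0,\infty)$ to locally uniform convergence on all of $\{\Re z>0\}$, producing a holomorphic limit $f$ with $f|_{[0,\infty)}=\phi^{(s)}$ and $f(0)=\lim_N f_N(0)=1$. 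Locally uniform convergence of holomorphic functions automatically gives locally uniform convergence of all derivatives, which yields the final claim $\frac{d^p}{dz^p}f_N(z)\to\frac{d^p}{dz^p}f(z)$ for $\Re z>0$ and every $p\ge 1$.

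The main obstacle I expect is the local uniform boundedness of $\{f_N\}$ on the full right half-plane. On the imaginary axis $f_N$ agrees with the characteristic function $\phi_N^{(s)}$, so $|f_N|\le 1$ there, but away from the real and imaginary axes the factor $e^{\Re z/2}\cdot$(growth of the Laguerre integral in $\Re z$) could a priori blow up with $N$ because of the $N$-dependent shift $z/N$ inside the $N$-fold integral and the $N$-dependent normalisation $C_N^{(s)}$. The way to control this is to track the cancellation already present on the real line: on $[0,\infty)$ one has $f_N=\phi_N^{(s)}\in[0,1]$, so the prefactor $e^{-t/2}/C_N^{(s)}$ exactly balances the growth of $\int_{[0,\infty)^N}\prod(y_j+t/N)^s y_j^s e^{-y_j}\Delta(\mathbf{y})^2$ in $t$, uniformly in $N$; I would then bound $|f_N(z)|$ for $\Re z = x>0$ by $e^{x/2} e^{-x/2}\,(\text{same integral with }t\text{ replaced by }x\text{ in the moduli})\big/ C_N^{(s)}$ up to controlling $|(y_j+z/N)^s|$ by $(y_j+x/N)^{\Re s}$ (times a bounded $|{\arg}|$-dependent factor), reducing everything back to the real-argument estimate that is uniformly bounded by the value of $\phi_N^{(s)}$-type integrals and hence by a constant depending only on $\Re z$. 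Assembling these estimates carefully, uniformly in $N$, is the technical heart of the argument; once it is in place, Vitali's theorem does the rest.
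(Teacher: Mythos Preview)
Your approach is essentially identical to the paper's: define $f_N$ via the Laguerre-side integral, verify holomorphy by Morera--Fubini, establish local uniform boundedness to invoke Montel (the paper phrases Vitali as Montel plus uniqueness of subsequential limits via agreement on $(0,\infty)$), and conclude convergence of all derivatives from locally uniform convergence of holomorphic functions.

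The boundedness step is in fact simpler than you fear, and your sketch can be sharpened: since $s$ is real, $|(y_j+z/N)^s|=|y_j+z/N|^s$ with no $\arg$-factor. For $s>0$, the triangle inequality gives $|y_j+z/N|\le y_j+M_K/N$ on any compact $K\subset\{\Re z>0\}$ with $M_K:=\sup_{z\in K}|z|$, whence $|f_N(z)|\le |e^{-z/2}|\,e^{M_K/2}\phi_N^{(s)}(M_K)\le e^{M_K/2}$ uniformly in $N$. For $s\in(-\tfrac12,0]$, one has $|y_j+z/N|\ge \Re(y_j+z/N)\ge y_j$ when $\Re z>0$, so $|y_j+z/N|^s\le y_j^s$ and $|f_N(z)|\le 1$. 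This is exactly the paper's argument. (Your remark about the imaginary axis is a slip: $f_N$ agrees with $\phi_N^{(s)}$ on the positive \emph{real} axis, not the imaginary axis, and the domain $\{\Re z>0\}$ excludes the latter anyway.)
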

\begin{proof}[Proof of Proposition \ref{prop:analytic}]
We use an argument based on Montel's theorem and Morera's theorem, see  \cite{SteinShakarchi}. For $\{z \in \mathbb{C}: \Re z > 0\}$, we define as in \eqref{eq:laguerre}:
\begin{equation}
    f_N(z) := \frac{1}{C_N^{(s)}}\cdot\int_{0}^{\infty}\ldots \int_{0}^{\infty} e^{-z/2} \prod_{j=1}^N \left(y_j+\frac{z}{N}\right)^s{y_j}^s e^{-y_j}{\Delta(\mathbf{y})}^2 d\mathbf{y}.
\label{eq:analyticf}
\end{equation}
Then, as in \eqref{eq:laguerre} we have that $\phi_N(t) = f_N(t)$ for $t \in [0, \infty)$. We claim that $f_N(z)$ is holomorphic on $\{z \in \mathbb{C}: \Re z > 0\}$. Towards this end, we note that the integrand in \eqref{eq:analyticf} is bounded by an integrable function on any compact subset of $\{z \in \mathbb{C}: \Re z > 0 \}$, and hence by Fubini's theorem, for any closed path $\gamma$ contained in $\{z \in \mathbb{C}: \Re z > 0 \}$ we have that
\begin{equation}
    \int_\gamma f_N(z) dz = \frac{1}{C_N^{(s)}}\cdot \int_{0}^{\infty}\ldots \int_{0}^{\infty} \int_\gamma e^{-z/2} \prod_{j=1}^N \left(y_j+\frac{z}{N}\right)^s{y_j}^s e^{-y_j}{\Delta(\mathbf{y})}^2 dz d\mathbf{y} = 0,
\end{equation}
where we have used Cauchy's theorem. Hence, by Morera's theorem $f_N(z)$ is holomorphic on $\{z\in \mathbb{C} : \Re z > 0 \}$. We now claim further that the family $\left\{f_N\right\}_{N\ge 1}$ is uniformly bounded on compact subsets of $\{z \in \mathbb{C}: \Re z > 0\}$. For, if $K$ is such a compact subset there is a constant $M_K > 0$ such that
\begin{equation*}
    \left|y+\frac{z}{N}\right| \le \left|y+\frac{M_K}{N}\right|,
\end{equation*}
uniformly for $z \in K$, $y > 0$ and $N \ge 1$. Therefore, in the case $s>0$, it follows from the formula \eqref{eq:analyticf} that 
\begin{equation*}
    |f_N(z)| \ \le \ |e^{-\frac{z}{2}}|e^{\frac{M_K}{2}}\left|\phi_N^{(s)}(M_K)\right|  \ \le \ e^\frac{M_K}{2},
\end{equation*}
uniformly for $z \in K$ and $N \ge 1$, where in the last inequality we have used the fact that $\phi_N^{(s)}$ is a characteristic function \eqref{eq:phiintegral}. In the case $s \in (-\frac{1}{2},0]$, we note that $|f_N(z)| \le 1$ whenever $\Re z > 0$, for all $N\ge 1$. Hence, by Montel's theorem the family $\left\{f_N\right\}_{N\ge 1}$ is normal, and so every subsequence has a sub-subsequence converging uniformly on compacts of $\{z\in \mathbb{C} : \Re z > 0 \}$. As all these limits are holomorphic and agree on $(0,\infty)$ by \eqref{eq:convergence2}, they are equal. This property implies that the family $\left\{f_N\right\}_{N\ge 1}$ is convergent uniformly on compacts to a holomorphic function which we denote by $f(z)$. By properties of uniform limits of holomorphic functions, the sequences of derivatives $\left\{\frac{d^p}{dz^p}f_{N}(z)\right\}_{N\ge 1}$, $p = 0,1,2,\dots$, also converge (uniformly on compacts) to $\frac{d^p}{dz^p}f(z)$. Thus $f(z)$ is the required analytic continuation to $\{z \in \mathbb{C}: \Re z > 0 \}$, and we have the required convergence of derivatives. 
\end{proof}

\begin{proof}[Proof of Theorem \ref{zetainftypainleve}]
We first show that the characteristic functions $\phi_N^{(s)}(t)$ and $\phi^{(s)}(t)$ are strictly positive for $s>-\frac{1}{2}$. 

For $s>0$, using the integral representation of $\phi_{N}^{(s)}(t)$ in \eqref{eq:laguerre} we can see that $\phi_{N}^{(s)}(t)e^{\frac{t}{2}}$ is increasing in $t$ for $t\geq0$. Hence, we get that for all $ N\geq1$ and $t>0$,
\begin{align*}
   \phi_{N}^{(s)}(t)\geq e^{-\frac{t}{2}}\phi_{N}^{(s)}(0)=e^{-\frac{t}{2}}
\end{align*}
and hence, as $\phi_{N}^{(s)}(-t)=\phi_{N}^{(s)}(t)$, which can be seen by the change of variables $x_j \mapsto -x_j$ in \eqref{eq:phiintegral}, we have that for all $t \in \mathbb{R}$:
\begin{align}
   \phi_{N}^{(s)}(t) \geq \exp\left(\frac{-|t|}{2}\right)>0.
   \label{eq:phiinequality}
\end{align}
By \eqref{eq:convergence2}, we see that \eqref{eq:phiinequality} implies that $\phi^{(s)}(t)$ is non-vanishing for $s>0, t \in \mathbb{R}$.
For $s \in (-\frac{1}{2},0]$ we need to argue more indirectly. By the formula \eqref{eq:laguerre} we see that $\phi_N^{(s)}(t) \ge 0$ for all $t \in \mathbb{R}$ and $N\ge 1$, and hence $\phi^{(s)}(t) \ge 0$ for all $t \in \mathbb{R}$. Moreover, for $s \in (-\frac{1}{2},0]$, by \eqref{eq:laguerre}, for all $N\ge 1$, $\phi_N^{(s)}(t)$, and thus also $\phi^{(s)}(t)$, are non-increasing on $(0, \infty)$. Hence, if these functions vanish at some $r>0$ then they are identically $0$ on $[r, \infty)$. However, since by Proposition \ref{prop:analytic} they are restrictions of holomorphic functions, this would imply that they are identically $0$ for $t>0$, which is a contradiction since a characteristic function is non-vanishing on a real neighbourhood around zero. Hence, $\phi_N^{(s)}(t)$ and $\phi^{(s)}(t)$ are strictly positive for $t\in(0,\infty)$ and the result follows from noting as before that these functions are even.

This implies that $\tau_N^{(s)}(t)$ and $\tau^{(s)}(t)$ are well-defined for all $N\ge 1, s>-\frac{1}{2}$, and $t \in \mathbb{R}$. Hence, by Proposition \ref{prop:analytic}, and using that the functions $\tau_N^{(s)}$ are even,  we know that for all $t\in \mathbb{R}^*$ and $p=0,1,2,\dots$:
\begin{equation*}
    \frac{d^p}{dt^p}\tau_{N}^{(s)}(t) \xrightarrow{N\rightarrow\infty} \frac{d^p}{dt^p}\tau^{(s)}(t).
\end{equation*}
Hence the Painlev\'e equation \eqref{eq:painleve3} now follows immediately by taking the limit $N \to \infty$ in the equation \eqref{eq:painleve5}. 

We now prove $\tau^{(s)}$ is real-analytic on $\mathbb{R}^*$. Let $f$ denote the analytic continuation of $\phi^{(s)}$ to $\{ z : \Re z >0 \}$, as in Proposition \ref{prop:analytic}. We know from the above that for $t\in (0,\infty)$, 
\begin{equation*}
\Re\left(f(t)\right) = \Re\left(\phi^{(s)}(t)\right) = \phi^{(s)}(t) > 0.
\end{equation*}
By continuity this implies that $\Re\left(f(t)\right) > 0$ on an open set $V$ with $(0,\infty) \subseteq V \subseteq \{ z : \Re z > 0\}$. Therefore we can define a branch of $\log(z)$ such that $\log(f(z))$ is holomorphic on $V$. As $\tau^{(s)}$ is even, this implies that it is real-analytic on $\mathbb{R}^*$.
 
To establish the boundary conditions for $s>\frac{1}{2}$ we have to employ a different method. Whenever $r<2s+1$, we note that $\mathbb{E}\left(\left|\sum_{i=1}^N {x_i}\right|^r\right)<\infty$ and that the sequence $\left\{\left|\sum_{i=1}^N \frac{x_i}{N}\right|^r\right\}_{N \ge 1}$ is uniformly integrable (see \cite[Proposition 2.10]{assiotispainleve}, and also Proposition \ref{prop:uniformInt} below, where we prove a similar statement using the same idea, and hence, also making use of \eqref{eq:convergence2}:
\begin{equation*}
    \frac{d^p}{dt^p}\phi_{N}^{(s)}(t) \xrightarrow{N\rightarrow\infty} \frac{d^p}{dt^p}\phi^{(s)}(t),
\end{equation*}
for any $t \in \mathbb{R}$ and $p\in \{0,1\}$ whenever $s>0$, and $p \in \{0,1,2\}$ whenever $s>\frac{1}{2}$. Hence, we conclude, again using that the $\phi_N^{(s)}$ and $\phi^{(s)}$ are non-vanishing, that:
\begin{equation}
    \frac{d^p}{dt^p}\tau_{N}^{(s)}(t) \xrightarrow{N\rightarrow\infty} \frac{d^p}{dt^p}\tau^{(s)}(t),\label{eq:boundaryconvergence}
\end{equation}
for $t \in \mathbb{R}$ and $p=0$ whenever $s>0$, and $p \in \{0,1\}$ whenever $s>\frac{1}{2}$. The boundary conditions $\tau_N^{(s)}(0) = 0$ for $s>0$ and $\left.\frac{d}{dt}\tau_N^{(s)}(t)\right|_{t=0}=0$ for $s>\frac{1}{2}$ are computed in \cite{assiotispainleve}. Hence by \eqref{eq:boundaryconvergence} we deduce the boundary conditions \eqref{eq:boundary1}.
\end{proof}
\label{zetainf}

\begin{rmk}
The simple observation made earlier that for $s\le 0$, the integral representation of $\phi_{N}^{(s)}(t)\cdot e^{t/2}$ given in equation \eqref{eq:laguerre} is non-increasing in $t$ gives the following bound: $\phi^{(s)}(t) \le e^{-t/2}$. By Fourier inversion, since the characteristic function of $\mathbf{X}(s)$ is in $L^1(\mathbb{R})$, this readily implies the non-trivial result that the law of $\mathbf{X}(s)$ has a bounded and continuous density with respect to the Lebesgue measure. In fact, due to the exponential decay of the characteristic function, the density is $C^\infty$-smooth. We expect this result to be true for $s>0$ as well\footnote{Clearly, for $s\in \mathbb{N}$ we already have explicit expressions for the density of $\mathbf{X}(s)$ from Theorem \ref{thm:s=0}.}. This is likely to require a more elaborate argument and we do not pursue it further in this paper.
\end{rmk}

We now prove Theorem \ref{s0explicit}. It is worth noting that from the definition of the characteristic function $\phi_N^{(s)}$ it is unclear whether any value of the parameter $s$ is special, while if one looks at formula \eqref{eq:laguerre} it becomes evident that $s=0$ is exceptional. We believe this remarkable observation was missed in the literature due to the fact that formulae such as \eqref{eq:winn0} were not thought of in probabilistic terms, but rather as simply some intermediate formulae required to prove the determinantal representation in terms of Laguerre polynomials\footnote{In fact, we will also make use of this expression in the proof of Proposition \ref{PropExplicitChar} below.} mentioned in the introduction.

\begin{proof}[Proof of  Theorem \ref{s0explicit}.]
 When $s=0$, the integral in the formula \eqref{eq:laguerre} evaluates simply to $C_N^{(0)}$, as in \eqref{eq:constant}. Therefore, it follows that $\phi_N^{(0)}(t) = e^{-|t|/2}$ for all $N = 1, 2, \dots $. Thus, by \eqref{eq:convergence2} we get that $\mathbb{E}(e^{it \mathbf{X}(0)/2}) = e^{-|t|/2}$, i.e. $\mathbf{X}(0)$ is Cauchy distributed.
 
 For the second part, note that by \cite[Proposition 2.7]{assiotispainleve} we have, for $\Re(h)\in (-\frac{1}{2},\frac{1}{2})$ \footnote{The result in \cite[Proposition 2.7]{assiotispainleve} is stated for real $h$ but the argument goes through verbatim for complex $h$ as well. See also \cite[Proposition 2]{Winn}.} that:
\begin{equation}
   \frac{  \mathcal{R}_N(0,h)}{N^{2h}}=2^{-2h}\mathbb{E}_{N}^{(s)}\left(\left|\sum_{i=1}^N\frac{x_i}{N}\right|^{2h}\right).
\end{equation}
Now, for $s=0$, for all $N\ge 1$, $\sum_{i=1}^N\frac{x_i}{N}$ and $\mathbf{X}(s)$ are identically distributed. Thus, for all $ N \geq 1$, $\Re(h) \in (-\frac{1}{2},\frac{1}{2})$ we have:
\begin{equation}
    \frac{\mathcal{R}_N(0,h)}{N^{2h}}=2^{-2h}\mathbb{E}_{N}^{(0)}\left(\left|\sum_{i=1}^N\frac{x_i}{N}\right|^{2h}\right)=\mathcal{R}(0,h)
    =2^{-2h} \mathbb{E}\left(\left|\mathbf{X}(0)\right|^{2h}\right)=\int_{0}^{\infty}\frac{x^{2h}}{\pi \left(x^2+1\right)} dx
    \label{eq:s0proof}
\end{equation}
From \cite[3.241.2]{IntegralsBook}, we have that for $\Re(h)\in(-\frac{1}{2},\frac{1}{2})$:
\begin{equation*}
    \int_{0}^{\infty}\frac{x^{2h}}{\pi \left(x^2+1\right)} dx=\frac{1}{2\cos(\pi h)}.
\end{equation*}
Substituting this into the right-hand side of \eqref{eq:s0proof} gives the desired result.
\end{proof}
Before proving Theorem \ref{thm:s=0}, we need the following proposition that we essentially extract from the results of \cite{Forrester2}, \cite{Basor0}. At the end of this section we also present a short elementary proof of this result for $s=1$.
\begin{prop}\label{PropExplicitChar}
Let $s\in \mathbb{N}$. Then, $\phi^{(s)}(t)$ is given explicitly  as follows:
\begin{equation}
    \phi^{(s)}(t)= (-1)^{s(s-1)/2}\frac{G(2s+1)}{G(s+1)^2} \times\frac{\det\left[I_{j+k+1}\left(2\sqrt{|t|}\right)\right]_{j,k=0,1,\ldots s-1}}{e^{|t|/2}|t|^{s^2/2}},
    \label{explicitphi}
\end{equation}
where $I_\alpha$ denotes the modified Bessel function of the first kind and $G$ denotes the Barnes $G$-function.
\begin{proof}
By \cite[Proposition 4.5.]{Winn} for $s\in\mathbb{N}$ we have that:
\begin{equation}
    \phi_N^{(s)}(t)= (-1)^{s(s-1)/2}\prod_{j=0}^{N-1} \frac{\Gamma(s+N-j)^2}{j!\Gamma(2s + N-j)} e^{-|t|/2} \det\left[L_{N+s-1-i-j}^{2s-1}\left(-\frac{|t|}{N}\right)\right]_{i,j=0,\ldots,s-1}\label{LaguerreDeterminantExpression}
\end{equation}
where $L_n^{(\alpha)}(x)$ denotes the Laguerre polynomial of order $n$ and parameter $\alpha$ (see \cite{Winn}).

The large $N$ limit of the logarithmic derivative of the right hand side of \eqref{LaguerreDeterminantExpression} was first established in \cite{Forrester2} and also using Riemann-Hilbert problem methods in \cite{Basor0}. Thus, using for example \cite[eq. 5-79]{Basor0} we have, in our notation, that\footnote{There is a typo in \cite[eq. 5-79]{Basor0}, namely a missing factor of $\frac{1}{2}$ for $s^2/t$, which has been corrected here.} for $s\in\mathbb{N}$:
\begin{equation}
    \frac{d}{dt} \log \phi_N^{(s)}(t) \xrightarrow{N \rightarrow \infty} \frac{d}{dt} \log \left(\frac{\det\left[I_{j+k+1}\left(2\sqrt{|t|}\right)\right]_{j,k=0,1,\ldots s-1}}{e^{|t|/2}|t|^{s^2/2}} \right).
\end{equation}
Noting that $\frac{d}{dt} \log \phi_N^{(s)}(t)\xrightarrow{N\rightarrow \infty} \frac{d}{dt} \log \phi^{(s)}(t)$ for $s>0$ by using the results in the proof of Theorem \ref{zetainftypainleve} we obtain equality \eqref{explicitphi} up to a multiplicative constant. To recover this constant we observe that both sides of \eqref{explicitphi} must equal $1$ at $t=0$ and we note that the evaluation of the right hand side of \eqref{explicitphi} at $t=0$ can be obtained by taking $h=0$ in \cite[Corollary 1.5]{assiotispainleve}.
\end{proof}
\end{prop}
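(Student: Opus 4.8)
\medskip
\noindent\textbf{Proof proposal.} The plan is to obtain \eqref{explicitphi} as the $N\to\infty$ limit of an exact finite-$N$ formula, rather than via the logarithmic derivative. As for $\phi_N^{(s)}$ in \eqref{eq:phiintegral}, the change of variables $x_j\mapsto -x_j$ shows $\phi^{(s)}$ is even, so it is enough to treat $t>0$. The starting point is Winn's identity \cite[Proposition 4.5]{Winn} (valid precisely because $s\in\mathbb{N}$), which expresses $\phi_N^{(s)}(t)$ as the product of $(-1)^{s(s-1)/2}\prod_{j=0}^{N-1}\tfrac{\Gamma(s+N-j)^2}{j!\,\Gamma(2s+N-j)}$, the factor $e^{-t/2}$, and the $s\times s$ Laguerre determinant $\det\bigl[L_{N+s-1-i-j}^{(2s-1)}(-t/N)\bigr]_{i,j=0}^{s-1}$. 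By \eqref{eq:convergence2} the left-hand side converges, uniformly on compacts, to $\phi^{(s)}(t)$; hence it suffices to compute the $N\to\infty$ limit of the right-hand side, which I would split into the Laguerre determinant and the scalar prefactor.

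\medskip
For the determinant, the first step is to apply the contiguity relation $L_n^{(\alpha-1)}(z)=L_n^{(\alpha)}(z)-L_{n-1}^{(\alpha)}(z)$ through a staircase of determinant-preserving row and column operations, each of the ``subtract an adjacent row/column'' type, which lowers the parameters while leaving every index $N+s-1-i-j$ untouched (subtracting column $j+1$ from column $j$, or row $i+1$ from row $i$, decreases the index by exactly $1$, so the difference is again a single Laguerre polynomial). Carrying this through one obtains
\[\det\Bigl[L_{N+s-1-i-j}^{(2s-1)}\!\bigl(-\tfrac tN\bigr)\Bigr]_{i,j=0}^{s-1}=\det\Bigl[L_{N+s-1-i-j}^{(i+j+1)}\!\bigl(-\tfrac tN\bigr)\Bigr]_{i,j=0}^{s-1},\]
in which the Bessel index $i+j+1$ has appeared. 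One then has, for each fixed $(i,j)$, the entrywise limit $N^{-(i+j+1)}L_{N+s-1-i-j}^{(i+j+1)}(-t/N)\to t^{-(i+j+1)/2}I_{i+j+1}(2\sqrt t)$, which follows by dominated convergence directly from $L_n^{(\alpha)}(-y)=\sum_{m\ge 0}\binom{n+\alpha}{n-m}\tfrac{y^m}{m!}$, using $\binom{n+\alpha}{n-m}N^{-\alpha-m}\to\tfrac1{(\alpha+m)!}$ (as $n/N\to 1$) and $\sum_{m}\tfrac{t^m}{m!(m+\alpha)!}=t^{-\alpha/2}I_\alpha(2\sqrt t)$. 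Since the determinant of a fixed-size matrix is a polynomial in its entries, pulling out the common factor $N^{s^2}$ (using $\sum_{i=0}^{s-1}i+\sum_{j=0}^{s-1}j+s=s^2$) gives $N^{-s^2}\det[\,\cdot\,]\to t^{-s^2/2}\det[I_{j+k+1}(2\sqrt t)]_{j,k=0}^{s-1}$.

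\medskip
For the prefactor, I would use $\prod_{j=0}^{N-1}\Gamma(s+N-j)=\prod_{m=1}^{N}\Gamma(m+s)=G(N+s+1)/G(s+1)$ and the analogous identities for the other two products to rewrite it as $(-1)^{s(s-1)/2}\tfrac{G(2s+1)}{G(s+1)^2}\cdot\tfrac{G(N+s+1)^2}{G(N+1)G(N+2s+1)}$. The last factor is a second difference in the shift parameter, so inserting the asymptotic expansion $\log G(z+1)=\tfrac{z^2}{2}\log z-\tfrac34 z^2+\tfrac z2\log(2\pi)-\tfrac1{12}\log z+\zeta'(-1)+o(1)$ yields $\tfrac{G(N+s+1)^2}{G(N+1)G(N+2s+1)}=N^{-s^2}(1+o(1))$, so that the prefactor times $N^{s^2}$ tends to $(-1)^{s(s-1)/2}\tfrac{G(2s+1)}{G(s+1)^2}$. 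Combining this with the determinant limit and the surviving $e^{-t/2}$ gives \eqref{explicitphi} for $t>0$, hence for all $t$ by evenness, the value at $t=0$ being recovered by continuity. (Alternatively one could fix the constant at the end by evaluating at $t=0$ using $\lim_{t\to0^+}t^{-s^2/2}\det[I_{j+k+1}(2\sqrt t)]=\det[1/(j+k+1)!]=(-1)^{s(s-1)/2}G(s+1)^2/G(2s+1)$, a standard Barnes $G$ Hankel evaluation, which also reproduces the sign.)

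\medskip
The main obstacle I expect is the bookkeeping in the determinant step: one must check that the staircase of row and column operations produces exactly the parameters $i+j+1$ and not some globally shifted version; I would confirm this by the explicit inductive description above, verified on small $s$. A secondary point requiring genuine care is that the Barnes $G$-function asymptotic must be carried far enough that the error is truly $o(1)$, not merely $O(\log N)$, so that the multiplicative constant is pinned down to exactly $1$. (An alternative to performing this asymptotic analysis by hand is to cite Forrester and Witte \cite{ForresterWitte}, or, for the relevant parameter specialization, the Riemann--Hilbert analysis of \cite{Basor0} together with \cite{Forrester2}, where the large-$N$ limit of this Laguerre determinant is established; this is the route behind the attribution in the statement.)
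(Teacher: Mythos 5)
Your argument is correct, and it follows a genuinely different route from the paper's. The paper keeps Winn's formula \eqref{LaguerreDeterminantExpression} but then cites the known large-$N$ asymptotics of the logarithmic derivative of the Laguerre determinant (Forrester--Hughes, and the Riemann--Hilbert analysis in \cite{Basor0}), combines this with $\frac{d}{dt}\log\phi_N^{(s)}\to\frac{d}{dt}\log\phi^{(s)}$ from the proof of Theorem \ref{zetainftypainleve}, and is then left with an undetermined multiplicative constant which it pins down by evaluating at $t=0$ via \cite[Corollary 1.5]{assiotispainleve}. You instead take the $N\to\infty$ limit of the exact finite-$N$ formula itself: the staircase of contiguity operations $L_n^{(\alpha-1)}=L_n^{(\alpha)}-L_{n-1}^{(\alpha)}$ does work as you describe (successive passes of row operations bring row $i$ to parameter $s+i$, then column passes bring entry $(i,j)$ to $i+j+1$; the degree index $N+s-1-i-j$ drops by exactly one between adjacent rows/columns, so each subtraction is again a single Laguerre polynomial --- I checked the bookkeeping and it is consistent), the entrywise limit $N^{-\alpha}L_n^{(\alpha)}(-t/N)\to t^{-\alpha/2}I_\alpha(2\sqrt t)$ with $n/N\to1$ is justified by the dominated-convergence bound you indicate, pulling $N^{-i-1/2}$ and $N^{-j-1/2}$ out of rows and columns accounts for the $N^{s^2}$, and the second difference of $\log G(N+\cdot+1)$ indeed gives $G(N+s+1)^2/\bigl(G(N+1)G(N+2s+1)\bigr)=N^{-s^2}(1+o(1))$ (the $N$-independent terms cancel exactly, as your computation requires). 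Since \eqref{eq:convergence2} gives pointwise convergence of the left-hand side, this yields \eqref{explicitphi} with the constant already in place; your closing Hankel evaluation $\det[1/(j+k+1)!]=(-1)^{s(s-1)/2}G(s+1)^2/G(2s+1)$ is also correct and provides an independent check at $t=0$. The trade-off: the paper's proof is shorter and delegates the analytic work to the literature, but needs the separate constant-fixing step and the non-vanishing results from Theorem \ref{zetainftypainleve}; your proof is longer but self-contained and elementary (no Riemann--Hilbert input, no external moment evaluation), at the cost of the determinant bookkeeping and of carrying the Barnes $G$ expansion to $o(1)$ precision, both of which you correctly flag and which do go through.
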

 
\begin{proof}[Proof of  Theorem \ref{thm:s=0}.]
We first seek to recover the density of $\mathbf{X}(s)$, where here and for the rest of the proof we assume $s\in \mathbb{N}$. Observe that using the bound (see \cite{Luke}): 
\begin{equation}
    I_n(t) \leq \frac{t^n}{2^n n!}e^t 
    \label{eq:besselbound}
\end{equation}
we see, by expanding the determinant in \eqref{explicitphi} as a sum over $\mathfrak{S}(s)$, that $\phi^{(s)}$ is in $L^1(\mathbb{R})$.
Therefore indeed, by Fourier inversion, we can obtain an expression for the density function $\rho^{(s)}(x)$ of $\mathbf{X}(s)$. Namely, we have that for $x \in \mathbb{R}$:
\begin{align*}
\rho^{(s)}(x) & = \frac{1}{2\pi}\int_{-\infty}^{\infty}e^{ixt}\phi^{(s)}(2t) dt \\ & = \frac{1}{\pi}\Re\left(\int_{0}^{\infty}e^{ixt}\phi^{(s)}(2t) dt \right),
\end{align*}
using the fact that the characteristic function $\phi^{(s)}(2t)$ is even. Now, using the expression \eqref{explicitphi}, we get:
\begin{equation*}
    \rho^{(s)}(x) =  \mathsf{V}^{(s)}\frac{1}{\pi}\Re\left(\int_{0}^{\infty}e^{(ix-1)t}\times\frac{\det\left[I_{j+k+1}\left(2\sqrt{2t}\right)\right]_{j,k=0,1,\ldots s-1}}{(2t)^{s^2/2}} dt\right),
\end{equation*}
where for brevity here and for the rest of the proof we will write:
\begin{equation*}
    \mathsf{V}^{(s)}=(-1)^{s(s-1)/2}\frac{G(2s+1)}{G(s+1)^2}.
\end{equation*}
Hence, expanding the determinant and using the substitution $t\mapsto t^2$ gives:
\begin{equation}
    \rho^{(s)}(x) =  \mathsf{V}^{(s)}\frac{1}{\pi}\Re\left(\sum_{\sigma \in \mathfrak{S}(s)}\sgn(\sigma)\int_{0}^{\infty}e^{(ix-1)t^2}\times\frac{\prod_{j=1}^s\left[I_{j+\sigma(j)-1}\left(2\sqrt{2}t\right)\right]}{2^{s^2/2-1}t^{s^2-1}} dt\right).
    \label{Besselprodintegral}
\end{equation}
Noting that modified Bessel functions of the first kind have the following expansion:
\begin{equation}
    I_\alpha(x)=\sum_{k=0}^\infty \frac{x^{2k+\alpha}}{k! \Gamma(k+\alpha+1)2^{2k+\alpha}},
\end{equation}
we can expand a finite product of modified Bessel functions of the first kind with integer parameters as follows:
\begin{equation*}
    \prod_{j=1}^s I_{\nu_j}(2t) = \sum_{k=0}^{\infty}\left[\sum_{k_1+\dots+k_s = k}\prod_{j=1}^s\frac{1}{k_j!(k_j+\nu_j)!}\right]t^{2k+\nu_1+\dots+\nu_s}.
\end{equation*}
Using this, we can simplify the expression for the density as
\begin{multline*}
    \rho^{(s)}(x) =  \mathsf{V}^{(s)}\frac{1}{\pi}\sum_{\sigma \in \mathfrak{S}(s)}\sgn(\sigma)\\
    \times \sum_{k=0}^{\infty}\left[\sum_{k_1+\dots+k_s = k}\prod_{j=1}^s\frac{1}{k_j!(k_j+j+\sigma(j)-1)!}\right]\Re\left(\int_{0}^{\infty}e^{(ix-1)t^2}\times\ 2^{k+1}t^{2k+1} dt\right),
\end{multline*}
because interchanging the sum and integral is justified as we explain next. Indeed, for each fixed permutation $\sigma\in \mathfrak{S}(s)$, we have, for all $x\in \mathbb{R}$, that:
\begin{equation}
    \int_0^\infty \sum_{k=0}^{\infty}\left[\sum_{k_1+\dots+k_s = k}\prod_{j=1}^s\frac{1}{k_j!(k_j+j+\sigma(j)-1)!}\right]\left|e^{(ix-1)t^2}\times\ 2^{k+1}t^{2k+1}\right| dt<\infty
    \label{eq:DCTrho}
\end{equation}
which can be seen by noting that $|e^{(ix-1)t^2}| = e^{-t^2}$ so that the integral in \eqref{eq:DCTrho} is equal to the integral in \eqref{Besselprodintegral} for a fixed $\sigma$ and $x=0$, which is finite by the bound for modified Bessel functions in \eqref{eq:besselbound}. Hence, by Fubini's theorem, we get the desired interchange of summation and integration. The remaining integrals are standard and an explicit evaluation yields:
\begin{equation}
    \rho^{(s)}(x) =  \mathsf{V}^{(s)}\frac{1}{2\pi}\sum_{\sigma \in \mathfrak{S}(s)}\sgn(\sigma)\sum_{k=0}^{\infty}\Re\left(\left[\sum_{k_1+\dots+k_s = k}\prod_{j=1}^s\frac{1}{k_j!(k_j+j+\sigma(j)-1)!}\right]k!\left(\frac{2}{1-ix}\right)^{k+1}\right).
\end{equation}
Finally, we can plug in the value of the constant and rewrite the sum over $\mathfrak{S}(s)$ as a determinant to obtain the expression in \eqref{eq:generalrho}. 

To compute the moments $\mathbb{E}\left(|\mathbf{X}(s)|^{2h}\right)$, and thus $\mathcal{R}(s,h)$ by the relation \eqref{eq:Rdefinition}, we temporarily restrict to $h \in (-\frac{1}{2},0)$. Now, for $h\in(-\frac{1}{2},0)$ we have, for all $k\geq 1$, that:
\begin{equation}
    \int_0^\infty \left|\Re\left(\frac{x^{2h}}{(1-ix)^{k+1}}\right)\right|dx\le\int_0^\infty \frac{x^{2h}}{\left({\sqrt{1+x^2}}\right)^{k+1}}dx\le\int_0^\infty \frac{x^{2h}}{{\sqrt{1+x^2}}}dx<\infty
    \label{eq:fubiniineq}
\end{equation}
and hence:
\begin{equation*}
    \sum_{k=0}^{\infty}\left(\left[\sum_{k_1+\dots+k_s = k}\prod_{j=1}^s\frac{1}{k_j!(k_j+j+\sigma(j)-1)!}\right]k!2^{k+1}\int_0^\infty \left|\Re\left(\frac{x^{2h}}{(1-ix)^{k+1}}\right)\right|\right)<\infty,
\end{equation*}
where the finiteness of the sum is seen by using the inequalities in \eqref{eq:fubiniineq} and comparing to the infinite sum for $\rho^{(s)}(0)$. Now, we simply apply Fubini's theorem to integrate the infinite series for $\rho^{(s)}(x)$ term-by-term, using the following evaluation from \cite[3.194.3]{IntegralsBook}:
\begin{equation*}
     \int_0^\infty\frac{x^{2h}}{(1-ix)^k}dx = -i\pi e^{i\pi h} \frac{(-2h)_{k-1}}{(k-1)!\sin(2\pi h)}.
\end{equation*}
Thus, we obtain the equality:
\begin{equation}
\cos(\pi h)\mathbb{E}\left(|\mathbf{X}(s)|^{2h}\right) = \mathsf{V}^{(s)}\left(\sum_{k=0}^\infty \left[\sum_{k_1+\ldots+k_s=k}\det\left[\frac{1}{(k_i+i+j-1)!}\right]_{i,j=1,\ldots,s}\prod_{j=1}^s \frac{1}{{k_j}!}\right] (-2h)_k 2^k\right) \label{eq:explicitE}
\end{equation}
for $h  \in (-\frac{1}{2},0)$. Now, letting $M$ be a compact subset of the strip $D:=\{z\in \mathbb{C}:  -\frac{1}{2} < \Re(z) < s+\frac{1}{2}\}$, and noting that there exists a positive constant $\alpha_M$ such that $|(-2h)_k|<\alpha_M k!$,  for all $h \in M$ whenever $k$ is large enough (this can be seen immediately via Stirling's approximation), we see that, by comparison with the sum in \eqref{eq:generalrho}, which converges for $x=0$, the infinite sum on the right hand-side of  \eqref{eq:explicitE} converges uniformly on compact subsets of $D$, an thus it is analytic in $h$ on $D$. Note also that: 
\begin{equation}
 \mathbb{E}\left(|\mathbf{X}(s)|^{2h}\right) = \int_{-\infty}^\infty |x|^{2h} \rho^{(s)}(x) dx 
\end{equation}
is an analytic function of $h$ for $h \in D$, as can be seen by a combination of Fubini's and Morera's theorems. More precisely, let $\Gamma$ be a closed path contained in $D$. Setting $\alpha_1 := \inf_{h \in \Gamma}\Re (h)>-\frac{1}{2}$ and $\alpha_2 := \sup_{h \in \Gamma}\Re (h) < s+\frac{1}{2}$, we see that $\left||x|^{2h}\right| \le |x|^{2\alpha_1}$ whenever $|x|\le1$ and $\left||x|^{2h}\right| \le |x|^{2\alpha_2}$ whenever $|x|\ge 1$. Therefore, we have that $\left||x|^{2h}\right| \le |x|^{2\alpha_1} + |x|^{2\alpha_2}$ for all $x \in \mathbb{R}$ and $h \in \Gamma$, so that:
\begin{equation*}
    \int_{\Gamma}\int_{-\infty}^\infty \left||x|^{2h} \rho^{(s)}(x)\right| dx dh \le \int_{\Gamma}\int_{-\infty}^\infty \left(|x|^{2\alpha_1}+ |x|^{2\alpha_2}\right) \rho^{(s)}(x) dx dh<\infty,
\end{equation*}
where the finiteness of the double integral is justified by the fact that $\Gamma$ is a finite length path, and that the inner integral is finite by the computation above when $\alpha_1, \alpha_2 \in (-\frac{1}{2},0)$, and by finiteness of $\mathbb{E}\left(|\mathbf{X}(s)|^{2\alpha_i}\right)$ for $\alpha_i \in [0,s+\frac{1}{2})$ (see \cite{assiotispainleve}). Hence, we have that: \begin{equation}
    \int_{\Gamma}\int_{-\infty}^\infty |x|^{2h} \rho^{(s)}(x) dx dh=\int_{-\infty}^\infty \int_{\Gamma} |x|^{2h} \rho^{(s)}(x) dh dx=0,
\end{equation}
where we have used Cauchy's theorem. Hence, by Morera's theorem, we see that $\mathbb{E}\left(|\mathbf{X}(s)|^{2h}\right)$ is an analytic function of $h$ on $D$.
Thus, it is now clear that $\cos(\pi h)\mathbb{E}\left(|\mathbf{X}(s)|^{2h}\right)$ and the infinite sum on the right-hand side of \eqref{eq:explicitE} are two analytic functions of $h$ for $h \in D$, which agree for $h\in(-\frac{1}{2},0)$, and so the equality \eqref{eq:explicitE} holds for all $h \in D$.

We note that  \eqref{eq:generalrho} and \eqref{eq:momentsgeneral} simplify to the corresponding expressions given in Theorem \ref{thm:s=0} for $s=1,2$, which can be seen immediately for $s=1$, and for $s=2$ by using Vandermonde's Identity\footnote{It is not clear whether such a combinatorial simplification exists for $s\ge 3$. We note that also Winn, when computing the half-integer moments in Section 7 of \cite{Winn}, observed that some combinatorial structures seem to break down for $s\ge 3$.}:
\begin{equation*}
    \sum_{k=0}^{n} {n\choose k} {s\choose t+k}={n+s\choose n+t}.
\end{equation*}
The proof of Theorem \ref{thm:s=0} is now complete.
\end{proof}
\begin{rmk}
Note that from the formula \eqref{eq:x1moments} we may immediately recover the previously known (see \cite{Zeta4}) value $\mathcal{R}(1,1) = \frac{1}{12}$. Note also that by the equality \eqref{eq:explicitE}, valid for $\Re(h) \in \left(-\frac{1}{2},s+\frac{1}{2}\right)$, and using that the moments $\mathbb{E}\left(|\mathbf{X}(s)|^{2h}\right)$ are finite for $\Re(h) \in \left(-\frac{1}{2},s+\frac{1}{2}\right)$ (as seen in the proof of Theorem \ref{thm:s=0}), we deduce the seemingly non-trivial fact that the sum on the right-hand side of \eqref{eq:explicitE} vanishes for all $h \in \mathbb{N} - \frac{1}{2}$ in this range. Therefore, calculations for the moments $\mathcal{R}(s,h)$ for $h \in \mathbb{N}-\frac{1}{2}$ can be performed by applying L'H\^opital's rule to the relevant formulae in Theorem \ref{thm:s=0}. For instance, the value $\mathcal{R}\left(1,\frac{1}{2}\right) = \frac{e^2-5}{4\pi}$ (as calculated by Winn in \cite{Winn}) can be recovered:
\begin{multline}
    \mathcal{R}\left(1,\frac{1}{2}\right)= -\frac{1}{2\pi} \lim\limits_{h \to \frac{1}{2}}\frac{d}{dh}\left(\sum_{k=0}^\infty \frac{(-2h)_k}{k!(k+1)!}2^k\right) \\ = -\frac{1}{2\pi}\left(-2+ \sum_{k=2}^\infty \frac{1}{(k-1)k(k+1)!}2^{k+1}\right) = \frac{e^2-5}{4\pi}, \label{eq:lhospital}
\end{multline}
where in the penultimate equality we have used the following limit formula, obtained via the product rule:
\begin{equation}
    \lim\limits_{h \to \frac{1}{2}} \frac{d}{dh}\left((-2h)_k\right)= \begin{cases}
    2(k-2)! & \text{if} \ k \ge 2, \\
    -2 & \text{if} \ k = 1, \\
    0 & \text{if} \ k = 0.
    \end{cases}
\end{equation}
The interchange of limits in \eqref{eq:lhospital} can be justified by standard arguments using uniform convergence. We also remark that the specific values of \eqref{eq:x1moments} for $h= \frac{-1}{4},\frac{1}{4},\frac{3}{4},$ and $\frac{5}{4}$, which had not been computed before, can be expressed as a combination of $I_0(1)$ and $I_1(1)$:
\begin{equation}
\begin{aligned}
\mathcal{R}\left(1,-\frac{1}{4}\right) &= 2e(I_0(1)-I_1(1)),\\
\mathcal{R}\left(1,\frac{1}{4}\right) &= \frac{e}{3}\left(-I_0(1)+3I_1(1)\right),\\
\mathcal{R}\left(1,\frac{3}{4}\right) &= \frac{e}{30}\left(5I_0(1)-9I_1(1)\right), \\
\mathcal{R}\left(1,\frac{5}{4}\right) &= \frac{e}{140}(5I_0(1)-3I_1(1)).
\end{aligned}
\end{equation}
These values are taken from known special values of the confluent hypergeometric function. Finally, from \eqref{eq:s2moment} we can immediately recover the previously known (see \cite[\S 6.2.]{Zeta4}) values $\mathcal{R}(2,1) = \frac{1}{720}$ and $\mathcal{R}(2,2)=\frac{1}{6720}$. Again using L'H\^opital's rule we can recover the half-integer values:
\begin{equation}
\begin{aligned}
    \mathcal{R}\left(2,\frac{1}{2}\right) &= \frac{7}{180\pi}\left(\frac{15}{7}- {}_3F_3\left[\begin{matrix}
   \frac{9}{2},1,1 \\
    3,6,7 \end{matrix} \ ; \ 8\right]\right),\\
    \mathcal{R}\left(2,\frac{3}{2}\right) &= \frac{11}{3360\pi}\left(-\frac{28}{33}+ {}_3F_3\left[\begin{matrix}
   \frac{13}{2},1,1 \\
    5,8,9 \end{matrix} \ ; \ 8\right]\right).
\end{aligned}
\end{equation}
These were previously calculated using Maple by Winn \cite[\S 6.2.]{Winn}, who used combinatorial expressions valid only for half-integer parameters.
\label{rmk:halfint}
\end{rmk}
\begin{rmk}
Using the fact that for fixed $s$, the series in \eqref{eq:momentsgeneral} vanishes for half-integer $h$ in the range $ (0,s+\frac{1}{2})$ and noting the known values for integer $h$ again in this range (see \cite[eq. (4-46)]{Basor0}), we can write $\mathcal{R}(s,h)$ as:
\begin{equation}
    \mathcal{R}(s,h)=\frac{G(s+1)^2}{G(2s+1)}2^{-2h}\frac{1}{\cos(\pi h)}\sum_{k=0}^\infty a_k(s) (-2h)_k
    \label{eq:rationalcoef}
\end{equation}
where $a_0(s),\ldots, a_{2m}(s)$ can be simplified to rational functions of $s$ so that this rational expression of the first $2m+1$ coefficients in \eqref{eq:momentsgeneral} is valid for all $s\geq m$. For instance, for $s\ge 2$ there exists an expansion of $\mathcal{R}(s,h)$ in the form \eqref{eq:rationalcoef}, where the first $5$ terms of the series are explicitly given as follows:
\begin{multline}
\mathcal{R}(s,h)= \frac{G(s+1)^2}{G(2s+1)}2^{-2h}\frac{1}{\cos(\pi h)} \\\times \left(1+(-2h)_1+\frac{4s^2-2}{4s^2-1}\frac{(-2h)_2}{2}+\frac{4s^2-4}{4s^2-1}\frac{(-2h)_3}{3!}+\frac{(4s^2-8)^2+2}{(4s^2-1)(4s^2-9)}\frac{(-2h)_4}{4!}+\dots\right).
\end{multline}
Note also that the first $3$ coefficients in the expansion above are correct for all $s\ge 1$, but the simplifications for $a_3(s), a_4(s)$ are only valid for $s\ge 2$. It is tempting to try to find an expansion in the form of \eqref{eq:rationalcoef} such that $a_0(s),\ldots, a_{2m+1}(s)$ are all rational functions of $s$, valid for all $s\geq m$ where $m$ is a positive integer. However, if one proves that $\lim_{h\to m+\frac{1}{2}}\mathcal{R}(m,h)=\infty$, which we expect to be true (we have verified this for $m=1,2,3,4$ by evaluating the right-hand side of \eqref{eq:explicitE} at $h=m+\frac{1}{2}$ for $m=1,2,3,4$ and finding that it did not vanish), but do not pursue further in this paper\footnote{We note that for finite $N$, one can show by expanding the Haar measure in \eqref{eq:jointmoments} that $\lim_{h \to m+\frac{1}{2}}\mathcal{R}_N(m,h) = \infty$; see \cite{combinatorial1}.}, then one can see that there cannot be an expansion in the form of \eqref{eq:rationalcoef} valid for all $s\geq m$ with $a_0(s),\ldots, a_{2m+1}(s)$ rational functions of $s$. Indeed if such an expansion existed, then we would have, for all integer $s>m$, that:
\begin{align*}
    0 &= \frac{G(2s+1)}{G(s+1)^2}2^{2m+1}\cos\left(\left(m+\frac{1}{2}\right)\pi\right)\mathcal{R}\left(s,m+\frac{1}{2}\right)\\ &=a_0(s)(-2m-1)_0+\ldots+a_{2m+1}(s)(-2m-1)_{2m+1},
\end{align*}
so that since $a_0(s)+\ldots+a_{2m+1}(s)(-2m-1)_{2m+1}$ is a rational function of $s$ with infinitely many zeros, it is zero identically. Using the fact that the sum on the right hand-side of \eqref{eq:momentsgeneral} is analytic on $\Re(h)>-\frac{1}{2}$, this would imply that $\mathcal{R}(m,m+\frac{1}{2})<\infty$, which contradicts the assumption that $\lim_{h\to m+\frac{1}{2}}\mathcal{R}(m,h)=\infty$ (which still remains to be proven for $m\ge5$); hence, we have a contradiction and such an expansion cannot exist.
\end{rmk}

\begin{proof}[Proof of Corollary \ref{cor:sineprocess}]
The sine process $\mathcal{S}$ with kernel given by \eqref{eq:sinekernel} is obtained from the process $\mathbf{C}^{(0)}$ under the mapping $x \mapsto \frac{1}{y}$, see \cite[Theorem I]{BorodinOlshanskiErgodic}. The result now follows immediately from the definition \eqref{eq:Xsdefinition} of $\mathbf{X}(s)$ and Theorem \ref{s0explicit}.
\end{proof}
\begin{proof}[Alternative proof of \eqref{explicitphi} in the case $s=1$]We use the following version of Aomoto's integral formula \cite{Selberg}:
\begin{equation}
    a_{N,k}^{(\alpha)} := \int_{[0,\infty]^N}\prod_{j=1}^ky_j \prod_{j=1}^Ny_j^{\alpha-1}e^{-y_j}\Delta(\mathbf{y})^2 d\mathbf{y} = C_N^{(\frac{\alpha-1}{2})} \times \prod_{j=1}^k (\alpha + N-j),
\end{equation}
where $C_N^{(s)}$ is defined as in \eqref{eq:constant}. Thus, expanding the factor $\prod_{j=1}^N\left(y_j+\frac{t}{N}\right)$ in \eqref{eq:laguerre} yields, for $t \ge 0$:
\begin{equation}
\phi_N^{(1)}(t) = e^\frac{-t}{2}\sum_{r=0}^{N} {N\choose r} \left(\frac{t}{N}\right)^r \times a_{N,N-r}^{(2)} \times \frac{1}{C_N^{(1)}} = e^\frac{-t}{2}\sum_{r=0}^{N} \frac{(N-r+1)_r}{N^r}\frac{1}{r!(r+1)!}t^r. \label{eq:phi1elementary}
\end{equation}
Hence, taking the limit $N \to \infty$ in the last equality of \eqref{eq:phi1elementary}, and using the fact that the functions $\phi_N^{(1)}(t)$ are even, yields:
\begin{equation}
\phi^{(1)}(t) = e^{-\frac{|t|}{2}}\sum_{r=0}^{\infty} \frac{1}{r!(r+1)!}|t|^r = e^{-\frac{|t|}{2}}\frac{I_1\left(2\sqrt{|t|}\right)}{\sqrt{|t|}},
\end{equation}
for all $t \in \mathbb{R}$.
\end{proof}

\end{section}
\begin{section}{The Sum of Inverse Points of the Bessel Process}
\label{InverseBessel}
In this section we establish a connection between the Laplace transform of the sum of inverse points of the Bessel point process and the $\sigma-$Painlev\'e III' equation, in analogy to Theorem \ref{zetainftypainleve}. Let $\nu>-1$ and recall that the Bessel point process (with parameter $\nu$), that we denote by $\mathcal{P}^{(\nu)}$, is the  determinantal point process on $(0,\infty)$ with infinitely many points, whose correlation kernel is given by, see \cite{Forrester1}, also \cite{TracyWidom}:
\begin{equation}
    \mathcal{K}^{(\nu)}(x,y)=\frac{\sqrt{x}J_{\nu+1}(\sqrt{x})J_{\nu}(\sqrt{y})-\sqrt{y}J_{\nu+1}(\sqrt{y})J_{\nu}(\sqrt{x})}{2(x-y)}
\end{equation}
where $J_{\nu}$ denotes the Bessel function with parameter $\nu$. The Bessel point process is a fundamental object which appears as the universal scaling limit of the eigenvalues of random matrices at the hard edge, see for example \cite{KuijlaarsVanlessen}, \cite{Lubinsky}, \cite{RiderWaters}.

Then, we define\footnote{The fact that $\mathbf{Y}(\nu)$ is almost surely finite follows from the results of \cite{HuaPickrell10}.}:
\begin{equation}
    \mathbf{Y}(\nu)=\sum_{x\in \mathcal{P}^{(\nu)}} \frac{1}{x}.
\end{equation}
The random variable $\mathbf{Y}(\nu)$ plays a similar role to $\mathbf{X}(s)$ in the ergodic decomposition of another distinguished family of unitarily invariant probability measures on $\mathbb{H}(\infty)$, the inverse Laguerre measures; more precisely it is equal in distribution with the parameter $\gamma_1$, see \cite{HuaPickrell10}. The main result of this section is the following:

\begin{thm}
Let $\nu > -1$. Define
\begin{equation*}
    h^{(\nu)}(t):=\frac{\nu^2}{4}+\xi^{(\nu)}(t) 
\end{equation*}
where
\begin{equation*}
    \xi^{(\nu)}(t):=t\frac{d}{dt}\log  \psi^{(\nu)}(t)
\end{equation*}
and 
\begin{equation*}
   \psi^{(\nu)}(t):=\mathbb{E}\left(e^{-4t\mathbf{Y}(\nu)}\right).
\end{equation*}
Then, $h^{(\nu)}(t)$ is $C^\omega$ on $(0,\infty)$ and is a solution to a special case of the $\sigma-$Painlev\'e III' equation with one parameter for $t\in (0,\infty)$:
\begin{equation}
    \left(t\frac{d^2 h^{(\nu)}}{dt^2}\right)^2=4\left(\frac{dh^{(\nu)}}{dt}\right)^2\left(h^{(\nu)}-t\frac{dh^{(\nu)}}{dt}\right)+2\nu\frac{dh^{(\nu)}}{dt}+1
\label{eq:xiinfpainleve}
\end{equation}
Furthermore, we have the following boundary conditions:
\begin{align}
\begin{cases}
    h^{(\nu)}(0) = \frac{\nu^2}{4}, & \text{for } \ \nu>0, \label{eq:xiInfbc1}\\ & \\
    \left.\frac{d}{dt} h^{(\nu)}(t)\right|_{t=0} = -\frac{1}{\nu}, & \text{for}\ \nu>1.
\end{cases}
\end{align}
\label{BesselThm}
\end{thm}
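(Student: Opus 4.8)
The plan is to follow exactly the same template used to prove Theorem \ref{zetainftypainleve}, namely to realize $\psi^{(\nu)}(t)$ as an $N\to\infty$ limit of finite-$N$ objects for which a Hankel determinant / Painlev\'e representation is available. First I would introduce the finite-$N$ analogues: let $\mathbf{L}_N^{(\nu)}$ be the inverse Laguerre ensemble on $\mathbb{R}^N/\mathfrak{S}(N)$ whose eigenvalue density is proportional to $\Delta(\mathbf{x})^2\prod_j x_j^{\nu-N}e^{-N/x_j}$ (the projection of the inverse Laguerre measure on $\mathbb{H}(\infty)$ onto the top-left $N\times N$ corner), so that $\sum_{i=1}^N x_i^{-1}$ converges in distribution to $\mathbf{Y}(\nu)$ by the results of \cite{HuaPickrell10}, just as $\frac{1}{N}\mathrm{Tr}(\mathbf{H}_N)\to\mathbf{X}(s)$ in the Hua-Pickrell case. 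Then I would set $\psi_N^{(\nu)}(t):=\mathbb{E}\big(e^{-4t\sum_i x_i^{-1}}\big)$ and, after the change of variables $y_j=1/x_j$, recognize the resulting integral as a Hankel determinant in a Laguerre-type weight of the form $y^{\alpha}e^{-y}$ deformed by a factor $e^{-c t/y}$ or, equivalently after a further substitution, a weight falling under a known $\sigma$-Painlev\'e III' classification. The cleanest route is to massage $\psi_N^{(\nu)}$ into the form $F_N(\cdot;\cdot)=\det[\int_0^\infty y^{j+k}w(y)dy]$ for a weight to which Proposition \ref{prop:HankelPainleve1} (or a companion result of Forrester–Witte / Chen et al. for the $e^{-t/y}y^\alpha e^{-y}$ weight) applies, giving a finite-$N$ $\sigma$-Painlev\'e V equation; alternatively one may directly compute with the Bessel-kernel Fredholm determinant, but the finite-$N$ route parallels the existing proof most closely.

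Second, I would carry out the analytic-continuation step that mirrors Proposition \ref{prop:analytic}: show that $\psi_N^{(\nu)}$ and $\psi^{(\nu)}$ extend holomorphically to a right half-plane (here $\{\Re z>0\}$ is natural since $e^{-4z\mathbf{Y}(\nu)}$ with $\mathbf{Y}(\nu)>0$ a.s.\ gives an obviously bounded integrand for $\Re z>0$), that the family $\{\psi_N^{(\nu)}\}$ is uniformly bounded on compacts (immediate from $0\le \psi_N^{(\nu)}(z)\le 1$ for $\Re z>0$), and hence by Montel plus Morera that $\psi_N^{(\nu)}\to\psi^{(\nu)}$ together with all derivatives, uniformly on compacts. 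One must also verify $\psi_N^{(\nu)}>0$ and $\psi^{(\nu)}>0$ on $(0,\infty)$ so that $\xi^{(\nu)}$ and $h^{(\nu)}$ are well defined and real-analytic; positivity of $\psi_N^{(\nu)}$ is clear from the integral representation, and positivity of the limit follows because a Laplace transform of a positive random variable never vanishes on $(0,\infty)$. Passing to the limit in the finite-$N$ $\sigma$-Painlev\'e equation (whose extra terms should be $O(1/N)$ relative to the leading ones, exactly as the $t^2/N^2$, $2s/N$ terms in \eqref{eq:painleve5} drop out) then yields the limiting equation \eqref{eq:xiinfpainleve}.

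Third, for the boundary conditions I would argue as in the proof of Theorem \ref{zetainftypainleve}: for $\nu>0$ one has $h^{(\nu)}(0)=\tfrac{\nu^2}{4}$ because $\xi^{(\nu)}(0)=0$, which follows once one knows $\psi^{(\nu)}$ is $C^1$ up to $0$ with $\psi^{(\nu)}(0)=1$ and $(\psi^{(\nu)})'(0)$ finite — this needs a moment/uniform-integrability input, namely that $\mathbb{E}(\mathbf{Y}(\nu))<\infty$ for $\nu>0$ (so $-\tfrac{d}{dt}\psi^{(\nu)}|_0 = 4\mathbb{E}(\mathbf{Y}(\nu))$ exists) and that the corresponding finite-$N$ sums $\sum x_i^{-1}$ are uniformly integrable, exactly the analogue of \cite[Prop.\ 2.10]{assiotispainleve} and Proposition \ref{prop:uniformInt}. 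Similarly the second boundary condition $\tfrac{d}{dt}h^{(\nu)}|_0 = -1/\nu$ for $\nu>1$ should come from $\mathbb{E}(\mathbf{Y}(\nu)) = \tfrac{1}{4(\nu-?)}$ — more precisely from the explicit first moment of the Bessel point process sum of inverse points, which one computes from $\int_0^\infty \tfrac{1}{x}\mathcal{K}^{(\nu)}(x,x)dx$, requiring $\nu>1$ for convergence. So the boundary data reduce to two explicit one-dimensional integrals.

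The main obstacle, I expect, is locating and correctly normalizing the finite-$N$ Hankel-determinant Painlev\'e identity: the weight arising after inverting the points, roughly $y^{\nu-N}e^{-N/y}$ on $(0,\infty)$ or its reciprocal-substituted form $y^{\alpha}e^{-y}e^{-t/y}$, is a genuine deformation and one needs the precise $\sigma$-form of its associated Painlev\'e equation together with the dependence of its coefficients on $N$, so that the limiting equation is exactly \eqref{eq:xiinfpainleve} with the stated single parameter $\nu$ and no spurious constants. This is the analogue of Proposition \ref{prop:HankelPainleve1} and likely requires either citing the appropriate case from Forrester–Witte / Chen–Its-type results or redoing the ladder-operator computation for this weight; as in the main theorem one may additionally need a short analytic-continuation argument in $\nu$ (or in the exponent $\alpha$) to cover the full range $\nu>-1$ rather than only $\nu>0$. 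A secondary technical point is justifying that the $N$-dependent correction terms in the finite-$N$ equation vanish in the limit, which should be routine once the finite-$N$ equation is in hand.
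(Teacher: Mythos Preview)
Your proposal is essentially the same as the paper's proof. The paper indeed builds $\psi_N^{(\nu)}$ from the LUE (equivalently, inverse Laguerre) eigenvalues, recognizes the resulting Hankel determinant as the one for the singularly perturbed Laguerre weight $y^{\nu}e^{-y-t/y}$, and invokes the Chen--Its result \cite{HankeldetLagu} (extended from $\nu>0$ to $\nu>-1$ by the same analytic-continuation-in-$\nu$ trick as in Proposition~\ref{prop:HankelPainleve1}) for the finite-$N$ $\sigma$-Painlev\'e equation; the Montel/Morera argument and the passage to the limit are exactly as you sketch.

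Two minor points where your write-up drifts from the paper. First, for the boundary conditions the paper does \emph{not} compute $\int_0^\infty x^{-1}\mathcal{K}^{(\nu)}(x,x)\,dx$ directly; instead it uses an exchangeable sequence of Inverse-Gamma$(\nu+1,2)$ random variables (Proposition~\ref{prop:exchange}, the analogue of \cite[Proposition~2.11]{assiotispainleve}) to evaluate $\mathbb{E}\big(\sum_j \tfrac{1}{Nx_j}\big)=\tfrac{1}{\nu}$ at finite $N$, and then passes to the limit via uniform integrability. Second, your explanation of why the derivative condition needs $\nu>1$ is off: the integral $\int_0^\infty x^{-1}\mathcal{K}^{(\nu)}(x,x)\,dx$ (the first moment) already converges for all $\nu>0$; the restriction $\nu>1$ enters because one needs the \emph{second} moment finite so that $\psi^{(\nu)}$ is $C^2$ up to $0$ and $\xi^{(\nu)}$ is genuinely $C^1$ at the origin---equivalently, so that the uniform-integrability argument applies to $\big(\sum_j \tfrac{1}{Nx_j}\big)^r$ for some $r>1$.
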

Before discussing some of the relevant literature we give a brief outline of the strategy of proof which is similar to the one of Theorem \ref{zetainftypainleve}. Namely, we will consider the Laplace transforms of a sequence of random variables that converge, in distribution, to $\mathbf{Y}(\nu)$. We begin with some preliminaries.

Let $\mathbf{M}_N$ be an $N\times N$ random matrix taken from the Laguerre Unitary Ensemble (LUE) with parameter $\nu>-1$, having law:
\begin{align*}
 \text{const} \cdot \det(\mathbf{H})^{\nu} \exp\left(-\text{Tr}(\mathbf{H})\right)\mathbb{1}_{\{\mathbf{H}\in \mathbb{H}_{+}(N)\}}
 d\mathbf{H}
\end{align*}
where $d\mathbf{H}$ is the Lebesgue measure on $\mathbb{H}(N)$ and $\mathbb{H}_{+}(N)$ denotes the space of $N\times N$ positive-definite Hermitian matrices, and the constant is chosen so that this is a probability measure on $\mathbb{H}_+(N)$.
Then, the eigenvalues of $\mathbf{M}_N$ are distributed according to the following probability measure on $\mathbb{R}_{+}^N/\mathfrak{S}(N)$:
\begin{equation}
     \frac{1}{\widetilde{C}_N^{(\nu)}}\cdot{\Delta(\mathbf{x})}^2 \prod _{j=1}^{N} {x_j}^{\nu}e^{-x_j}dx_j
     \label{LUEeig}
\end{equation}
where:
\begin{equation}
    \widetilde{C}_N^{(\nu)}=\prod_{j=1}^N \Gamma(j)\Gamma(\nu+j).
\end{equation}
Via the transformation $\mathbf{M}_N \mapsto \frac{2}{\mathbf{M}_N}$, the LUE is transformed to the inverse Laguerre ensemble, whose eigenvalue distribution is given by the following probability measure on $\mathbb{R}_{+}^N/\mathfrak{S}(N)$:
\begin{equation}
\frac{1}{E_N^{(\nu)}}\cdot{\Delta(\mathbf{y})}^2 \prod _{j=1}^{N} {y_j}^{-\nu-2N}e^{-\frac{2}{y_j}}dy_j 
\label{inverseeig}
\end{equation}
with:
\begin{equation}
    E_N^{(\nu)}=\prod_{j=1}^N -\frac{\left((j-\nu-2N)_{j-1}\right)^{2}(2j-\nu-2N-1)}{2^{2j-\nu-2N-1}\Gamma\left(-j+\nu+2N+1\right)(j-1)!},
\end{equation}
where $(a)_j$ denotes the Pochhammer symbol given by $(a)_j:=\prod_{i=1}^{j}(a+i-1), \ (a)_0:=1$.

Then, similarly to the Hua-Pickrell case, by a combination of the results from \cite{BorodinOlshanskiErgodic} (the existence of the limit) and \cite{HuaPickrell10} (the identification of the limit with $\mathbf{Y}(\nu)$) we have that:
\begin{equation}
   \sum_{i=1}^N \frac{2}{Nx_i}  \overset{d}{=}\sum_{j=1}^N \frac{y_j}{N} \xrightarrow[N\rightarrow\infty]{d} 8\mathbf{Y}(\nu).
\end{equation}
where $\left(x_1,x_2, \ldots, x_N\right)$ are distributed according to the probability measure in \eqref{LUEeig}, whereas $\left(y_1,y_2,\ldots, y_N\right)$ are distributed according to the probability measure in \eqref{inverseeig}.
Then, if we let
\begin{equation*}
     \psi_{N}^{(\nu)}(t)=\mathbb{E}\left(e^{-t\sum_{j=1}^N\frac{1}{Nx_j}}\right),
\end{equation*}
where the expectation is taken with respect to probability measure in \eqref{LUEeig}, we have that:
\begin{equation}
     \psi_{N}^{(\nu)}(t) \xrightarrow{N\rightarrow\infty} \psi^{(\nu)}(t)
\end{equation}
for all $t\in [0,\infty)$, with the convergence being uniform on compacts. Finally, we define:
\begin{equation*}
   \xi_{N}^{(\nu)}(t)=t\frac{d}{dt}\log \psi_{N}^{(\nu)}(t).
\end{equation*}

Now, regarding the relevant literature, as far as we are aware, the first time a connection between $\psi_N^{(\nu)}$ and Painlev\'e equations was established was in the physics literature \cite{OsipovKanzieper} by Osipov and Kanzieper. Then, Chen and Its established this result rigorously in \cite{HankeldetLagu} using the ladder operator method and an alternative proof from the viewpoint of integrable systems theory was given by Mezzadri and Simm in \cite{MezzadriSimm}. Later, a detailed study of the large $N$ limit of $\psi_N^{(\nu)}$ for $\nu>0$ (without the identification of the limit with the Laplace transform $\psi^{(\nu)}$ of $\mathbf{Y}(\nu)$ proven in \cite{HuaPickrell10}) was performed using Riemann-Hilbert problem methods in \cite{Xu1}, \cite{Xu2}, see also Section 3 in \cite{DaiForresterXu}. It would be possible, for $\nu>0$, to extract the required ingredients we need from \cite{Xu1}, \cite{Xu2}, \cite{DaiForresterXu} so that along with the discussion above we establish Theorem \ref{BesselThm} (for $\nu>0$). However, since we want to cover the entire range of parameter values, $\nu>-1$, we instead adapt the relatively short and elementary arguments given in Section 2 to the present setting.

Our starting point is the following proposition, the analogue of Proposition \ref{zeta_Npainleve}, which as mentioned above is essentially due to the works \cite{HankeldetLagu}, \cite{MezzadriSimm}, \cite{OsipovKanzieper}. Namely, by making the change of variables $t\mapsto tN$ and some algebraic manipulations it reduces to Theorem 3 of \cite{HankeldetLagu}. This theorem in \cite{HankeldetLagu} is stated only for $\nu>0$ \footnote{For the same reasons explained in the discussion before the statement of Proposition \ref{prop:HankelPainleve1}.} but it can be extended to $\nu>-1$ via an analytic continuation argument identical to the one presented in the proof of Proposition \ref{prop:HankelPainleve1}. For an alternative approach to this result see \cite{MezzadriSimm} and also the physics paper \cite{OsipovKanzieper}.
\begin{prop}
Let $\nu >-1$. Then, for $t\in (0,\infty)$, $\xi_{N}^{(\nu)}(t)$ is a solution to a particular Painlev\'e equation:
\begin{equation}
    \left(t\frac{d^2\xi_{N}^{(\nu)}}{dt^2}\right)^2=-4t\left(\frac{d\xi_{N}^{(\nu)}}{dt}\right)^3+\left(\nu^2+4\xi_{N}^{(\nu)}+\frac{4t}{N}\right)\left(\frac{d\xi_{N}^{(\nu)}}{dt}\right)^2+\left(2\nu-\frac{4}{N}\xi_{N}^{(\nu)}(t)\right)\frac{d\xi_{N}^{(s)}}{dt}+1.
    \label{eq:painlevexi}
\end{equation}

\end{prop}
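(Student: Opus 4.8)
The plan is to mirror the proof of Proposition \ref{zeta_Npainleve} essentially verbatim, replacing Winn's identity and the Laguerre/Hua-Pickrell bookkeeping by the corresponding computation for the inverse Laguerre ensemble. First I would write out $\psi_N^{(\nu)}(t)$ explicitly as a multidimensional integral over the eigenvalue measure \eqref{LUEeig}, i.e.
\begin{equation*}
\psi_N^{(\nu)}(t)=\frac{1}{\widetilde{C}_N^{(\nu)}}\int_{(0,\infty)^N}e^{-\frac{t}{N}\sum_j \frac{1}{x_j}}\,\Delta(\mathbf{x})^2\prod_{j=1}^N x_j^{\nu}e^{-x_j}\,dx_j,
\end{equation*}
and then perform the change of variables $x_j\mapsto 2/y_j$ (or work directly with the inverse Laguerre measure \eqref{inverseeig}) to recognise this, up to the normalisation constant and a rescaling of $t$, as a Hankel determinant of a deformed Laguerre-type weight of the form $w(y;\tilde t,\alpha)=(y+\tilde t)^\lambda y^\alpha e^{-y}$ — here one should obtain the identification $\alpha=\nu$ (or a shift thereof) and a specific value of $\lambda$ after the substitution, with $\tilde t$ proportional to $t/N$. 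Applying the Andréief identity then turns $\psi_N^{(\nu)}$ into a determinant $F_N(\tilde t;\alpha)$ exactly of the type appearing in Proposition \ref{prop:HankelPainleve1}, and $\xi_N^{(\nu)}(t)$ becomes an affine function of $t\frac{d}{dt}$ of the logarithm of that determinant, i.e. of $H_N$.

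Next I would substitute the particular parameter values dictated by the inverse Laguerre weight into the $\sigma$-form \eqref{eq:hankelchen} of Proposition \ref{prop:HankelPainleve1}. This is a purely algebraic step: carry through the affine change $H_N(\tilde t;\alpha)=(\text{linear in }\xi_N^{(\nu)},t,N)$ together with the rescaling $\tilde t\mapsto t/N$, and collect terms; one verifies that \eqref{eq:hankelchen} becomes precisely \eqref{eq:painlevexi}. As in the Hua-Pickrell case, one should also note that $\psi_N^{(\nu)}$ is defined and strictly positive on all of $[0,\infty)$ (it is a Laplace transform of a non-negative random variable, and the integrand is positive), so $\xi_N^{(\nu)}$ is well-defined and the ODE holds for $t\in(0,\infty)$. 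This establishes the proposition. (Since the excerpt cuts off at the statement, the proof body to be supplied is exactly this reduction.)

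For completeness — although strictly the proposition only asks for the finite-$N$ ODE — the route to Theorem \ref{BesselThm} afterwards would be the analogue of the proof of Theorem \ref{zetainftypainleve}: establish an analytic-continuation statement for $\psi_N^{(\nu)}$ and $\psi^{(\nu)}$ to a right half-plane (Morera + Montel, using $0\le\psi_N^{(\nu)}(z)\le 1$ for $\Re z\ge 0$ and uniform boundedness on compacts, plus uniform-on-compacts convergence $\psi_N^{(\nu)}\to\psi^{(\nu)}$), deduce convergence of all derivatives and hence of $\frac{d^p}{dt^p}\xi_N^{(\nu)}\to\frac{d^p}{dt^p}\xi^{(\nu)}$ on $(0,\infty)$, pass to the limit $N\to\infty$ in \eqref{eq:painlevexi} (the $1/N$ terms drop out, leaving \eqref{eq:xiinfpainleve} after the shift $h^{(\nu)}=\nu^2/4+\xi^{(\nu)}$), obtain real-analyticity of $\xi^{(\nu)}$ on $(0,\infty)$ from positivity of $\Re\psi^{(\nu)}$ near $(0,\infty)$, and finally get the boundary conditions from uniform integrability of $\big\{\big(\sum_i \frac{1}{Nx_i}\big)^r\big\}$ for $r<\nu+1$ (so that $\psi_N^{(\nu)}$ and enough of its derivatives converge at $t=0$) together with the known finite-$N$ values of $\xi_N^{(\nu)}(0)$ and its derivative.

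The main obstacle is the algebraic identification in the second step: one must pin down the exact correspondence between the parameters $(\alpha,\lambda,\tilde t)$ of the deformed Laguerre weight in Proposition \ref{prop:HankelPainleve1} and $(\nu,t,N)$ here, including the precise affine map relating $H_N$ to $\xi_N^{(\nu)}$ and the exact rescaling of the deformation variable, so that \eqref{eq:hankelchen} collapses cleanly onto \eqref{eq:painlevexi} rather than onto a superficially different equation. Getting the constants ($2$'s, factors of $N$, the shift by $\nu^2/4$, the sign of $\lambda$) consistent throughout is where the care is needed; everything else is a routine transcription of the Section 2 arguments. (The secondary subtlety, as in Proposition \ref{prop:HankelPainleve1}, is that Theorem 3 of \cite{HankeldetLagu} is stated only for $\nu>0$; this is handled by the identical analytic-continuation-in-$\alpha$ argument used in the proof of Proposition \ref{prop:HankelPainleve1}, since $\alpha\mapsto\int_0^\infty y^{\alpha}(y+\tilde t)^{\lambda}e^{-y}dy$ is holomorphic on $\Re\alpha>-1$ and $F_N(\tilde t;\alpha)>0$ there.)
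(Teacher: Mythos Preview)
Your proposal contains a genuine gap at the very first step. The change of variables $x_j\mapsto 2/y_j$ applied to the singularly perturbed Laguerre integrand
\[
e^{-\frac{t}{N}\sum_j \frac{1}{x_j}}\,\Delta(\mathbf{x})^2\prod_j x_j^{\nu}e^{-x_j}
\]
does \emph{not} produce a weight of the form $(y+\tilde t)^{\lambda}y^{\alpha}e^{-y}$. Carrying it through gives (up to constants) the weight $y^{-\nu-2N}e^{-2/y-ty/(2N)}$, which still carries the essential singularity $e^{-2/y}$; no affine or scaling substitution removes it. In the Hua--Pickrell case the reduction to the polynomial-type deformation $(y+t)^{\lambda}y^{\alpha}e^{-y}$ works only because of Winn's integral identity \eqref{eq:winn0}, and there is no analogue of that identity here. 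Consequently Proposition~\ref{prop:HankelPainleve1} (equivalently Chen--McKay \cite{Hankeldet}) is simply not applicable, and the ``purely algebraic'' identification you describe cannot be carried out.

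The paper's route is different and more direct: it recognises that $\psi_N^{(\nu)}(t)$ is, after the rescaling $t\mapsto tN$, exactly the Hankel determinant for the \emph{singularly} perturbed Laguerre weight $x^{\nu}e^{-x-s/x}$ studied by Chen--Its in \cite{HankeldetLagu}, and quotes Theorem~3 of that paper (a separate ladder-operator computation, not the one behind Proposition~\ref{prop:HankelPainleve1}) to obtain the $\sigma$-form ODE. The only additional work is the extension from $\nu>0$ to $\nu>-1$, which is indeed done by the same analytic-continuation-in-$\alpha$ argument you mention at the end. You seem to have partially noticed the right reference in your final parenthetical, but your main plan routes through the wrong Hankel determinant. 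The rest of your outline (analytic continuation, Montel, convergence of derivatives, passage to the limit, boundary conditions via uniform integrability) is correct and matches the paper's proof of Theorem~\ref{BesselThm}.
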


Now, we prove a series of propositions that will allow us to deduce Theorem \ref{BesselThm}.

\begin{prop}
Let $\nu>-1$. Then, there exist holomorphic functions  $g_N$ for $N=1,2,3,\dots$ and $g$ on $\{ z \in \mathbb{C}: \Re z  > 0 \}$, with $g_N(0) = g(0)=1$, such that
\begin{align}
    \psi_N^{(\nu)} = \left.g_N\right|_{[0,\infty)} && \text{and} &&  \psi^{(\nu)} = \left.g\right|_{[0,\infty)}.
\end{align}
Moreover, for $\Re z > 0, \ p=1,2,\dots$ we have:
\begin{equation}
\lim\limits_{N\to\infty} \frac{d^p}{dz^p}g_{N}(z) = \frac{d^p}{dz^p}g(z).
\end{equation}
\label{Laguerreanalytic}
\end{prop}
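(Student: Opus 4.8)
The plan is to follow the proof of Proposition~\ref{prop:analytic} almost line by line, the point being that for a genuine Laplace transform (rather than a characteristic function) the compactness estimate needed for Montel's theorem is immediate and requires no case distinction on the sign of $\nu$. For $z$ in the open right half-plane $\{z\in\mathbb{C}:\Re z>0\}$, define
\begin{equation*}
g_N(z):=\frac{1}{N!\,\widetilde{C}_N^{(\nu)}}\int_{(0,\infty)^N}e^{-\frac{z}{N}\sum_{j=1}^N\frac{1}{x_j}}\,\Delta(\mathbf{x})^2\prod_{j=1}^N x_j^{\nu}e^{-x_j}\,d\mathbf{x},
\end{equation*}
that is, $g_N(z)=\mathbb{E}\bigl(e^{-\frac{z}{N}\sum_{j=1}^N 1/x_j}\bigr)$ with the expectation taken against the eigenvalue law \eqref{LUEeig}. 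Since $x_j>0$ and $\Re z>0$ we have the pointwise bound $\bigl|e^{-\frac{z}{N}\sum_j 1/x_j}\bigr|\le 1$, so the integrand is dominated, uniformly for $z$ in the right half-plane, by the integrable function $\Delta(\mathbf{x})^2\prod_j x_j^{\nu}e^{-x_j}$ — integrability near the origin being exactly where $\nu>-1$ is used. Hence $g_N$ is well defined, $g_N(t)=\psi_N^{(\nu)}(t)$ for $t\in[0,\infty)$, and $g_N(0)=1$.

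The analyticity of $g_N$ on $\{\Re z>0\}$ follows from Morera's theorem exactly as in the proof of Proposition~\ref{prop:analytic}: dominated convergence gives continuity in $z$, and for any closed contour $\gamma$ contained in the right half-plane, Fubini's theorem applied with the above dominating function, together with Cauchy's theorem applied to the inner integral (which is entire in $z$), gives $\int_\gamma g_N(z)\,dz=0$. Moreover the bound $|g_N(z)|\le g_N(0)=1$ holds for \emph{all} $z$ with $\Re z>0$ and all $N\ge1$, so $\{g_N\}_{N\ge1}$ is uniformly bounded on compacta; by Montel's theorem it is a normal family. Every subsequence therefore has a sub-subsequence converging locally uniformly to a holomorphic limit, and any such limit agrees on $(0,\infty)$ with $\lim_{N\to\infty}\psi_N^{(\nu)}=\psi^{(\nu)}$, which is the convergence recorded just before the statement. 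Since holomorphic functions agreeing on a set with an accumulation point coincide, all subsequential limits are equal, hence the full sequence $\{g_N\}$ converges locally uniformly to a single holomorphic function $g$ with $g|_{[0,\infty)}=\psi^{(\nu)}$ and $g(0)=1$. Finally, uniform-on-compacta convergence of holomorphic functions forces convergence of all derivatives (by the Cauchy integral formula), giving $\frac{d^p}{dz^p}g_N(z)\to\frac{d^p}{dz^p}g(z)$ for every $p\ge1$.

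I do not expect a genuine obstacle: the two points that need a little care are (i) exhibiting the single dominating function $\Delta(\mathbf{x})^2\prod_j x_j^{\nu}e^{-x_j}$ and checking it is integrable on $(0,\infty)^N$ precisely because $\nu>-1$ (this legitimises both the dominated-convergence and the Fubini step, uniformly on compacta of the right half-plane), and (ii) reading the assertion ``$g_N(0)=g(0)=1$'' correctly, namely as a statement about the boundary values of the Laplace transforms $\psi_N^{(\nu)},\psi^{(\nu)}$ at the boundary point $0$ of the domain, rather than about an evaluation of $g_N,g$ at an interior point. Beyond this, the argument is a verbatim adaptation of the proof of Proposition~\ref{prop:analytic}, and is in fact slightly shorter because the uniform bound $|g_N|\le1$ is available without splitting into the ranges $\nu>0$ and $-1<\nu\le0$.
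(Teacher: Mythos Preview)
Your proof is correct and follows exactly the approach of the paper, which simply refers back to the proof of Proposition~\ref{prop:analytic} together with the single observation that $\bigl|e^{-z/(Nx)}\bigr|<1$ uniformly for $\Re z>0$, $x>0$ and $N\ge1$. Your write-up just makes this reference explicit, including your correct remark that no case split on the sign of $\nu$ is needed here.
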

\begin{proof}
We use the same sequence of arguments we used in the proof of Proposition \ref{prop:analytic} by noting that $\left|e^{-\frac{z}{Nx}}\right|<1$ uniformly on $\{z \in \mathbb{C} : \Re z > 0 \}$, $ x>0$ and $N\geq 1$.
\end{proof}
\begin{prop}
Let $\nu>-1$. Then, there exists an exchangeable sequence of random variables $\left\{\mathsf{e}_i\right\}_{i=1}^\infty$\footnote{The sequence $\left\{\mathsf{e}_i\right\}_{i=1}^\infty$ is simply given by the diagonal elements of an infinite inverse Laguerre distributed random matrix with parameter $\nu$ on $\mathbb{H}(\infty)$. This probability measure on $\mathbb{H}(\infty)$ was constructed in \cite{HuaPickrell10}. } having the following Inverse-Gamma distribution on $(0, \infty)$:
\begin{equation}
    \frac{2^{\nu+1}}{\Gamma(\nu+1)}x^{-\nu-2}e^{-\frac{2}{x}}dx,
    \label{eq:lawe}
\end{equation}
such that 
\begin{equation}
 \sum_{i=1}^N \frac{2}{x_i} 
 \overset{d}{=}
 \sum_{i=1}^N y_i
 \overset{d}{=}
 \sum_{i=1}^N \mathsf{e}_i, \ \ \text{for \ all} \ N\ge 1, \end{equation}
where $\left(x_1,x_2, \ldots ,x_N\right)$ are distributed according to the probability measure \eqref{LUEeig} while $\left(y_1,y_2,\ldots ,y_N\right)$ are distributed according to the probability measure \eqref{inverseeig}.
\begin{proof}
We effectively follow the proof in \cite[Proposition 2.11]{assiotispainleve} verbatim, using the analogous results on the inverse Laguerre ensemble established in \cite{HuaPickrell10}, and noting that \eqref{eq:lawe} is the law of $y_1$ in the case $N=1$.
\end{proof}
\label{prop:exchange}
\end{prop}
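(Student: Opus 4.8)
The plan is to realise all the partial sums $\sum_{i=1}^N y_i$ simultaneously as traces of nested corners of one infinite random matrix, and then to read off exchangeability from the unitary invariance of that matrix; this is the inverse-Laguerre transcription of \cite[Proposition 2.11]{assiotispainleve}. First I would invoke from \cite{HuaPickrell10} the infinite inverse-Laguerre measure on $\mathbb{H}(\infty)$: a $\mathbb{U}(\infty)$-conjugation-invariant probability measure whose push-forward under the corner projection $\mathbb{H}(\infty)\twoheadrightarrow\mathbb{H}(N)$ has eigenvalue distribution \eqref{inverseeig} for every $N\ge1$. (This coherence of the ensembles \eqref{inverseeig} under the corner maps $\mathbb{H}(N+1)\twoheadrightarrow\mathbb{H}(N)$ is the analogue for the inverse Laguerre ensemble of the Hua--Pickrell coherence recalled in Section \ref{preliminaries}, and is exactly what is established in \cite{BorodinOlshanskiErgodic}, \cite{HuaPickrell10}.) Let $\mathbf{L}_\infty\in\mathbb{H}(\infty)$ be a random matrix with this law, let $\mathbf{L}_\infty^{(N)}$ denote its top left $N\times N$ submatrix, and set $\mathsf{e}_i:=(\mathbf{L}_\infty)_{ii}$, so that by definition $\{\mathsf{e}_i\}_{i=1}^\infty$ is the sequence in the statement.

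Granting this, the three distributional identities follow quickly. The first, $\sum_{i=1}^N\frac{2}{x_i}\overset{d}{=}\sum_{i=1}^N y_i$, is immediate from the transformation $\mathbf{M}_N\mapsto 2\mathbf{M}_N^{-1}$ recalled before \eqref{inverseeig}: it sends a vector $(x_1,\dots,x_N)$ with law \eqref{LUEeig} to $(2/x_1,\dots,2/x_N)$, whose law is \eqref{inverseeig}. The second, $\sum_{i=1}^N y_i\overset{d}{=}\sum_{i=1}^N\mathsf{e}_i$, holds because the eigenvalues of $\mathbf{L}_\infty^{(N)}$ have law \eqref{inverseeig} by construction, so $\sum_{i=1}^N y_i\overset{d}{=}\mathrm{Tr}\,\mathbf{L}_\infty^{(N)}=\sum_{i=1}^N\mathsf{e}_i$, and crucially this holds with one and the same infinite sequence $\{\mathsf{e}_i\}_{i\ge1}$ for every $N$, which is the whole reason for passing to $\mathbb{H}(\infty)$. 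Exchangeability of $\{\mathsf{e}_i\}_{i\ge1}$ follows from the $\mathbb{U}(\infty)$-invariance: for a finite permutation $\sigma$ the associated permutation matrix $P_\sigma$ lies in $\mathbb{U}(\infty)$, and $P_\sigma\mathbf{L}_\infty P_\sigma^{-1}$ has the same law as $\mathbf{L}_\infty$ while its diagonal is $\{\mathsf{e}_{\sigma^{-1}(i)}\}_{i\ge1}$; hence the joint law of $\{\mathsf{e}_i\}_{i\ge1}$ is invariant under all finite permutations. Finally, the marginal law of $\mathsf{e}_1$ is the $N=1$ instance of \eqref{inverseeig}, i.e. the probability measure on $(0,\infty)$ proportional to $y^{-\nu-2}e^{-2/y}\,dy$; the substitution $u=2/y$ gives $\int_0^\infty y^{-\nu-2}e^{-2/y}\,dy=2^{-\nu-1}\Gamma(\nu+1)$, so the normalised density is exactly \eqref{eq:lawe}.

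I do not expect a genuinely hard step here; the proof is routine once the right object is in place. The one point I would want to verify carefully against \cite{HuaPickrell10} is the input used in the first paragraph — that the finite ensembles \eqref{inverseeig} are coherent under corner projection and arise as the corners of a single $\mathbb{U}(\infty)$-invariant probability measure on $\mathbb{H}(\infty)$. It is this, rather than the mere existence of each \eqref{inverseeig} in isolation, that allows $\{\mathsf{e}_i\}_{i\ge1}$ to be taken as a single infinite exchangeable sequence realising $\sum_{i=1}^N y_i\overset{d}{=}\sum_{i=1}^N\mathsf{e}_i$ for all $N$ at once, as opposed to producing, for each $N$ separately, merely some exchangeable $N$-tuple.
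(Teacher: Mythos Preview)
Your proposal is correct and is precisely the expansion of the paper's own proof, which simply cites \cite[Proposition 2.11]{assiotispainleve} and \cite{HuaPickrell10}: you construct $\{\mathsf{e}_i\}$ as the diagonal of the infinite inverse-Laguerre matrix from \cite{HuaPickrell10}, deduce exchangeability from $\mathbb{U}(\infty)$-invariance, the trace identity from corner-coherence, and the marginal from the $N=1$ case. The one caveat you flag---that the corner-coherence and existence of the $\mathbb{U}(\infty)$-invariant measure on $\mathbb{H}(\infty)$ must be quoted from \cite{HuaPickrell10}---is exactly the external input the paper also relies on.
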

\begin{prop}
Let $\nu > r-1\geq0$. Then, the sequence of random variables
\begin{equation}\label{InverseLaguerreSum}
    \left\{\bigg|\sum_{j=1}^N\frac{1}{Nx_j}\bigg|^r\right\}_{N\ge1}
\end{equation}
where $\left(x_1,x_2, \ldots, x_N\right)$ are distributed according to the measure in \eqref{LUEeig}, is uniformly integrable.
\begin{proof}
To show uniform integrability of $\left\{\bigg|\sum_{j=1}^N\frac{1}{Nx_j}\bigg|^r\right\}_{N\ge1}$ we simply show uniform boundedness of a higher moment:
for all $ r \in [1,\nu+1)$, there exists $k\in (r,\nu+1)$ so that, by Jensen's inequality: 
\begin{equation}
    \sup_{N\ge 1}\mathbb{E}\left(\bigg|\sum_{j=1}^N\frac{1}{Nx_j}\bigg|^k\right) \le 2^{-k}\mathbb{E}[|\mathsf{e_1}|^k] = \frac{\Gamma(\nu+1)}{2^{2k}\Gamma(\nu-k+1)} < \infty,
\end{equation}
where we have used Proposition \ref{prop:exchange} for both the bound and the equality. This implies that the sequence $\left\{\bigg|\sum_{j=1}^N\frac{1}{Nx_j}\bigg|^r\right\}_{N\ge1}$ is uniformly integrable for all $ r \in [1,\nu+1)$.
\end{proof}
\label{prop:uniformInt}
\end{prop}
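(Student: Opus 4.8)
The plan is to deduce uniform integrability from an $L^p$-bound for a single $p>1$, and to obtain that bound via the exchangeable representation of Proposition~\ref{prop:exchange} rather than by estimating the Laguerre eigenvalue integral \eqref{LUEeig} directly. Recall the standard criterion (a special case of de la Vall\'ee--Poussin's, with $G(x)=x^p$): a family $\{Z_N\}_{N\ge1}$ of integrable random variables is uniformly integrable whenever $\sup_{N\ge1}\mathbb{E}(|Z_N|^p)<\infty$ for some $p>1$. Applying this with $Z_N=\big|\sum_{j=1}^N\frac{1}{Nx_j}\big|^r$, it suffices to produce an exponent $k>r$ for which $\sup_{N\ge1}\mathbb{E}\big(\big|\sum_{j=1}^N\frac{1}{Nx_j}\big|^k\big)<\infty$, since then $\sup_N\mathbb{E}(|Z_N|^{k/r})<\infty$ with $k/r>1$. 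The hypothesis $\nu>r-1$ is precisely what ensures $r<\nu+1$, leaving room to choose $k\in(r,\nu+1)$, and then $k>r\ge1$.

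To estimate this $k$-th moment I would invoke Proposition~\ref{prop:exchange}, which for every $N$ gives the distributional identity $\sum_{j=1}^N\frac{2}{x_j}\overset{d}{=}\sum_{i=1}^N\mathsf{e}_i$, where $\{\mathsf{e}_i\}_{i\ge1}$ is an exchangeable sequence of positive random variables with the Inverse-Gamma one-dimensional marginal \eqref{eq:lawe}. Consequently $\sum_{j=1}^N\frac{1}{Nx_j}\overset{d}{=}\frac{1}{2N}\sum_{i=1}^N\mathsf{e}_i$, and since $x\mapsto x^k$ is convex on $[0,\infty)$ for $k\ge1$, Jensen's inequality yields
\begin{equation*}
\mathbb{E}\left(\left|\frac1N\sum_{i=1}^N\mathsf{e}_i\right|^k\right)\le\frac1N\sum_{i=1}^N\mathbb{E}\left(\mathsf{e}_i^k\right)=\mathbb{E}\left(\mathsf{e}_1^k\right),
\end{equation*}
where the last equality uses exchangeability. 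The whole question therefore reduces to the finiteness of the $k$-th moment of a single $\mathsf{e}_1$.

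Finally one computes $\mathbb{E}(\mathsf{e}_1^k)$ directly from \eqref{eq:lawe}: the change of variables $u=2/x$ converts $\int_0^\infty x^{k-\nu-2}e^{-2/x}\,dx$ into a constant multiple of $\int_0^\infty u^{\nu-k}e^{-u}\,du=\Gamma(\nu-k+1)$, which is finite exactly because $k<\nu+1$. Putting the pieces together gives $\sup_{N\ge1}\mathbb{E}\big(\big|\sum_{j=1}^N\frac{1}{Nx_j}\big|^k\big)<\infty$, hence the asserted uniform integrability.

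I do not expect a genuine obstacle here, given Proposition~\ref{prop:exchange}: the only delicate points are selecting the auxiliary exponent $k$ strictly between $r$ and $\nu+1$ --- which is exactly the slack afforded by $\nu>r-1$ --- and checking the elementary convergence of the Inverse-Gamma moment in that range. The real content is the passage to the exchangeable sequence; trying instead to bound $\mathbb{E}\big(\big|\sum_j\frac{1}{Nx_j}\big|^k\big)$ working directly with the LUE density \eqref{LUEeig} would be considerably more cumbersome.
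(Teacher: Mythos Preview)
Your proposal is correct and follows essentially the same route as the paper: pick an auxiliary exponent $k\in(r,\nu+1)$, use the exchangeable representation of Proposition~\ref{prop:exchange} to rewrite $\sum_j\frac{1}{Nx_j}$ as $\frac{1}{2N}\sum_i\mathsf{e}_i$, apply Jensen's inequality together with the identical marginals to bound the $k$-th moment by $2^{-k}\mathbb{E}(\mathsf{e}_1^k)$, and evaluate the latter as an Inverse-Gamma moment to conclude via the higher-moment criterion. Your write-up is in fact more explicit than the paper's in justifying each step (the de la Vall\'ee--Poussin criterion, the change of variables for the Gamma integral), but the underlying argument is the same.
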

Now, we are finally in a position to prove the main result of this section:
\begin{proof}[Proof of Theorem \ref{BesselThm}]
Using an argument similar to the proof of Proposition \ref{prop:analytic} we get the convergence of derivatives for $p=0,1,2,\ldots$:
\begin{equation}
    \frac{d^p\psi_{N}^{(\nu)}}{dt^p}\xrightarrow{N\rightarrow\infty} \frac{d^p\psi^{(\nu)}}{dt^p}
\end{equation}
for all $ t \in (0,\infty)$. Now, for $\nu>-1$, it was proven in \cite[Proposition 7.2]{HuaPickrell10} that $\mathbf{Y}(\nu)$ is finite almost surely. Moreover, it is clear from the definition of the eigenvalue density \eqref{LUEeig} that $\mathbb{P}\left( \sum_{j=1}^N \frac{1}{Nx_j}  <  1\right)$ is strictly positive for all $N$. Therefore, an application of Markov's inequality yields that there are constants $m_N^{(\nu)} > 0$, $m^{(\nu)}>0$ and $M^{(\nu)} > 0$ such that
\begin{align}
    \psi_N^{(\nu)}(t) \ge m_N^{(\nu)} e^{-t} > 0 \ \ \text{and} \ \  \psi^{(\nu)}(t) \ge m^{(\nu)}e^{-M^{(\nu)} t}>0, && \text{for \ all} \ t \in (0,  \infty).\label{eq:psiinequality}
\end{align}
Therefore, $\xi_N^{(\nu)}$ and its derivatives are well-defined and we can take the limits for $t \in (0,\infty), p = 0,1,2,\dots$:
\begin{equation}
        \frac{d^p}{dt^p}\xi_{N}^{(\nu)}(t)\xrightarrow{N\rightarrow\infty} \frac{d^p}{dt^p}\xi^{(\nu)}(t).
\end{equation}
Then, we simply take the limit as $N\rightarrow \infty$ of \eqref{eq:painlevexi} and substitute $h^{(\nu)}(t)=\frac{\nu^2}{4}+\xi^{(\nu)}(t)$ to get the desired Painlev\'e equation \eqref{eq:xiinfpainleve}. To show that $\xi^{(\nu)}(t)$ is $C^\omega$ on $(0,\infty)$, we simply use the fact that $\psi^{(\nu)}(t)>0$ on $(0,\infty)$ and apply the same sequence of arguments as in the proof of Theorem \ref{zetainftypainleve}.

For the boundary conditions, letting $\left(x_1,x_2, \ldots, x_N\right)$ be distributed according to the probability measure in \eqref{LUEeig},  we simply compute, using Proposition \ref{prop:exchange}: 
\begin{equation}
  \mathbb{E}\left(\sum_{j=1}^N\frac{1}{Nx_j}\right) =\mathbb{E}\left[\frac{\mathsf{e_1}}{2}\right] = \frac{1}{\nu}  
\end{equation}
for $\nu > 0$ and note that by Proposition \ref{prop:uniformInt} we have that
\begin{equation}
    \mathbb{E}\left(\left(\sum_{j=1}^N\frac{1}{Nx_j}\right)^2\right) < \infty,
\end{equation}
for $\nu > 1$. Hence, we get the boundary conditions:
\begin{equation}
    \begin{cases}
      \xi_N^{(\nu)}(0) = 0, & \text{for}  \ \nu>0,\label{eq:xiNbc1}\\
      & \\
   \left.\frac{d}{dt}\xi_N^{(\nu)}(t)\right|_{t=0} = -\frac{1}{\nu}, & \text{for} \ \nu>1, 
  
\end{cases}
\end{equation}
for all $N \ge 1$. Now, since by Proposition \ref{prop:uniformInt} we have that $\nu > r-1\geq 0$ implies that the sequence $\left\{\left|\sum_{j=1}^N\frac{1}{Nx_j}\right|^r\right\}_{N\ge1}$ is uniformly integrable, arguing as in the proof of Theorem \ref{zetainftypainleve} and using \eqref{eq:psiinequality} we establish:
\begin{equation}
 \frac{d^p}{dt^p}\xi_{N}^{(\nu)}(t)\xrightarrow{N\rightarrow\infty} \frac{d^p}{dt^p}\xi^{(\nu)}(t)
\end{equation}
for $t \in [0, \infty)$, $p=0$ when $\nu>0$ and $t \in [0, \infty)$, $p\in \{0,1\}$ when $\nu>1$. Thus, the desired boundary conditions are obtained by taking the limit $N \to \infty$ in \eqref{eq:xiNbc1}.
\end{proof}

\end{section}

\bigskip
\noindent
{\sc School of Mathematics, University of Edinburgh, James Clerk Maxwell Building, Peter Guthrie Tait Rd, Edinburgh EH9 3FD, U.K.}\newline
\href{mailto:theo.assiotis@ed.ac.uk}{\small theo.assiotis@ed.ac.uk}

\noindent
{\sc Mathematical Institute, Andrew Wiles Building, University of Oxford, Radcliffe
Observatory Quarter, Woodstock Road, Oxford, OX2 6GG, UK.}\newline
\href{mailto:benjamin.bedert@sjc.ox.ac.uk}{\small benjamin.bedert@sjc.ox.ac.uk}

\noindent
{\sc Mathematical Institute, Andrew Wiles Building, University of Oxford, Radcliffe
Observatory Quarter, Woodstock Road, Oxford, OX2 6GG, UK.}\newline
\href{mailto:mustafa.gunes@st-hildas.ox.ac.uk}{\small mustafa.gunes@st-hildas.ox.ac.uk}

\noindent
{\sc Mathematical Institute, Andrew Wiles Building, University of Oxford, Radcliffe
Observatory Quarter, Woodstock Road, Oxford, OX2 6GG, UK.}\newline
\href{mailto:arun.soor@sjc.ox.ac.uk}{\small arun.soor@sjc.ox.ac.uk}

\end{document}